\documentclass[12pt,a4paper,reqno]{amsart}
\usepackage[top=1.15in, bottom=1.15in, left=1.17in, right=1.17in]{geometry}
\usepackage[colorlinks=true, pdfstartview=FitV, linkcolor=blue, citecolor=blue, urlcolor=blue, breaklinks=true]{hyperref}

\usepackage{amssymb}
\usepackage{mathrsfs}
\usepackage{todonotes}
\usepackage{amscd}
\usepackage{comment}
\usepackage{dsfont}
\usepackage{fancyhdr, color,tikz-cd}
\usepackage[backend=bibtex,style=alphabetic,doi=true,isbn=false,url=false,giveninits=true,maxbibnames=50]{biblatex}
\addbibresource{main.bib}


\newcommand{\bd}{\mathbf{d}}

\newcommand{\be}{\mathbf{e}}
\newcommand{\bc}{\mathbf{c}}

\newcommand{\g}{\mathfrak{g}}

\newcommand{\ve}{\varepsilon}
\newcommand{\vp}{\varphi}

\newcommand{\wt}{\widetilde}

\DeclareMathOperator{\supp}{supp}
\DeclareMathOperator{\im}{Im}

\newcommand{\n}{\mathfrak{n}}
\newcommand{\h}{\mathfrak{h}}

\numberwithin{equation}{section}

\newtheorem{theorem}{Theorem}[section]
\newtheorem*{thm*}{Theorem}
\newtheorem{corollary}[theorem]{Corollary}
\newtheorem{lemma}[theorem]{Lemma}
\newtheorem{proposition}[theorem]{Proposition}
\newtheorem{conjecture}[theorem]{Conjecture}

\newtheorem{definition}[theorem]{Definition}
\newtheorem{remark}[theorem]{Remark}

\title{Dynkin abelianisations of flag varieties}

\author[Enugandla, Fang, Fourier and Steinert]{Shreepranav Varma Enugandla, Xin Fang, Ghislain Fourier and Christian Steinert }
\address{Shreepranav Varma Enugandla:\newline
University of California, Berkeley, United States of America}
\email{shreepranav@gmail.com}
\address{Xin Fang:\newline
RWTH Aachen University, Chair of Algebra and Representation Theory, Aachen, Germany}
\email{xinfang.math@gmail.com}
\address{Ghislain Fourier:\newline
RWTH Aachen University, Chair of Algebra and Representation Theory, Aachen, Germany}
\email{fourier@art.rwth-aachen.de}
\address{Christian Steinert:\newline
RWTH Aachen University, Chair of Algebra and Representation Theory, Aachen, Germany}
\email{steinert@art.rwth-aachen.de}

\begin{document}

\begin{abstract}
Cerulli Irelli and Lanini have shown that PBW degenerations of flag varieties in type $\tt A$ and $\tt C$ are actually Schubert varieties of higher rank. We introduce Dynkin cones to parameterise specific abelianisations of classical Lie algebras. Within this framework, we generalise their result to all degenerations of flag varieties defined by degree vectors originating from a Dynkin cone. This framework allows us to determine the extent to which a flag variety can be degenerate while still naturally being a Schubert variety of the same Lie type. Furthermore, we compute the defining relations for the corresponding degenerate simple modules in all classical types.
\end{abstract}
\maketitle
\section{Introduction}

Let $\g$ be a finite-dimensional complex simple Lie algebra of classical type with a fixed triangular decomposition $\g = \n_+ \oplus \h \oplus \n_-$. The connected simply-connected semi-simple algebraic group associated with $\g$ is denoted by $G$. For a parabolic subgroup $P$ of $G$, the projective variety $G/P$ is called a \emph{partial flag variety}. The classical (type $\tt A$) flag variety occurs in the special case where $G=\mathrm{SL}_n(\mathbb{C})$ and $P$ is the standard Borel subgroup of $G$.

The natural Poincaré-Birkhoff-Witt (PBW) filtration on the universal enveloping algebra of a Lie algebra provides an algebra filtration on $U(\n_-)$ and, by extension, a filtration on any finite-dimensional simple $\g$-module $V$ when a highest weight generator is fixed. The induced associated graded module $V^a$ becomes a cyclic module for the abelian Lie algebra $\n_-^a$ over the underlying vector space $\n_-$. The PBW filtration on $U(\n_-)$ is stable under the actions of $\mathfrak{b}:=\n_+\oplus \h$, and this stability leads to an action of $\g^a:=\mathfrak{b} \ltimes \n_-^a$ on $V^a$. These PBW degenerate modules have been studied in \cite{FFL11a, FFL11b} for $\g$ of type $\tt A$ and $\tt C$.

Consequently, \cite{Fei12} translated this degeneration in type $\tt A$ into a geometric context, by defining the PBW degenerate flag variety as the closure of the action of the algebraic group $G^a$ corresponding to $\g^a$ on the highest weight line in $\mathbb{P}(V)$. The PBW degenerate flag variety is a flat degeneration of $\mathrm{SL}_n/B$. In subsequent paper \cite{Fei11}, a realisation of type $\tt A$ in terms of quiver Grassmannians was given. Here, the defining quiver representation comprises a sequence of projections, in contrast to the identity maps used for the classical flag variety. Similarly, \cite{FFL14} considered the PBW degenerate symplectic flag variety.

In type $\tt A$, this realisation in terms of quiver Grassmannians, as used implicitly in \cite{CIL15}, demonstrates that the PBW degenerate flag variety is in fact a Schubert variety within a partial flag variety of the same Lie type but of higher rank. Similar results in type $\tt C$ are obtained using a folding procedure. They explicitly provided the Weyl group element defining the Schubert variety. In \cite{CLL}, similar results were rediscovered using a different construction of embedding $\g^a$ into matrices of larger sizes. It is shown that the PBW degenerate modules are Demazure modules of a Lie algebra of higher rank, and their defining relations can be naturally obtained from those of the Demazure modules.

The purpose of this project is to generalise the above results in aiming to determine, for all Lie algebras of classical type, the extent to which one can degenerate a flag variety and still naturally obtain a Schubert variety of the same Lie type (but of larger rank). We introduce a polyhedral cone $\mathcal{D}(\mathfrak g)$, describing the degrees one can associate with negative roots, resulting in an associated graded Lie algebra that is a partial abelianisation of $\n_-$. For $\bd\in\mathcal{D}(\g)$ we let $\n_-^\bd$ denote this partially abelianised Lie algebra. Some of such abelianisations of $\n_-$ are uniquely determined by fixing a set of pairs of laced simple roots and ensuring that their root vectors commute. One can interpret this as effectively stretching the Dynkin diagram and inserting a new node between each pair of laced simple roots; hence, we call them \emph{Dynkin abelianisations}. Denoting by $\mathbf{c}$ such a set of pairs, we introduce the corresponding \emph{Dynkin cone} $F^{\mathbf{c}} \subseteq \mathcal{D}(\mathfrak g)$, which enforces the required property while also incorporating certain additional equalities, which we call the differential operators equalities. For $\mathbf{d}$ in the relative interior of the Dynkin cone $F^{\mathbf{c}}$, we denote the associated graded module of a finite-dimensional simple $\g$-module by $V^{\mathbf{d}}$.

The main result of the paper is the following, see Theorem \ref{Thm:IsoModule} for a precise statement.

\begin{thm*}
Let $\g$ be of classical type $\tt X_n$, $\wt{\g}$ of type $\tt X_{n+ \#\mathbf{c}}$, and $V$ a finite-dimensional simple $\g$-module. There exists an element $w$ in the Weyl group of $\wt\g$ such that $V^{\mathbf{d}}$ is isomorphic to a Demazure module of $\wt\g$ associated to $w$ as modules for $\n_-^\bd$.
\end{thm*}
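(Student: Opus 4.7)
The plan is to realise the degenerate module $V^{\bd}$ explicitly as a Demazure submodule inside an irreducible module for the larger Lie algebra $\wt\g$, exhibiting an isomorphism of $\n_-^{\bd}$-modules through an embedding of the abelianised nilpotent subalgebra into $\wt\n_-$. First, I would exploit the combinatorics of the Dynkin cone $F^{\bc}$ to construct an injective linear map $\varphi:\n_-^{\bd}\hookrightarrow\wt\n_-$ which sends each root vector $f_\beta$ to a negative root vector in $\wt\g$ whose support additionally traverses the inserted nodes. Concretely, for every pair $(\alpha_i,\alpha_j)\in\bc$ of laced simple roots, the new simple root $\wt\beta$ inserted between $\wt\alpha_i$ and $\wt\alpha_j$ ensures that any pair of images $\varphi(f_{\beta_1}),\varphi(f_{\beta_2})$ whose original brackets were killed by $\bd\in F^{\bc}$ now lies in a subspace on which $[\cdot,\cdot]$ vanishes in $\wt\n_-$ itself, because the corresponding root sums fail to be roots of $\wt\g$. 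Verifying this Lie-algebraic compatibility on the nose, in all four classical types, is the first technical task; the differential operator equalities encoded in $F^{\bc}$ should be precisely what is needed to make $\varphi$ well-defined.

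Next, I would identify the candidate pair $(w,\mu)$ where $\mu$ is a dominant integral weight of $\wt\g$ lifting the highest weight $\la$ of $V$ (with prescribed values on the inserted fundamental coweights) and $w\in W(\wt\g)$ is an explicit Weyl group element. The natural guess for $w$ is built from a reduced expression that records, in the correct PBW order, the simple reflections corresponding both to the original simple roots of $\g$ and to the newly inserted ones; the length of $w$ should match $\dim V^{\bd}-1$ counted with the appropriate weight-space multiplicities, and the shape of the reduced word should mirror the stratification of $\n_-^{\bd}$ arising from $\bd$.

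With $(w,\mu)$ in place, I would produce a candidate module homomorphism $V^{\bd}\to \wt V_w(\mu)$ sending the cyclic generator $v_\la$ of $V^{\bd}$ to the highest weight vector $v_\mu$. Via $\varphi$, this map is $\n_-^{\bd}$-equivariant by construction. Surjectivity follows from cyclicity of Demazure modules under $\wt\n_-$, together with the fact that the PBW monomials from $\varphi(\n_-^{\bd})$ already exhaust $\wt V_w(\mu)$ given the choice of $w$. For injectivity I would invoke the defining relations of $V^{\bd}$ computed elsewhere in the paper and check that these same relations hold in $\wt V_w(\mu)$; the key input is that Demazure modules satisfy the defining relations of their highest-weight Demazure crystal, which restrict to the abelianised relations under $\varphi$.

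The principal obstacle I expect is the last step, namely matching defining relations. For types $\tt A$ and $\tt C$ the prior work in \cite{FFL11a,FFL11b,CIL15} provides a template, but for types $\tt B$ and $\tt D$ the orthogonal Plücker-type relations are genuinely more intricate, and one may need to perform a careful reduction using the Demazure character formula or the theory of global bases to cross-check dimensions weight-space by weight-space. A secondary obstacle is pinning down the reduced expression for $w$ uniformly across the four classical families; I would likely treat each type separately, reading off $w$ from the explicit Dynkin-diagram stretching prescribed by $\bc$, and then unify the statements at the end.
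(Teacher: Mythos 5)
Your setup matches the paper's: embed $\n_-^\bd$ as a Lie subalgebra of $\wt\n_-$ via a root-to-root map, choose a dominant lift $\mu$ of $\lambda$ and an explicit Weyl group element $w$, and compare $V^\bd$ with $\wt V_w(\mu)$. But the proposed proof is circular at the crucial step. You want to ``invoke the defining relations of $V^\bd$ computed elsewhere in the paper'' to show your candidate map $V^\bd\to\wt V_w(\mu)$ is well-defined (your ``injectivity'' step is really well-definedness of a map out of $U(\n_-^\bd)/\mathrm{Ann}(v_\lambda^\bd)$). For general classical types and general degree vectors in a Dynkin cone, the defining relations of $V^\bd$ are exactly a \emph{consequence} of the theorem, not a prior input. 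The FFL relations you allude to cover only the PBW degeneration in types $\tt A$ and $\tt C$; nothing comparable is available for types $\tt B$, $\tt D$, nor for Dynkin abelianisations that are not the full PBW one. Without an independent handle on $\mathrm{Ann}_{U(\n_-^\bd)}(v_\lambda^\bd)$, the map cannot be constructed.

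The paper escapes this circle by working in the opposite direction first. Joseph's description of the annihilator of a Demazure module gives explicit generators $e_{\wt\gamma}^{-(w(\mu),\wt\gamma^\vee)+1}$, and a differential-operator argument (this is where the (DO) equalities of the Dynkin cone actually bite, not in the construction of $\varphi$ as you suggest) pushes these generators through the Lie algebra isomorphism into $\mathrm{Ann}(v_\lambda^\bd)$, yielding a surjection $\wt V_w(\Psi(\lambda))\twoheadrightarrow V^\bd(\lambda)$. The reverse surjection is obtained not by relation-matching but by embedding $V^\bd(\lambda)$ as the Cartan component of a tensor product of fundamental $V^\bd(\varpi_k)$'s, matching it with the Cartan component on the Demazure side via standard monomial theory, and then reducing to the fundamental weights, where one proves $\dim\wt V_{w}(\Psi(\varpi_k))\le\dim V^\bd(\varpi_k)$ by explicit dimension counting (triangular elements and marked-chain polytopes in types $\tt A$, $\tt C$; extremal weight vectors in exterior and spin representations in types $\tt B$, $\tt D$). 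Your proposal has none of these ingredients; without them the argument does not close.

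Two smaller points. The length of $w$ equals $\#\Phi^+(\g)$, not $\dim V^\bd-1$. And your surjectivity claim (``cyclicity of Demazure modules under $\wt\n_-$'') is not free: $\wt V_w(\mu)$ is cyclic under $\wt\b_+$, and one must separately prove, via an inversion-set computation $\wt\Phi_w^+=\wt\Phi_\bc^+$, that after twisting by $w$ it becomes cyclic under exactly the image of $\varphi$.
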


The $\bd$-degenerate flag variety is defined similarly to the degenerate flag variety as a highest weight orbit in $\mathbb{P}(V^\bd)$ of the action of an algebraic group associated to $\n_-^\bd$. Consequently, we proved that

\begin{thm*}[Corollary \ref{Cor:IsoProj}]
For $\mathbf{d}$ in the relative interior of $F^{\mathbf{c}}$, the corresponding $\bd$-degenerate flag variety is isomorphic to a Schubert variety in a partial flag variety of higher rank.
\end{thm*}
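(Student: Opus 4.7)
The plan is to transfer the module isomorphism of Theorem~\ref{Thm:IsoModule} into a projective-geometric identification of orbit closures, and then recognise the resulting orbit closure as a Schubert variety via its standard Plücker embedding.

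First, I would fix an $\n_-^\bd$-equivariant isomorphism $\phi : V^\bd \xrightarrow{\sim} D_w$ provided by the main theorem, with $D_w$ viewed as a $\wt{\b}$-submodule of the irreducible $\wt\g$-module $V_\la(\wt\g)$. Both sides are cyclic $\n_-^\bd$-modules: the source is generated by the highest-weight vector $v_\la^\bd$, and the target by the extremal weight vector $v_{w\la}$, which generates $D_w$ as a $\wt{\b}$-module and hence also under the restricted $\n_-^\bd$-action coming from the embedding $\n_-^\bd \hookrightarrow \wt\g$ underlying the theorem. Thus $\phi$ sends $v_\la^\bd$ to a nonzero scalar multiple of $v_{w\la}$, and projectivising yields a closed embedding $\overline\phi : \mathbb{P}(V^\bd) \hookrightarrow \mathbb{P}(V_\la(\wt\g))$ mapping $[v_\la^\bd]$ to $[v_{w\la}]$.

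Second, I would promote this linear isomorphism to one of algebraic-group orbits. Exponentiating the embedding $\n_-^\bd \hookrightarrow \wt\g$ produces a closed immersion of unipotent groups $N^\bd := \exp(\n_-^\bd) \hookrightarrow \wt G$, and the $N^\bd$-actions on $V^\bd$ and on $D_w$ are intertwined by $\phi$. Hence $\overline\phi$ restricts to an isomorphism $\mathcal{F}^\bd = \overline{N^\bd \cdot [v_\la^\bd]} \xrightarrow{\sim} \overline{N^\bd \cdot [v_{w\la}]}$ inside $\mathbb{P}(V_\la(\wt\g))$. Under the Plücker embedding $\wt G/\wt P_\la \hookrightarrow \mathbb{P}(V_\la(\wt\g))$, with $\wt P_\la$ the parabolic stabilising $[v_\la]$, the Schubert variety $X_w$ is realised as $\overline{\wt B \cdot [v_{w\la}]}$. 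Provided the embedding $\n_-^\bd \hookrightarrow \wt\g$ lands inside the Borel subalgebra $\wt{\b}$, which is precisely the structural input encoded in the construction of $w$ from $\bd$, one has $N^\bd \subseteq \wt B$ and therefore $\overline{N^\bd \cdot [v_{w\la}]} \subseteq X_w$.

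The equality of these closures, which completes the proof, reduces to a dimension count: the differential of the orbit map $N^\bd \to D_w$, $g \mapsto g \cdot v_{w\la}$, has rank equal to $\dim \n_-^\bd$ minus the dimension of the annihilator of $v_{w\la}$ in $\n_-^\bd$, which by the module isomorphism $\phi$ matches $\ell(w) = \dim X_w$. Thus $N^\bd \cdot [v_{w\la}]$ is an open subset of $X_w$, and the closures coincide. The principal obstacle is verifying the two structural inputs feeding into this argument: that the embedding $\n_-^\bd \hookrightarrow \wt\g$ from Theorem~\ref{Thm:IsoModule} genuinely lands in a Borel subalgebra of $\wt\g$, and that the annihilator of $v_{w\la}$ in $\n_-^\bd$ has the correct codimension. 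Both assertions should be read off mechanically from the explicit correspondence between $\bd$, the Weyl group element $w$, and the Demazure module furnished by the proof of the main theorem.
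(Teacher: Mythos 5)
Your proposal is essentially the same argument as the paper's: identify the module isomorphism of Theorem~\ref{Thm:IsoModule}, projectivise it, and identify the orbit of the small unipotent group $N^\bd_-$ with the Schubert cell. The one place where the paper is cleaner is the final step: you close with a dimension count (orbit of $N^\bd$ inside $X_w$ has the right dimension, hence the closures agree), but the paper shows directly that the two open cells coincide, using that any root subgroup $U_{\wt\gamma}$ with $\wt\gamma\in\wt\Phi^+\setminus\wt\Phi^+_{w_\bd^{-1}}$ acts trivially on the extremal vector, so that $\wt B\cdot v_{w_\bd(\lambda')}=\prod_{\wt\gamma\in\wt\Phi^+_{w_\bd^{-1}}}U_{\wt\gamma}\cdot v_{w_\bd(\lambda')}=w_\bd\prod_{\wt\beta\in\wt\Phi^+_{w_\bd}}U_{-\wt\beta}\cdot v_{\lambda'}$. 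Since $w_\bd(-\wt\Phi^+_\bc)=\wt\Phi^+_{w_\bd^{-1}}$, the latter is precisely the orbit of the $w_\bd$-conjugate of $\exp(\wt\n^\bc_-)\cong N^\bd_-$; no separate rank computation is needed. Two small cautions for your write-up: the embedding $\sigma_\bd\colon\n_-^\bd\hookrightarrow\wt\g$ supplied by Proposition~\ref{Prop:LAIso} lands in $\wt\n_-$, and it is only after conjugating by $w_\bd$ that the image sits inside $\wt\n_+\subset\wt\b$, so when you say "the embedding lands in the Borel" you should specify that you mean this twist; and $\dim X_{w_\bd}(\Psi(\lambda))$ equals the length of the minimal coset representative of $w_\bd$ modulo the stabiliser of $\Psi(\lambda)$, not $\ell(w_\bd)$ in general (this does not affect the validity of the argument since the orbit-map rank and the Schubert-cell dimension match anyway, but the identification with $\ell(w)$ as stated is imprecise).
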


A few comments should be made here about the proof. We are following the strategy of \cite{CLL}, using the embedding trick. We initially prove the isomorphism for the modules corresponding to fundamental weights. Then, we utilise the fact that any simple, finite-dimensional module can be embedded in a tensor product of fundamental modules. This embedding leads to a surjective map from the degenerate module to the Demazure module. Using the defining ideal for Demazure modules \cite{Jos}, we then show that there is a surjective map from the Demazure module to the degenerate module, thus establishing the necessary isomorphism.

Our proof yields further insights beyond the initial results. A key challenge lies in describing the defining ideal of $V^{\mathbf{d}}$. This challenge has been addressed for the PBW degenerate module in types $\tt A$ and $\tt C$ in \cite{FFL11a, FFL11b}, and for type $\tt A$ generally in \cite{CFFFR17}. With our identification with the Demazure module, we conclude for all classical types:

\begin{thm*}
The defining ideal of $V^{\mathbf{d}}$ is generated by $\mathfrak{b}$ and the set $U(\n_+)\cdot f_{\alpha}^{\ell_\alpha}$, where $f_\alpha$ are images of the root vectors in $\n^{\mathbf{d}}$ and $\ell_\alpha$ are appropriate powers.
\end{thm*}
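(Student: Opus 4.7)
My plan is to transport the description of the defining ideal of the Demazure module from \cite{Jos} across the isomorphism $V^{\mathbf{d}} \cong D_w(\wt{\la})$ furnished by Theorem \ref{Thm:IsoModule}. As an $\n_-^{\mathbf{d}}$-module, $V^{\mathbf{d}}$ is identified with a cyclic highest-weight quotient whose relations are fully understood in $U(\wt{\n}_-)$, so the task reduces to checking that these known relations, when pulled back to root vectors of $\n_-^{\mathbf{d}}$, generate exactly the asserted ideal, together with the $\mathfrak{b}$-part coming from the highest-weight condition.

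The first concrete step is to recall Joseph's result: for $D_w(\wt{\la})$ with cyclic vector $v_{w\wt{\la}}$, the annihilator in $U(\wt{\n}_-)$ is generated by the $U(\wt{\n}_+)$-orbit of the elements $\wt{f}_\beta^{n_\beta}$ with $n_\beta = 1+\max\{0,-\langle w\wt{\la},\beta^\vee\rangle\}$, as $\beta$ ranges over the positive roots of $\wt{\g}$. The Lie algebra embedding $\n_-^{\mathbf{d}} \hookrightarrow \wt{\n}_-$ used in the identification sends each negative root vector $f_\alpha$ to some $\wt{f}_{\beta(\alpha)}$; setting $\ell_\alpha := n_{\beta(\alpha)}$ gives precisely the powers in the statement and shows that the proposed ideal sits inside the defining ideal of $V^{\mathbf{d}}$. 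For the reverse inclusion, I would introduce the cyclic $U(\g^{\mathbf{d}})$-module $M$ defined as the quotient by the left ideal generated by $\mathfrak{b}$ and $U(\n_+) \cdot f_\alpha^{\ell_\alpha}$. There is a canonical surjection $M \twoheadrightarrow V^{\mathbf{d}}$, and in order to upgrade it to an isomorphism I need to lift Joseph's presentation to a surjection $D_w(\wt{\la}) \twoheadrightarrow M$. Concretely, each Joseph relation $\wt{f}_\beta^{n_\beta} \cdot v_{w\wt{\la}}$ that lands in $U(\n_-^{\mathbf{d}})$ after restriction must be shown to vanish in $M$; for $\beta = \beta(\alpha)$ this is present by construction, so only the ``extra'' Joseph generators, coming from positive roots of $\wt{\g}$ that do not lie in the image of $\alpha \mapsto \beta(\alpha)$ or arising from the $U(\wt{\n}_+)$-orbit outside $U(\n_+)$, require work.

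The main obstacle is precisely this reduction. The extra roots correspond to the inserted nodes in the stretched Dynkin diagram, and the associated $\wt{f}_\beta$ are expressible as iterated commutators of images of root vectors coming from $\g$. In the Dynkin abelianisation many of these commutators vanish, so the corresponding Joseph relations collapse: either $\wt{f}_\beta$ is already annihilated modulo the $\mathfrak{b}$-generators after pullback, or the relation $\wt{f}_\beta^{n_\beta}\cdot v_{w\wt{\la}}=0$ follows from one of the $f_\alpha^{\ell_\alpha}$-relations through the commutation structure of $\n_-^{\mathbf{d}}$ combined with the $\n_+$-action inherited from $\g$. Equivalently, one must verify that the intersection of Joseph's ideal with $U(\n_-^{\mathbf{d}}) \subseteq U(\wt{\n}_-)$ is generated by $U(\n_+) \cdot f_\alpha^{\ell_\alpha}$, which amounts to a compatibility between the larger $U(\wt{\n}_+)$-closure and the smaller $U(\n_+)$-closure. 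Executing this reduction requires a type-by-type inspection using the explicit Weyl element $w$ from Theorem \ref{Thm:IsoModule} and the combinatorial data of the Dynkin cone $F^{\mathbf{c}}$, together with the PBW basis of $U(\n_-^{\mathbf{d}})$; once this is done in each classical type, the result follows.
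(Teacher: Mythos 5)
Your overall plan is the one the paper uses: transport Joseph's generators of the Demazure annihilator across the isomorphism of Theorem~\ref{Thm:IsoModule}, then show that the $U(\wt\n_+)$-closure of the pulled-back generators can be replaced by a $U(\n_+)$-closure. But the way you describe the core difficulty, and the mechanism you propose to resolve it, is incorrect, and this is where the real content of the argument lives.

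After twisting by $w_\bd$ (as the paper does in the display preceding \eqref{Eq:Gen}), the only nontrivial Joseph generators that survive are powers $f_{\wt\beta}^{(\lambda',\wt\beta^\vee)+1}$ with $\wt\beta\in\wt\Phi_\bc^+$, acted on by $U(\wt\n_+^\bc)$, where $\wt\n_+^\bc$ is spanned by root vectors with weight in $\wt\Phi^+\setminus\wt\Phi_\bc^+$. There are \emph{no} extra generators of the form $\wt f_\gamma^{n_\gamma}$ with $\gamma\notin\wt\Phi_\bc^+$: for such $\gamma$ the twisted Joseph generator is again a positive root vector and annihilates $v_{\lambda'}$ trivially, so the list of generating $f$-powers is already in bijection with $\Phi^+$ and the exponents are the asserted $\ell_\alpha$. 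You have therefore correctly shrunk the problem to the $U(\wt\n_+^\bc)$-orbit versus the $U(\n_+)$-orbit, but your discussion of ``extra Joseph relations $\wt f_\beta^{n_\beta}\cdot v_{w\wt\lambda}=0$ that must collapse'' is fighting a phantom.

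The genuine gap is in the proposed resolution. You claim that the root vectors attached to the inserted nodes ``are expressible as iterated commutators of images of root vectors coming from $\g$,'' and that the relations collapse because ``many of these commutators vanish.'' This is false: by Lemma~\ref{Lem:Closed}, $\wt\Phi_\bc^+$ is closed under sums inside $\wt\Phi^+$, so $\wt\n_-^\bc=\sigma_\bd(\n_-^\bd)$ is a Lie subalgebra and the extra root vectors are precisely the ones \emph{not} obtainable as brackets of images; no iterated commutator of images ever produces them. The paper's actual mechanism is different: one defines a partial section $\pi_\bc:\wt\Phi^+\to Q_+=\mathbb{N}\Phi^+$ (Lemma~\ref{Lem:Trio}) and for each $e_{\wt\gamma}\in\wt\n_+^\bc$ attaches a monomial $e_{\pi_\bc(\wt\gamma)}\in U(\n_+)$ — a single root vector, or a square in type~$\tt B$, or a product of two root vectors in type~$\tt D$ when $\pi_\bc(\wt\gamma)$ fails to be a root. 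Lemma~\ref{Lem:main} then shows, after choosing an order compatible with the derivation action and a degree count using the (PA) and (DO) conditions on $F^\bc$, that acting by $e_{\wt\gamma_1}\cdots e_{\wt\gamma_m}$ on $f_{\wt\beta}^k\cdot v_{\lambda'}$ matches acting by $e_{\gamma_1}\cdots e_{\gamma_m}$ on $\sigma_\bd^{-1}(f_{\wt\beta}^k)\cdot v_\lambda^\bd$ modulo lower-degree terms (which vanish in the associated graded). Without this explicit matching — including the subtlety that $\pi_\bc$ is not root-to-root in types $\tt B$ and $\tt D$ — your argument cannot be made to close.
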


This concept may be familiar to the reader, as the defining relations of the finite-dimensional  simple $\g$-modules are similar.

From identifying the $\bd$-degenerate flag variety with the Schubert varieties, we deduce that the $\bd$-degenerate varieties are normal, Cohen-Macaulay, and have rational singularities. We conjecture that this statement holds even if $\mathbf{d}$ is not in the relative interior, although our current methods do not extend to this scenario.

Describing monomial bases of the PBW degenerate modules is quite challenging, and this task has been accomplished for types $\tt A, C$ (\cite{FFL11a, FFL11b}), the results are known as FFLV-bases, and for type $\tt B$ for almost abelian degenerations (\cite{Makh}). The combinatorial aspects of our results, along with those by \cite{CLL}, relate these bases in types $\tt A$ and $\tt C$ to the so-called string bases (\cite{Lit98, BZ01}) as shown explicitly in \cite{CFL24}. One future direction of our work is to adapt the string bases of the Demazure modules in type $\tt D$ into FFLV-type bases.

Due to their complexity, we are restricting ourselves to classical types here. The exceptional types have been discussed to some extent by the first author in \cite{Enu22}. Our construction, which involves degenerations by stretching the Dynkin diagram, can also extend to cases where the stretched diagram is not of finite type. In these instances, one obtains Demazure modules for affine types, though the combinatorics of the proofs still need to be worked out. Even beyond this, if the resulting stretched diagram extends beyond affine type, one must then explore Demazure modules for hyperbolic Kac-Moody algebras.

The paper is structured as follows: in Section~\ref{sec:abelian}, we describe the cone of abelianisations; in Section~\ref{sec:dynkin}, we introduce the Dynkin cone and state the main result. Section~\ref{sec:reduction} and Section~\ref{Sec:LemmaProof} reduce the proof of the main theorem to fundamental modules, which are then analysed in detail in Section~\ref{Sec:Fund}.
\vskip 10pt
\noindent
\textbf{Acknowledgements:} The work of GF is funded by the Deutsche
Forschungsgemeinschaft (DFG, German Research Foundation): “Symbolic Tools in
Mathematics and their Application” (TRR 195, project-ID 286237555).

\section{Partial abelianisations of Lie algebras and representations}\label{sec:abelian}

Throughout the entire paper, $[n]=\{1,2,\ldots,n\}$.

Let $\g$ be a finite dimensional complex simple Lie algebra of rank $n$ and fix a triangular decomposition $\g = \n_+ \oplus \h \oplus \n_-$ of $\g$. Let $\Phi$ be the root system of $\g$, $\Phi^+$ the set of positive roots in $\Phi$ and $\{\alpha_1,\ldots,\alpha_n\}$ the set of simple roots in $\Phi^+$. The coroot of a root $\beta\in\Phi$ is defined as $\beta^\vee:=2\beta/(\beta,\beta)$. The Weyl group is denoted by $W$ and the simple reflections $\{ s_1, \ldots, s_n\}$. Let $\Lambda^+$ denote the set of dominant integral weights and $\varpi_1,\ldots,\varpi_n$ denote the fundamental weights. 

We fix a Chevalley basis of $\g$ \cite[Section 25]{Hum}, which is a weighted basis 
$$\{e_\beta,f_\beta,h_i\mid \beta\in\Phi^+,\ \ 1\leq i\leq n\}$$
where $e_\beta$ (resp. $f_\beta$) is of weight $\beta$ (resp. $-\beta$). Results in this section do not depend on the choice of a weight basis of $\g$. Nevertheless, for the main results, the existence of a Chevalley basis is used in the proofs.

For $\lambda \in \Lambda^+$, we denote by $V(\lambda)$ the finite-dimensional simple $\g$-module of highest weight $\lambda$, $v_\lambda$ denotes a generator of the highest weight space. Further, for $w \in W$, we denote the Demazure module $V_w(\lambda):= U(\mathfrak{b})\cdot v_{w(\lambda)}$, where $v_{w(\lambda)} \in V(\lambda)$ is a generator of the one-dimensional weight space of weight $w(\lambda)$.

For a positive root $\beta$, we define its support by 
$$\mathrm{supp}(\beta):=\{i\mid i\in[n]\text{ with }(\varpi_i,\beta^\vee)\neq 0\}.$$

\subsection{Cone of partial abelianisations}

We define a polyhedral cone $\mathcal{D}(\mathfrak{g})\subseteq\mathbb{R}^{\Phi^+}$ by the following inequalities: a point $\mathbf{d}=(d_\beta)_{\beta\in\Phi^+}\in\mathbb{R}^{\Phi^+}$ is contained in $\mathcal{D}(\mathfrak{g})$, if for any pair of positive roots $\beta_1,\beta_2\in\Phi^+$ such that $\beta_1+\beta_2\in\Phi^+$, 
\begin{equation}\label{Eq:Cone}
d_{\beta_1}+d_{\beta_2}\geq d_{\beta_1+\beta_2}.
\end{equation}

The cone $\mathcal{D}(\g)$ will be called the \emph{cone of partial abelianisations} of $\g$, for reasons which will become clear later. The following proposition provides polyhedral geometric properties of the cone $\mathcal{D}(\g)$.

\begin{proposition}\label{Prop:Cone}
\begin{enumerate}
\item The polyhedral cone $\mathcal{D}(\mathfrak{g})$ is full dimensional in $\mathbb{R}^{\Phi^+}$.
\item The defining inequalities in \eqref{Eq:Cone} are non-redundant, that is to say, they give the facet description of $\mathcal{D}(\mathfrak{g})$.
\end{enumerate}
\end{proposition}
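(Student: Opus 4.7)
The plan is to establish both parts via direct, explicit constructions.

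For (1), I would exhibit a point in the topological interior of $\mathcal{D}(\g)$; full-dimensionality then follows since a polyhedral cone with non-empty interior must be full-dimensional. The constant vector $\bd = (1,\ldots,1) \in \R^{\Phi^+}$ does the job: every inequality \eqref{Eq:Cone} reads $1+1 > 1$ strictly, so the whole set of strict satisfiers is open and nonempty.

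For (2), the plan is to show that for each pair of positive roots $\gamma_1, \gamma_2$ with $\gamma := \gamma_1+\gamma_2 \in \Phi^+$, the corresponding inequality is not a consequence of the others; combined with (1), non-redundancy is equivalent to being facet-defining. My proposed witness is
\[
d_\beta \;:=\; \begin{cases} \tfrac{1}{4}, & \beta \in \{\gamma_1, \gamma_2\},\\ 1, & \text{otherwise,} \end{cases}
\]
which violates only the inequality at $(\gamma_1, \gamma_2)$, where it reads $\tfrac{1}{2} \geq 1$. For every other pair $(\beta_1, \beta_2)$ with $\beta_1+\beta_2 \in \Phi^+$, I would finish by a three-case analysis on $k := |\{\beta_1,\beta_2\} \cap \{\gamma_1, \gamma_2\}|$: $k=0$ gives LHS $=2$, $k=1$ gives LHS $=\tfrac{5}{4}$, and $k=2$ would force $\{\beta_1,\beta_2\}=\{\gamma_1,\gamma_2\}$ because the root system is reduced (so $2\gamma_i \notin \Phi$), contradicting the choice of pair. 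The RHS being at most $1$ in either surviving case, the inequality holds.

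The step I expect to require most care is the verification in (2), particularly in types $\tt B$, $\tt C$, $\tt D$, where sub-sums such as $\gamma+\gamma_i$ or $\gamma_1+2\gamma_2$ may themselves be positive roots. Several subtler candidates (weighting by $\operatorname{ht}(\beta)$, by $\sqrt{\operatorname{ht}(\beta)}$, or by a concave perturbation of an additive function) break precisely at such sub-sums; the rigid two-value assignment above succeeds because the values deviate from $1$ only on the two roots $\gamma_1$ and $\gamma_2$, so the sharpest inequality that can arise, $d_\gamma + d_{\gamma_i} \geq d_{\gamma+\gamma_i}$, becomes $1 + \tfrac{1}{4} \geq 1$.
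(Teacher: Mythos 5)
Your proof is correct and follows the same strategy as the paper's: for (1), exhibit a point satisfying all inequalities strictly (the paper uses the constant $2$, you use the constant $1$), and for (2), for each candidate facet construct an explicit witness violating only that inequality and verify the rest by a three-way case split on $\#\bigl(\{\beta_1,\beta_2\}\cap\{\gamma_1,\gamma_2\}\bigr)$ (the paper perturbs the constant $2$ by $-\mathbf{e}_{\gamma_1}-\mathbf{e}_{\gamma_2}+\mathbf{e}_{\gamma_1+\gamma_2}$, you set the two special coordinates to $1/4$ and leave the rest at $1$). The two witnesses and the resulting case arithmetic differ in the details but are interchangeable.
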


\begin{proof}
\begin{enumerate}
\item The function $\mathbf{2}$ taking value $2$ at all positive roots satisfies the strict inequalities in \eqref{Eq:Cone}, hence the cone is of full dimension.
\item We show that if an inequality $d_{\alpha}+d_{\beta}\geq d_{\gamma}$ for $\alpha,\beta,\gamma\in\Phi^+$ with $\alpha+\beta=\gamma$ is missing, there exists a point which is not contained in $\mathcal{D}(\mathfrak{g})$. For this, consider a point $\mathbf{d}\in\mathbb{R}^{\Phi^+}$ defined by:
$$\mathbf{d}:=\mathbf{2}-\mathbf{e}_\alpha-\mathbf{e}_\beta+\mathbf{e}_\gamma,$$
where $\mathbf{e}_\delta$ is the coordinate function of $\delta\in\Phi^+$. 

This point $\mathbf{d}$ fulfills all inequalities in \eqref{Eq:Cone} except $d_{\alpha}+d_{\beta}\geq d_{\gamma}$. Indeed, $d_\alpha=d_\beta=1$ but $d_\gamma=3$ so the above inequality is violated. We look at all other inequalities in \eqref{Eq:Cone}. 
\begin{itemize}
\item[-] If $\#\{\beta_1,\beta_2\}\cap\{\alpha,\beta\}=2$ then we are in the above case.
\item[-] If $\#\{\beta_1,\beta_2\}\cap\{\alpha,\beta\}=1$, then in the inequality \eqref{Eq:Cone} the left hand side is larger or equal to $3$, but all coordinate of $\mathbf{d}$ are less or equal to $3$. 
\item[-] If $\#\{\beta_1,\beta_2\}\cap\{\alpha,\beta\}=0$, then in the inequality \eqref{Eq:Cone} the left hand side is larger or equal to $4$, but all coordinate of $\mathbf{d}$ are less or equal to $3$. \end{itemize}
\end{enumerate}
\end{proof}

The polyhedral cone $\mathcal{D}(\mathfrak{g})$ has an $n$-dimensional lineality space. We will not need this result so the proof is omitted.

\subsection{Filtrations on enveloping algebras}

Recall that for a fixed $\bd\in\mathcal{D}(\g)$ and $\beta\in\Phi^+$, $d_\beta:=\bd(\beta)$. We interpret the cone $\mathcal{D}(\g)$ using filtration on Lie algebras.

For $m\in\mathbb{R}$, we define the following subspaces of $\mathfrak{n}_-$ and $U(\n_-)$:
$$(\n_{-})^{\mathbf{d}}_{\leq m}=\mathrm{span}\{f_\beta\mid\, \beta\in\Phi^+,\  d_\beta\leq m\},$$
$$U(\n_{-})^{\mathbf{d}}_{\leq m}=\mathrm{span}\{f_{\beta_1}\cdots f_{\beta_\ell}\mid  d_{\beta_1}+\ldots+d_{\beta_\ell}\leq m \}.$$

The following proposition follows from \eqref{Eq:Cone}.

\begin{proposition}
For a fixed $\mathbf{d}\in\mathcal{D}(\g)$,
\begin{enumerate}
\item the subspaces $\{(\n_{-})^{\mathbf{d}}_{\leq m}\mid m\in\mathbb{R}\}$ define an $\mathbb{R}$-filtration of Lie algebra on $\n_-$;
\item the subspaces $\{U(\n_{-})^{\mathbf{d}}_{\leq m}\mid m\in\mathbb{R}\}$ define an $\mathbb{R}$-filtration of algebra on $U(\n_-)$.
\end{enumerate} 
\end{proposition}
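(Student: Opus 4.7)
The plan is to verify the two standard axioms of an $\mathbb{R}$-filtration (nested inclusions together with compatibility with the bracket, respectively the multiplication) directly from the definition of $\mathcal{D}(\g)$. Monotonicity in $m$ is immediate since $(\n_-)^\bd_{\leq m}\subseteq(\n_-)^\bd_{\leq m'}$ and $U(\n_-)^\bd_{\leq m}\subseteq U(\n_-)^\bd_{\leq m'}$ whenever $m\leq m'$, directly from the spanning sets, and exhaustiveness follows from the finiteness of each $d_\beta$.

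For part (1), the only substantive point is that $[(\n_-)^\bd_{\leq m},(\n_-)^\bd_{\leq m'}]\subseteq(\n_-)^\bd_{\leq m+m'}$. By bilinearity of the bracket it suffices to check this on the spanning root vectors. Given $f_{\beta_1}\in(\n_-)^\bd_{\leq m}$ and $f_{\beta_2}\in(\n_-)^\bd_{\leq m'}$, I would recall that in a Chevalley basis $[f_{\beta_1},f_{\beta_2}]$ is either zero or a nonzero scalar multiple of $f_{\beta_1+\beta_2}$, and the latter occurs precisely when $\beta_1+\beta_2\in\Phi^+$. In that case the defining inequality \eqref{Eq:Cone} gives $d_{\beta_1+\beta_2}\leq d_{\beta_1}+d_{\beta_2}\leq m+m'$, so $f_{\beta_1+\beta_2}\in(\n_-)^\bd_{\leq m+m'}$, as required.

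For part (2), the multiplicative compatibility $U(\n_-)^\bd_{\leq m}\cdot U(\n_-)^\bd_{\leq m'}\subseteq U(\n_-)^\bd_{\leq m+m'}$ is even more direct: since $U(\n_-)^\bd_{\leq m}$ is defined as the span of unordered products $f_{\beta_1}\cdots f_{\beta_\ell}$ with $\sum d_{\beta_i}\leq m$ (not as a span of PBW monomials), concatenating a generator of $U(\n_-)^\bd_{\leq m}$ with a generator of $U(\n_-)^\bd_{\leq m'}$ yields a product with total weight $\leq m+m'$, hence an element of $U(\n_-)^\bd_{\leq m+m'}$.

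I do not expect any genuine obstacle here; the content of the proposition is essentially the statement that \eqref{Eq:Cone} is the correct translation of "the assignment $f_\beta\mapsto d_\beta$ extends to a Lie algebra filtration on $\n_-$." The only point that would require a brief remark is the implicit use of the Chevalley basis structure constants in (1) so that the bracket of two root vectors stays within the span of a single root vector; once this is noted, both claims reduce to unpacking the definitions and applying \eqref{Eq:Cone}.
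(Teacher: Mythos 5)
Your proof is correct and follows the only natural route, which is also what the paper intends when it states that the proposition ``follows from \eqref{Eq:Cone}'' without further detail: verify the bracket and product compatibilities directly on spanning elements. One small remark: you invoke the Chevalley basis to get that $[f_{\beta_1},f_{\beta_2}]$ is a (nonzero) multiple of $f_{\beta_1+\beta_2}$, but for the filtration axiom you only need that it is \emph{some} multiple of $f_{\beta_1+\beta_2}$ when $\beta_1+\beta_2\in\Phi^+$ and zero otherwise, which holds for any weight basis since root spaces are one-dimensional --- this matches the paper's earlier remark that the results of this section are independent of the choice of weight basis.
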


The subspaces 
$$(\n_{-})^{\mathbf{d}}_{< m}\subseteq (\n_{-})^{\mathbf{d}}_{\leq m}\ \ \text{and}\ \ U(\n_{-})^{\mathbf{d}}_{<m}\subseteq U(\n_{-})^{\mathbf{d}}_{\leq m}$$ 
are defined in replacing the inequalities in the definitions of $(\n_{-})^{\mathbf{d}}_{\leq m}$ and $U(\n_{-})^{\mathbf{d}}_{\leq m}$ by the strict ones. The associated graded Lie algebra and the associated graded algebra are defined by:
$$\n_-^{\mathbf{d}}:=\bigoplus_{m\in\mathbb{R}}(\n_-)^{\mathbf{d}}_{\leq m}/(\n_-)^{\mathbf{d}}_{<m}\ \ \text{and}\ \ U(\n_-)^{\mathbf{d}}:=\bigoplus_{m\in\mathbb{R}}U(\n_-)^{\mathbf{d}}_{\leq m}/U(\n_-)^{\mathbf{d}}_{<m}.$$

For $\beta\in\Phi^+$, the class of $f_\beta$ in $\mathfrak{n}_-^{\mathbf{d}}$ will be denoted by  $f_\beta^{\mathbf{d}}$.

According to the following proposition, taking associated graded and taking the universal enveloping commute. In the following we will not distinguish them.

\begin{proposition}\label{Prop:Functorial}
The linear map $\mathfrak{n}_-^{\mathbf{d}}\to U(\mathfrak{n}_-)^{\mathbf{d}}$ sending $f_\beta^{\mathbf{d}}$ to the class of $f_\beta$ in $U(\mathfrak{n}_-)^{\mathbf{d}}$ induces an isomorphism of algebras $U(\n_-^{\mathbf{d}})\cong U(\mathfrak{n}_-)^{\mathbf{d}}$.
\end{proposition}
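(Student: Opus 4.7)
My plan is to construct an algebra map $\Phi \colon U(\n_-^{\bd}) \to U(\n_-)^{\bd}$ via the universal property of the enveloping algebra and then verify bijectivity through a PBW argument. First I would equip $\n_-^{\bd}$ with its natural Lie bracket: for $x \in (\n_-)^{\bd}_{\leq a}/(\n_-)^{\bd}_{<a}$ and $y \in (\n_-)^{\bd}_{\leq b}/(\n_-)^{\bd}_{<b}$ with lifts $\tilde x, \tilde y$, define $[x,y]$ to be the class of $[\tilde x, \tilde y]$ in $(\n_-)^{\bd}_{\leq a+b}/(\n_-)^{\bd}_{<a+b}$; this is well-defined because the filtration on $\n_-$ is compatible with the bracket, as recorded in the preceding proposition. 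Similarly $U(\n_-)^{\bd}$ is an $\mathbb{R}$-graded associative algebra whose commutator of homogeneous classes of degrees $a$ and $b$ is the class of the commutator in degree $a+b$. Hence the linear map $\psi \colon \n_-^{\bd} \to U(\n_-)^{\bd}$ sending $f_\beta^{\bd}$ to the class of $f_\beta$ is a Lie algebra homomorphism into the commutator algebra, and the universal property extends it to an algebra homomorphism $\Phi$. Surjectivity of $\Phi$ is then immediate, since every graded piece of $U(\n_-)^{\bd}$ is spanned by classes of words $f_{\beta_1}\cdots f_{\beta_\ell}$, which are precisely the images under $\Phi$ of $f_{\beta_1}^{\bd}\cdots f_{\beta_\ell}^{\bd}$.

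The main work is injectivity, for which I would use PBW on both sides. Fix a total order on $\Phi^+$ and, for $\ba = (a_\beta)_{\beta \in \Phi^+} \in \N^{\Phi^+}$, write $f^{\ba}:=\prod_\beta f_\beta^{a_\beta}$ in the chosen order and set $\deg_{\bd}(\ba):=\sum a_\beta d_\beta$. By the classical PBW theorem, $\{f^{\ba}\}$ is a basis of $U(\n_-)$. The key claim is that $U(\n_-)^{\bd}_{\leq m}$ is exactly the span of those $f^{\ba}$ with $\deg_{\bd}(\ba)\leq m$. The inclusion $\supseteq$ is immediate; for $\subseteq$ I would straighten an arbitrary word of $\bd$-degree $\leq m$ into ordered monomials, observing that each elementary commutation $f_\gamma f_\delta = f_\delta f_\gamma + [f_\gamma, f_\delta]$ produces a correction whose straightened expansion still has $\bd$-degree at most $d_\gamma + d_\delta$, since $[f_\gamma,f_\delta] \in (\n_-)^{\bd}_{\leq d_\gamma+d_\delta}$ by the cone inequality \eqref{Eq:Cone}. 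Induction on word length then yields the claim, and it follows that the classes $[f^{\ba}]$ with $\deg_{\bd}(\ba) = m$ are linearly independent in the degree-$m$ component of $U(\n_-)^{\bd}$. Since PBW for $\n_-^{\bd}$ provides an ordered monomial basis of $U(\n_-^{\bd})$ which $\Phi$ maps bijectively onto this set of classes, injectivity follows.

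The anticipated main obstacle is establishing the straightening claim above. It is routine in spirit, but it is the only point at which the defining cone inequality \eqref{Eq:Cone} enters in an essential way, so one has to keep careful track of the filtration bound through each commutator move. Once this PBW-compatibility is in hand, the PBW theorem for $\n_-^{\bd}$ lines up both sides and promotes the surjection $\Phi$ to an isomorphism of graded algebras.
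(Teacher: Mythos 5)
Your proposal is correct and follows essentially the same route as the paper's proof: use the cone inequality together with the universal property of the enveloping algebra to produce the algebra homomorphism $U(\n_-^{\bd})\to U(\n_-)^{\bd}$, then invoke the PBW theorem on both sides to conclude it is an isomorphism. You have simply spelled out the PBW comparison (the straightening argument showing $U(\n_-)^{\bd}_{\leq m}$ is spanned by ordered monomials of $\bd$-degree at most $m$) that the paper leaves implicit.
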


\begin{proof}
By \eqref{Eq:Cone} and the universal property of the enveloping algebra, there exists an algebra homomorphism $U(\mathfrak{n}_-^{\mathbf{d}})\to U(\mathfrak{n}_-)^{\mathbf{d}}$. The PBW theorem can be then applied to conclude.
\end{proof}

Let $\mathrm{relint}(C)$ denote the relative interior of a polyhedral cone $C$.

By the following lemma, the isomorphism type of the Lie algebra $\n_-^{\mathbf{d}}$, when $\bd$ varies in $\mathcal{D}(\g)$, depends only on the relative interior of each face. 

\begin{lemma}\label{Lem:Face}
Let $F$ be a face of $\mathcal{D}(\g)$ and $\mathbf{d},\mathbf{e}\in\mathrm{relint}(F)$.
\begin{enumerate}
\item The linear map 
$$\n_-^{\mathbf{d}}\to \n_-^{\mathbf{e}},\ \  f_\beta^{\mathbf{d}}\mapsto f_\beta^{\mathbf{e}}, \ \ \text{for}\ \beta\in\Phi^+,$$ 
is an isomorphism of Lie algebras.
\item The induced algebra homomorphism $U(\n_-)^{\mathbf{d}}\to U(\n_-)^{\mathbf{e}}$ is an isomorphism.
\end{enumerate}
\end{lemma}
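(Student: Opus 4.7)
The plan is to reduce the lemma to a standard fact about polyhedral cones: on the relative interior of any face, the set of tight defining inequalities is constant. Combined with an explicit description of the Lie bracket of $\n_-^{\bd}$ in terms of these tight inequalities, both assertions then follow almost formally from the earlier propositions.

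The key computation is to express the bracket on $\n_-^{\bd}$ in the Chevalley basis. For $\beta_1,\beta_2\in\Phi^+$ with $\beta_1+\beta_2\in\Phi^+$, the identity $[f_{\beta_1},f_{\beta_2}]=N_{\beta_1,\beta_2}f_{\beta_1+\beta_2}$ holds in $\n_-$, and the right hand side lies in $(\n_-)^{\bd}_{\leq d_{\beta_1+\beta_2}}$. Passing to the associated graded at degree $d_{\beta_1}+d_{\beta_2}$, the bracket $[f_{\beta_1}^{\bd},f_{\beta_2}^{\bd}]$ equals $N_{\beta_1,\beta_2}f_{\beta_1+\beta_2}^{\bd}$ when $d_{\beta_1}+d_{\beta_2}=d_{\beta_1+\beta_2}$ and vanishes when $d_{\beta_1}+d_{\beta_2}>d_{\beta_1+\beta_2}$; the bracket also vanishes whenever $\beta_1+\beta_2\notin\Phi^+$, since it already vanishes in $\n_-$. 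Hence the Lie algebra structure on $\n_-^{\bd}$ is completely determined, via the fixed Chevalley structure constants, by the set $T(\bd):=\{(\beta_1,\beta_2)\mid\beta_1+\beta_2\in\Phi^+,\ d_{\beta_1}+d_{\beta_2}=d_{\beta_1+\beta_2}\}$ of pairs for which \eqref{Eq:Cone} is tight at $\bd$.

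Now I invoke the polyhedral observation: by Proposition \ref{Prop:Cone}(2), the inequalities \eqref{Eq:Cone} form the facet description of $\mathcal{D}(\g)$, so every face $F$ is cut out by setting a specific subset of them to equalities, and a point of $F$ lies in $\mathrm{relint}(F)$ precisely when no further inequalities are tight. Consequently $T(\bd)=T(\be)$ for any $\bd,\be\in\mathrm{relint}(F)$, and the linear bijection $f_\beta^{\bd}\mapsto f_\beta^{\be}$ identifies the bracket on $\n_-^{\bd}$ with that on $\n_-^{\be}$, proving (1). Part (2) is then immediate: by Proposition \ref{Prop:Functorial} we have $U(\n_-)^{\bd}\cong U(\n_-^{\bd})$ and likewise for $\be$, and the Lie algebra isomorphism from (1) extends functorially to an algebra isomorphism between the two universal enveloping algebras. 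The only non-formal ingredient is the polyhedral observation, which is entirely standard and rests on Proposition \ref{Prop:Cone}(2); the main obstacle, such as it is, is simply keeping track of the signs/structure constants to see that they transport verbatim across the identification.
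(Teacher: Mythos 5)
Your proposal is correct and follows essentially the same route as the paper, which compresses the whole argument to the single line ``apply Proposition~\ref{Prop:Cone}(2)'': you make explicit what that invocation hides, namely that the bracket $[f_{\beta_1}^{\bd},f_{\beta_2}^{\bd}]$ is $N_{\beta_1,\beta_2}f_{\beta_1+\beta_2}^{\bd}$ when the corresponding inequality is tight and $0$ otherwise, that this data is encoded by the tight set $T(\bd)$, and that $T(\bd)$ is constant on $\mathrm{relint}(F)$. The only minor remark is that the polyhedral fact you use --- the set of tight defining inequalities is the same at every point of $\mathrm{relint}(F)$ --- already holds for an arbitrary (even redundant) inequality description of a polyhedron, so the appeal to the facet description in Proposition~\ref{Prop:Cone}(2) is not strictly necessary; but since the paper also cites it, your write-up matches the intended argument.
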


\begin{proof}
The second part follows from the first part. To show the first part, it suffices to apply Proposition \ref{Prop:Cone}(2).
\end{proof}

When $\bd$ is taken from the (relative) interior of $\mathcal{D}(\g)$, we have:

\begin{lemma}\label{Lem:Abelian}
If $\mathbf{d}\in\mathrm{relint}(\mathcal{D}(\g))$, $U(\n_-)^{\mathbf{d}}$ is isomorphic as an algebra to the symmetric algebra $S(\n_-)$.
\end{lemma}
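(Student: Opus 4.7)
The plan is to reduce everything to the observation that $\n_-^{\mathbf{d}}$ is an abelian Lie algebra whenever $\mathbf{d}$ lies in $\mathrm{relint}(\mathcal{D}(\g))$. Granted this, Proposition~\ref{Prop:Functorial} gives $U(\n_-)^{\mathbf{d}} \cong U(\n_-^{\mathbf{d}})$, and the enveloping algebra of an abelian Lie algebra is canonically isomorphic to its symmetric algebra; since $\n_-^{\mathbf{d}}$ and $\n_-$ agree as weighted vector spaces (the map $f_\beta \mapsto f_\beta^{\mathbf{d}}$ is a linear isomorphism), we obtain $U(\n_-)^{\mathbf{d}} \cong S(\n_-^{\mathbf{d}}) \cong S(\n_-)$.

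The main step is therefore to verify that $[f_{\beta_1}^{\mathbf{d}}, f_{\beta_2}^{\mathbf{d}}] = 0$ in $\n_-^{\mathbf{d}}$ for all $\beta_1, \beta_2 \in \Phi^+$. By Proposition~\ref{Prop:Cone}(2), the defining inequalities of $\mathcal{D}(\g)$ are precisely the facet inequalities, so $\mathbf{d} \in \mathrm{relint}(\mathcal{D}(\g))$ forces every inequality in \eqref{Eq:Cone} to be strict, i.e.\ $d_{\beta_1} + d_{\beta_2} > d_{\beta_1+\beta_2}$ whenever $\beta_1 + \beta_2 \in \Phi^+$. Now if $\beta_1 + \beta_2 \notin \Phi^+$ then $[f_{\beta_1}, f_{\beta_2}] = 0$ already in $\n_-$ (so there is nothing to check); otherwise, $[f_{\beta_1}, f_{\beta_2}]$ is a scalar multiple of $f_{\beta_1 + \beta_2}$, which lies in $(\n_-)^{\mathbf{d}}_{\leq d_{\beta_1+\beta_2}} \subseteq (\n_-)^{\mathbf{d}}_{< d_{\beta_1} + d_{\beta_2}}$. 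Passing to the associated graded piece at degree $d_{\beta_1} + d_{\beta_2}$ therefore sends this commutator to zero, proving abelianness.

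I do not expect any serious obstacle: the one subtlety is to make sure the strict inequality in the previous paragraph really follows from $\mathbf{d}$ being in the relative interior, which is exactly the content of Proposition~\ref{Prop:Cone}(2). Once the Lie algebra $\n_-^{\mathbf{d}}$ is abelian, the identification of its enveloping algebra with $S(\n_-)$ is a standard consequence of the PBW theorem (already invoked in Proposition~\ref{Prop:Functorial}), so the proof should fit in a few lines.
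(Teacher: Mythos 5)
The paper states Lemma \ref{Lem:Abelian} without proof, and your argument supplies exactly the natural one that the authors presumably had in mind. It is correct: since $\mathcal{D}(\g)$ is full-dimensional with the inequalities \eqref{Eq:Cone} giving its irredundant facet description (Proposition~\ref{Prop:Cone}), every inequality is strict on the relative interior; this strictness kills the leading term of every bracket $[f_{\beta_1},f_{\beta_2}]$ upon passing to the associated graded, so $\n_-^{\mathbf{d}}$ is abelian, and the chain $U(\n_-)^{\mathbf{d}}\cong U(\n_-^{\mathbf{d}})\cong S(\n_-^{\mathbf{d}})\cong S(\n_-)$ follows from Proposition~\ref{Prop:Functorial}, the standard identification of the enveloping algebra of an abelian Lie algebra with its symmetric algebra, and the linear isomorphism $f_\beta\mapsto f_\beta^{\mathbf{d}}$.
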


\subsection{Filtrations on modules}

Given $\bd\in\mathcal{D}(\g)$, the algebra filtration on $U(\n_-)$ introduced in the previous subsection induces a vector space filtration on cyclic modules. 

For any dominant integral weight $\lambda\in\Lambda^+$, the simple $\g$-module $V(\lambda)=U(\n_-)\cdot v_\lambda$ is cyclic. We consider the induced $\mathbb{R}$-filtration on $V(\lambda)$:
$$(V(\lambda))_{\leq m}^{\mathbf{d}}=U(\n_-)^{\mathbf{d}}_{\leq m}\cdot v_\lambda,$$
and similarly its subspace $(V(\lambda))_{< m}^{\mathbf{d}}$.
The associated graded space will be denoted by $V^{\mathbf{d}}(\lambda)$, \emph{i.e.}
$$V^{\mathbf{d}}(\lambda):=\bigoplus_{m\in \mathbb{R}}\ (V(\lambda))^{\mathbf{d}}_{\leq m}/(V(\lambda))^{\mathbf{d}}_{<m}.$$ 

The vector space $V^{\mathbf{d}}(\lambda)$ carries a graded $U(\n_{-}^{\mathbf{d}})$-module structure. Indeed, for any $k,\ell\in \mathbb{R}$, we have:
$$U(\n_-)_{\leq k}^\bd\cdot (V(\lambda))_{\leq \ell}^\bd\subseteq (V(\lambda))_{\leq k+\ell}^\bd.$$
Let $v_\lambda^{\mathbf{d}}$ denote the image of $v_\lambda$ in $V^{\mathbf{d}}(\lambda)$. The $U(\n_{-}^{\mathbf{d}})$-module $V^{\mathbf{d}}(\lambda)$ is cyclic, having $v_\lambda^{\mathbf{d}}$ as a cyclic vector.

The $U(\mathfrak{n}_-^{\mathbf{d}})$-module $V^{\mathbf{d}}(\lambda)$ will be called a \textit{weighted PBW degeneration} of $V(\lambda)$. Certain examples of weighted PBW degenerations are studied in type $\tt A$ and $\tt C$ \cite{FFFM, BF24}. It is shown in these works that such degenerations are closely related to the tropicalisation of flag varieties.

\subsection{Degenerate flag varieties}

We fix $\mathbf{d}\in\mathcal{D}(\g)$ and let $N_-^\bd:=\mathrm{exp}(\n_-^{\bd})$ denote the connected Lie group with Lie algebra $\n_-^{\bd}$. 

\begin{definition}
For a fixed $\lambda\in\Lambda^+$, the $\bd$-degenerate flag variety associated to $\lambda$ is defined by:
$$\mathcal{F}^{\bd}(\lambda):=\overline{N^{\bd}_-\cdot [v_\lambda^\bd]}\hookrightarrow \mathbb{P}(V^{\bd}(\lambda)).$$
\end{definition}

One of the goals of the paper is to study geometric properties of $\mathcal{F}^{\bd}(\lambda)$ for certain $\bd\in\mathcal{D}(\g)$.


\section{Dynkin abelianisations: main result}\label{sec:dynkin}

\subsection{Dynkin cones in \texorpdfstring{$\mathcal{D}(\g)$}{in the degeneration cone}}\label{Subsec:ABCD}

The goal of this paper is to study the geometry of $\mathcal{F}^{\bd}(\lambda)$ for $\bd$ in certain polyhedral cones which will be termed \emph{Dynkin} cones. The reason for the choice of such a name will become clear later.


We start from fixing notations for classical Lie algebras, following \cite{Bou}. 

\subsubsection{Type $\tt A_n$}
For $1\leq i\leq j\leq n$, we denote the positive root
$$\alpha_{i,j}:=\alpha_{i}+\ldots+\alpha_j\in\Phi^+.$$
We will also use the notation $\alpha_i=\ve_i-\ve_{i+1}$ for $1\leq i\leq n$. In this notation, for $1\leq i< j\leq n$, $\alpha_{i,j-1}=\ve_i-\ve_j$.

For $\bd\in\mathcal{D}(\g)$ and $1\leq i\leq j\leq n$, we will write $d_{i,j}:=d_{\alpha_{i,j}}$ for short.

For a fixed subset $\bc=\{c_1,\ldots,c_t\}\subseteq [n-1]$, we define a polyhedral cone $F^{\bc}$ in $\mathcal{D}(\g)$ by the following inequalities and equalities (PA), stands for partial abelianisation: 
\begin{enumerate}
\item[-] for $1\leq i\leq j<\ell\leq n$ with $j\in\bc$, $d_{i,j}+d_{j+1,\ell}\geq d_{i,\ell}$;
\item[-] all other defining inequalities of $\mathcal{D}(\g)$ take equality,
\end{enumerate}
and the following equalities (DO), stands for differential operators:
\begin{enumerate}
\item[-] for $1\leq i< j\le k <\ell\leq n$ , $d_{i,k}+d_{j,\ell} = d_{i,\ell} + d_{j,k}$.
\end{enumerate}

\subsubsection{Type $\tt B_n$}
The positive roots are:
\begin{enumerate}
\item[-] for $1 \leq i\leq j\leq n$, $\alpha_{i,j}:=\alpha_i+\ldots+\alpha_j$;
\item[-] for $1\leq i< j\leq n$, $\alpha_{i,\overline{j}}:=\alpha_{i,n}+\alpha_{j,n}$.
\end{enumerate}
Similarly we have: for $1\leq i< j\leq n$, $\alpha_{i,j-1}=\ve_i-\ve_j$, for $1\leq i\leq n$, $\alpha_{i,n}=\ve_i$ and for $1\leq i<j\leq n$, $\alpha_{i,\overline{j}}=\ve_i+\ve_j$.

For $\bd\in\mathcal{D}(\g)$, we will write $d_{i,j}:=d_{\alpha_{i,j}}$ and $d_{i,\overline{j}}:=d_{\alpha_{i,\overline{j}}}$. The following total order will be considered:
$$1<2<\ldots<n<\overline{n}<\ldots<\overline{2}.$$

For a fixed subset $\bc=\{c_1,\ldots,c_t\}\subseteq [n-1]$, we define a polyhedral cone $F^{\bc}$ in $\mathcal{D}(\g)$ by the following inequalities and equalities (PA):
\begin{enumerate}
\item[-] for $1\leq i\leq j<\ell\ <\overline{j+1}$ with $j\in\bc$, $d_{i,j}+d_{j+1,\ell}\geq d_{i,\ell}$;
{
\item[-] for $1\leq i<k<j+1\leq n$ with $j\in\bc$, $d_{i,j}+d_{k,\overline{j+1}}\geq d_{i,\overline{k}}$;
\item[-] for $1\leq k<i<j+1\leq n$ with $j\in\bc$, $d_{i,j}+d_{k,\overline{j+1}}\geq d_{k,\overline{i}}$;}
\item[-] all other defining inequalities of $\mathcal{D}(\g)$ take equality,
\end{enumerate}
and the following equalities (DO):
\begin{enumerate}
\item[-] for $1\leq i< j\le k <\ell\leq n$ , $d_{i,k}+d_{j,\ell} = d_{i,\ell} + d_{j,k}$;
\item[-] for $1\leq i< j\le k ,\ell\leq n$ , $d_{i,\overline k}+d_{j,\ell} = d_{i,\ell} + d_{j,\overline k}$;
\item[-] for $1\leq i< j< k <\ell\leq n$ , $d_{i,\overline j}+d_{k,\overline \ell} = d_{i,\overline k} + d_{j,\overline \ell} = d_{i,\overline \ell}+d_{j,\overline k}$.
\end{enumerate}

\subsubsection{Type $\tt C_n$}
The positive roots are:
\begin{enumerate}
\item[-] for $1 \leq i\leq j\leq n$, $\alpha_{i,j}:=\alpha_i+\ldots+\alpha_j$;
\item[-] for $1\leq i\leq j\leq n-1$, $\alpha_{i,\overline{j}}:=\alpha_{i,n}+\alpha_{j,n-1}$.
\end{enumerate}
For convenience we usually use the notation $\alpha_{i,\overline{n}}:=\alpha_{i,n}$ for $1\leq i\leq n$.  Similarly we have: for $1\leq i< j\leq n$, $\alpha_{i,j-1}=\ve_i-\ve_j$, for $1\leq i\leq n$, $\alpha_{i,\overline{i}}=2\ve_i$ and for $1\leq i<j\leq n$, $\ve_i+\ve_j=\alpha_{i,\overline{j}}$. 

For $\bd\in\mathcal{D}(\g)$, denote $d_{i,j}:=d_{\alpha_{i,j}}$ and $d_{i,\overline{j}}:=d_{\alpha_{i,\overline{j}}}$. The following total order will be considered:
$$1<2<\ldots<n<\overline{n-1}<\ldots<\overline{1}.$$

For a fixed subset $\bc=\{c_1,\ldots,c_t\}\subseteq [n-1]$, we define a polyhedral cone $F^{\bc}$ in $\mathcal{D}(\g)$ by the following inequalities and equalities (PA):
\begin{enumerate}
\item[-] for $1\leq i\leq j\leq n-1$, $j+1\leq \ell\leq \overline{j+1}$ with $j\in\bc$, $d_{i,j}+d_{j+1,\ell}\geq d_{i,\ell}$;
\item[-] for $1\leq i\leq \ell\leq j+1\leq n$ with $j\in\bc$, $d_{i,j}+d_{\ell,\overline{j+1}}\geq d_{i,\overline{\ell}}$;
{
\item[-] for $1\leq \ell\leq i\leq j+1\leq n$ with $j\in\bc$, $d_{i,j}+d_{\ell,\overline{j+1}}\geq d_{\ell,\overline{i}}$;}
\item[-] all other defining inequalities of $\mathcal{D}(\g)$ take equality,
\end{enumerate}
and the following equalities (DO):
\begin{enumerate}
\item[-] for $1\leq i< j\le k <\ell\leq n$ , $d_{i,k}+d_{j,\ell} = d_{i,\ell} + d_{j,k}$;
\item[-] for $1\leq i< j\le k ,\ell\leq n$ , $d_{i,\overline k}+d_{j,\ell} = d_{i,\ell} + d_{j,\overline k}$;
\item[-] for $1\leq i< j< k <\ell\leq n-1$ , $d_{i,\overline j}+d_{k,\overline \ell} = d_{i,\overline k} + d_{j,\overline \ell} = d_{i,\overline \ell}+d_{j,\overline k}$.
\end{enumerate}

\subsubsection{Type $\tt D_n$}
The positive roots are:
\begin{enumerate}
\item[-] for $1 \leq i\leq j\leq n-1$, $\alpha_{i,j}:=\alpha_i+\ldots+\alpha_j$;
\item[-] for $1\leq i\leq n-2$, $\alpha_{i,n}:=\alpha_i+\ldots+\alpha_{n-2}+\alpha_n$;
\item[-] for $1\leq i< j\leq n-1$, $\alpha_{i,\overline{j}}:=\alpha_i+\ldots+\alpha_{n-2}+\alpha_{n-1}+\alpha_n+\alpha_{n-2}+\ldots+\alpha_j$.
\end{enumerate}
Similarly we have: for $1\leq i< j\leq n$, $\alpha_{i,j-1}=\ve_i-\ve_j$, for $1\leq i\leq n-2$, $\alpha_{i,n}=\ve_i+\ve_n$, $\alpha_n=\ve_{n-1}+\ve_n$ and for $1\leq i<j<n$, $\ve_i+\ve_j=\alpha_{i,\overline{j}}$.

For $\bd\in\mathcal{D}(\g)$, denote $d_{i,j}:=d_{\alpha_{i,j}}$ and $d_{i,\overline{j}}:=d_{\alpha_{i,\overline{j}}}$. The following total order will be considered:
$$1<2<\ldots<n<\overline{n-1}<\ldots<\overline{2}.$$

For a fixed subset $\bc=\{c_1,\ldots,c_t\}\subseteq  [n-3]$, we define a polyhedral cone $F^{\bc}$ in $\mathcal{D}(\g)$ by the following inequalities and equalities (PA):
\begin{enumerate}
\item[-] for $1\leq i\leq j< \ell < \overline{j+1}$ with $j\in\bc$, $d_{i,j}+d_{j+1,\ell}\geq d_{i,\ell}$;
{
\item[-] for $1\leq i<k<j+1\leq n$ with $j\in\bc$, $d_{i,j}+d_{k,\overline{j+1}}\geq d_{i,\overline{k}}$;
\item[-] for $1\leq k<i<j+1\leq n$ with $j\in\bc$, $d_{i,j}+d_{k,\overline{j+1}}\geq d_{k,\overline{i}}$;}
\item[-] all other defining inequalities of $\mathcal{D}(\g)$ take equality,
\end{enumerate}
and the following equalities (DO):
\begin{enumerate}
\item[-] for $1\leq i< j\le k <\ell\leq n$, $d_{i,k}+d_{j,\ell} = d_{i,\ell} + d_{j,k}$;
\item[-] for $1\leq i< j\le k ,\ell\leq n$, $d_{i,\overline k}+d_{j,\ell} = d_{i,\ell} + d_{j,\overline k}$;
\item[-] for $1\leq i< j< k <\ell\leq n-2$, $d_{i,\overline j}+d_{k,\overline \ell} = d_{i,\overline k} + d_{j,\overline \ell} = d_{i,\overline \ell}+d_{j,\overline k}$.
\end{enumerate}

\begin{definition}
The subset $F^\bc$ of $\mathcal{D}(\g)$ is called a \emph{Dynkin cone} of $\mathcal{D}(\g)$.
\end{definition}

The height $\mathrm{ht}(\beta)$ of a positive root $\beta\in\Phi^+$ is by definition the number of simple roots needed to sum up $\beta$. It is clear that the point $\mathbf{h}:=(h_\beta)_{\beta\in\Phi^+}$, $h_\beta:=\mathrm{ht}(\beta)$ is in $F^\bc$, while not in the relative interior in general.

\begin{lemma}
The Dynkin cone is a non-empty polyhedral cone in $\mathcal{D}(\g)$.
\end{lemma}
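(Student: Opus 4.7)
The plan is to exhibit an explicit point in $F^{\bc}$, namely the height vector $\mathbf{h}=(h_\beta)_{\beta\in\Phi^+}$ with $h_\beta=\mathrm{ht}(\beta)$ foreshadowed by the paragraph preceding the lemma, and verify it satisfies all the defining (PA) and (DO) constraints. The polyhedral part of the claim is automatic: $F^{\bc}$ is cut out of the polyhedral cone $\mathcal{D}(\g)$ by a finite list of additional linear equalities/inequalities, so it is itself a (possibly empty) polyhedral set, and being defined by homogeneous linear conditions it is a cone. Hence everything reduces to non-emptiness.

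The key observation I would emphasise is that $\mathrm{ht}$ is the restriction to $\Phi^+$ of the linear functional on the root lattice sending $\sum_i c_i\alpha_i\mapsto\sum_i c_i$. Consequently, for any finite list $\beta_1,\ldots,\beta_k,\gamma_1,\ldots,\gamma_\ell\in\Phi^+$ with $\sum\beta_i=\sum\gamma_j$ in the root lattice, one has $\sum h_{\beta_i}=\sum h_{\gamma_j}$. In particular, every generating inequality $d_{\beta_1}+d_{\beta_2}\geq d_{\beta_1+\beta_2}$ of $\mathcal{D}(\g)$ (with $\beta_1+\beta_2\in\Phi^+$) holds with \emph{equality} for $\mathbf{h}$. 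This immediately takes care of both the listed $\geq$-inequalities in (PA) and of the clause ``all other defining inequalities of $\mathcal{D}(\g)$ take equality'' simultaneously and uniformly across types $\tt A,B,C,D$.

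It remains to check the (DO) equalities. Here I would simply verify, case by case across types, that each listed equation $d_{\bullet}+d_{\bullet}=d_{\bullet}+d_{\bullet}$ (or the triple equality in types $\tt B,C,D$) is of the form $d_{\beta_1}+d_{\beta_2}=d_{\gamma_1}+d_{\gamma_2}$ where $\beta_1+\beta_2=\gamma_1+\gamma_2$ holds as an identity in the root lattice; for instance in type $\tt A$, using $\alpha_{i,j}=\ve_i-\ve_{j+1}$, one has $\alpha_{i,k}+\alpha_{j,\ell}=\ve_i+\ve_j-\ve_{k+1}-\ve_{\ell+1}=\alpha_{i,\ell}+\alpha_{j,k}$, and the analogous identities in types $\tt B,C,D$ follow by writing each $\alpha_{i,j}$ and $\alpha_{i,\overline j}$ in the $\ve$-basis. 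By the linearity observation of the previous paragraph, each such root-lattice identity forces the corresponding $\mathbf{h}$-equality.

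This concludes $\mathbf{h}\in F^{\bc}$. I do not foresee a serious obstacle: the only non-mechanical step is the case analysis for (DO), and each case reduces to a direct computation in the $\ve$-basis. The proof also already aligns with the subsequent remark that $\mathbf{h}$ generally lies on the boundary of $F^{\bc}$, since the ``genuine'' (PA) inequalities hold with equality for $\mathbf{h}$, so $\mathbf{h}$ lies on every facet cut out by (PA).
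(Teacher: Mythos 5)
Your proof is correct and takes essentially the same approach as the paper, which in the paragraph preceding the lemma simply asserts that the height vector $\mathbf{h}$ lies in $F^{\bc}$. Your observation that $\mathrm{ht}$ is linear on the root lattice, so that every $\mathcal{D}(\g)$-inequality holds with equality and every (DO) relation reduces to a root-lattice identity, is exactly the verification the paper leaves implicit.
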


\subsection{Main results}

Let $\g$ be finite-dimensional complex simple Lie algebra of classical type. For $\g$ being of type $\tt A_n$, $\tt B_n$ and $\tt C_n$, we let $\bc\subseteq [n-1]$ and for $\g$ of type $\tt D_n$, we fix $\bc\subseteq [n-3]$. Elements in $\bc$ are $c_1,\ldots,c_t$ with $1\leq c_1<\ldots<c_t\leq n$.

Let $\wt{\g}$ be the Lie algebra of the same type as $\g$ with rank $n+t$. Fix a triangular decomposition $\wt{\g}=\wt{\n}_+\oplus\wt{\mathfrak{h}}\oplus \wt{\n}_-$ of $\g$ and $\wt{\mathfrak{b}}_+:=\wt{\n}_+\oplus\wt{\mathfrak{h}}$; denote by $\wt{\Phi}$ the set of positive roots in $\wt{\g}$, $\wt{\Phi}^+$ the set of positive roots and $\{\wt{\alpha}_1,\ldots,\wt{\alpha}_{n+t}\}$ the set of simple roots. The weight lattice will be denoted by $\wt{\Lambda}$ and the monoid of dominant integral weights is $\wt{\Lambda}^+$. Fundamental weights are denoted by $\wt{\varpi}_1,\ldots,\wt{\varpi}_{n+t}$. For $\lambda\in\wt{\Lambda}^+$, the simple module of highest weight $\lambda$ is $\wt{V}(\lambda)$. Let $W(\wt{\g})$ be the Weyl group of $\wt{\g}$ with simple reflections $s_1,\ldots,s_{n+t}$.

The connected simply-connected semi-simple algebraic group associated to $\g$ (resp. $\wt{\g}$) will be denoted by $G$ (resp. $\wt{G}$). The corresponding Borel subgroup with Lie algebra $\wt{\mathfrak{b}}_+$ is denoted by $\wt{B}$. For a root $\beta\in\Phi$ (resp. $\wt{\beta}\in\wt{\Phi}$), let $U_\beta$ (resp. $U_{\wt{\beta}}$) denote the root subgroup associated to $\beta$ (resp. $\wt{\beta}$) in $G$ (resp. $\wt{G}$).

Fix $\bd\in\mathrm{relint}(F^\bc)$, we have introduced the graded Lie algebra $\n^\bd_-$, the weight PBW degeneration $V^\bd(\lambda)$ as $U(\n_-^\bd)$-module and the $\bd$-degenerate flag variety 
$\mathcal{F}^\bd(\lambda)$, embedded as a highest weight orbit in $\mathbb{P}(V^\bd(\lambda))$.

The main result of the paper is:

\begin{theorem}\label{Thm:IsoModule}
There exist
\begin{enumerate}
\item[-] a monoid homomorphism $\Psi:\Lambda^+\to\wt{\Lambda}^+$,
\item[-] a Weyl group element $w_\bd\in W(\wt{\g})$,
\item[-] a $U(\n_-^\bd)$-module structure on the Demazure module $\wt{V}_{w_\bd}(\Psi(\lambda))$,
\end{enumerate}
such that $V^\bd(\lambda)\cong\wt{V}_{w_\bd}(\Psi(\lambda))$ as $U(\n_-^\bd)$-modules.
\end{theorem}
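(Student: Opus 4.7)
I would follow the sandwich strategy of \cite{CLL} sketched in the introduction: construct two surjections between $V^\bd(\lambda)$ and $\wt V_{w_\bd}(\Psi(\lambda))$, first for fundamental weights and then for arbitrary $\lambda$. The three data come from the ``stretching'' interpretation of $F^\bc$. The monoid map $\Psi\colon\Lambda^+\to\wt\Lambda^+$ is $\Psi(\varpi_i)=\wt\varpi_{\sigma(i)}$, where $\sigma\colon[n]\hookrightarrow[n+t]$ is the order-preserving injection whose image skips the $t$ nodes newly inserted after each $c_j\in\bc$. By Lemma~\ref{Lem:Face} the Weyl element $w_\bd$ depends only on the face $F^\bc$; concretely, I would take a product of simple reflections attached to each inserted node, reproducing the word of \cite{CIL15, CLL} in type $\tt A$ and using its folded analogue in the other classical types. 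The $U(\n_-^\bd)$-module structure on $\wt V_{w_\bd}(\Psi(\lambda))$ arises from a Lie algebra embedding $\iota\colon\n_-^\bd\hookrightarrow\wt\n_-$ that sends each class $f_\beta^\bd$ to an explicit (in types $\tt B$, $\tt C$, $\tt D$ possibly folded) root vector of $\wt\g$; the (PA) inequalities of $F^\bc$ are precisely the conditions needed for $\iota$ to respect the partially abelianised brackets.

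\textbf{Fundamental case and reduction.} In Section~\ref{Sec:Fund} I would first settle the fundamental modules $V^\bd(\varpi_k)$. Both sides are cyclic, so a candidate map is determined by sending the highest weight generator $v_{\varpi_k}^\bd$ to $v_{w_\bd(\Psi(\varpi_k))}$; checking that $\iota$ intertwines the actions on a natural spanning set and comparing characters upgrades the resulting surjection to an isomorphism. For arbitrary $\lambda=\sum a_i\varpi_i$, the Cartan embedding $V(\lambda)\hookrightarrow\bigotimes V(\varpi_i)^{\otimes a_i}$ respects the $\bd$-filtrations, and so induces an injection $V^\bd(\lambda)\hookrightarrow\bigotimes V^\bd(\varpi_i)^{\otimes a_i}$; combining with the fundamental isomorphism produces a surjection $V^\bd(\lambda)\twoheadrightarrow\wt V_{w_\bd}(\Psi(\lambda))$, with the target appearing as the cyclic submodule generated by an extremal weight vector in the corresponding tensor product of Demazure modules. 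For the reverse surjection I would invoke Joseph's description \cite{Jos} of $\wt V_{w_\bd}(\Psi(\lambda))$ by generators and relations of the form $\wt f_{\wt\beta}^{\,N+1}\cdot v=0$, and verify that each such relation already holds in $V^\bd(\lambda)$ after pulling back along $\iota$; two surjections of finite-dimensional cyclic modules of the same character then yield the isomorphism.

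\textbf{Main obstacle.} The heart of the argument is the verification of Joseph's relations inside the \emph{partially} abelianised algebra $U(\n_-^\bd)$. The (DO) equalities built into $F^\bc$ are indispensable here: they encode exactly the degree identities needed to reorder products of root vectors in $U(\n_-^\bd)$ so that Joseph's relations translate to identities already known in $U(\n_-)$. In the types $\tt B$, $\tt C$, $\tt D$ an additional layer appears, because the image under $\iota$ of a single $f_\beta^\bd$ may naturally be a sum of two root vectors of $\wt\n_-$ (from roots in the same orbit under the folding); simultaneously matching filtration degrees and bracket relations in this situation will be the most delicate step, and is presumably what occupies Sections~\ref{sec:reduction}--\ref{Sec:Fund}.
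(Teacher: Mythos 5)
Your plan matches the paper's strategy essentially point for point: $\Psi(\varpi_i)=\wt\varpi_{\sigma(i)}$, an explicit Weyl element $w_{\bc}$ built from the reduced words of \cite{CIL15,CLL}, a Lie-algebra embedding $\sigma_{\bd}\colon\n_-^{\bd}\hookrightarrow\wt\n_-$ giving the $U(\n_-^{\bd})$-structure on $\wt V_{w_{\bd}}(\Psi(\lambda))$, reduction to fundamentals via the Cartan component, and the sandwich argument (one surjection from the Cartan embedding, the other from Joseph's relations) is exactly what Sections~\ref{sec:reduction}--\ref{Sec:Fund} carry out. One small correction to your final paragraph: since $\wt\g$ is of the \emph{same} type as $\g$ (the diagram is stretched, not folded), $\sigma_{\bd}$ always sends $f_\beta^{\bd}$ to the single Chevalley root vector $f_{\psi_{\bc}(\beta)}$, with $\psi_{\bc}$ a bijection onto a closed subset of $\wt\Phi^+$; the type $\tt B$/$\tt D$ delicacy you anticipate appears instead on the raising-operator side, where $\pi_{\bc}(\wt\gamma)$ may fail to be a root and $e_{\wt\gamma}$ must be matched with $e_{\alpha_{j,n}}^2$ (type $\tt B$) or $e_{\alpha_{j,n-2}}e_{\alpha_{j,\overline{n-1}}}$ (type $\tt D$), with a lower-degree term discarded in the associated graded (Lemma~\ref{Lem:Trio}, Section~\ref{Sec:LemmaProof}).
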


As a consequence, we can realise $\mathcal{F}^\bd(\lambda)$ as a Schubert variety. We embed the Schubert variety in the following way: 
$$X_{w_\bd}(\Psi(\lambda))\hookrightarrow \wt{G}/\wt{P}_{\Psi(\lambda)}\hookrightarrow \mathbb{P}(\wt{V}({\Psi(\lambda)}))$$
where $\wt{P}_{\Psi(\lambda)}$ is the parabolic subgroup in $\wt{G}$ stabilising the weight $\Psi(\lambda)$. The image of $X_{w_\bd}(\Psi(\lambda))$ is contained in $\mathbb{P}(\wt{V}_{w_\bd}(\Psi(\lambda)))$.

\begin{corollary}\label{Cor:IsoProj}
For $\bd\in\mathrm{relint}(F^\bc)$, the $\bd$-degenerate flag variety $\mathcal{F}^\bd(\lambda)$ is isomorphic to the Schubert variety $X_{w_\bd}(\Psi(\lambda))$ as projective varieties.
\end{corollary}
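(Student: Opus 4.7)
The plan is to descend the module isomorphism provided by Theorem \ref{Thm:IsoModule} to the level of projective varieties. First I would fix an isomorphism $\varphi\colon V^\bd(\lambda)\xrightarrow{\sim}\wt V_{w_\bd}(\Psi(\lambda))$ of $U(\n_-^\bd)$-modules from Theorem \ref{Thm:IsoModule}. Both $v_\lambda^\bd$ and $v_{w_\bd\Psi(\lambda)}$ are cyclic generators for the respective $U(\n_-^\bd)$-module structures (the latter by construction of the $\n_-^\bd$-action on $\wt V_{w_\bd}(\Psi(\lambda))$ produced in that theorem), so one can rescale $\varphi$ so that $\varphi(v_\lambda^\bd)=v_{w_\bd\Psi(\lambda)}$. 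Projectivising, $\varphi$ induces an isomorphism $[\varphi]\colon\mathbb P(V^\bd(\lambda))\xrightarrow{\sim}\mathbb P(\wt V_{w_\bd}(\Psi(\lambda)))$, and the task becomes to show that $[\varphi]$ carries $\mathcal F^\bd(\lambda)$ onto $X_{w_\bd}(\Psi(\lambda))$.

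Next I would exploit that each $f_\beta^\bd$ acts nilpotently on the finite-dimensional module $V^\bd(\lambda)$, so the $\n_-^\bd$-equivariance of $\varphi$ exponentiates to $N_-^\bd$-equivariance. This yields
$$[\varphi]\bigl(\mathcal F^\bd(\lambda)\bigr)=\overline{N_-^\bd\cdot[v_{w_\bd\Psi(\lambda)}]},$$
where the $N_-^\bd$-action on the right hand side is that induced from the transferred $U(\n_-^\bd)$-module structure. The proof of Theorem \ref{Thm:IsoModule}, following the embedding trick of \cite{CLL}, realises this action through a Lie algebra homomorphism $\n_-^\bd\to\wt{\mathfrak{b}}$, so after exponentiation the resulting morphism $N_-^\bd\to\wt B$ places the orbit $N_-^\bd\cdot[v_{w_\bd\Psi(\lambda)}]$ inside the Bruhat cell $\wt B\cdot[v_{w_\bd\Psi(\lambda)}]$, whose closure is by definition the Schubert variety $X_{w_\bd}(\Psi(\lambda))$. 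This gives the inclusion $[\varphi]\bigl(\mathcal F^\bd(\lambda)\bigr)\subseteq X_{w_\bd}(\Psi(\lambda))$.

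For the reverse inclusion, both sides are irreducible projective varieties (the left as the closure of the orbit of the connected group $N_-^\bd$, the right as a Schubert variety), so it suffices to match dimensions. Flatness of the $\bd$-degeneration yields $\dim\mathcal F^\bd(\lambda)=\dim G/P_\lambda$, and I would then verify that the Weyl element $w_\bd$ produced in Theorem \ref{Thm:IsoModule} has length (modulo the stabiliser of $\Psi(\lambda)$ inside the Weyl group of $\wt{\g}$) equal to this same value. I expect this dimension match to be the main obstacle: it requires unwinding the explicit combinatorial description of $w_\bd$ from the proof of Theorem \ref{Thm:IsoModule} and comparing the number of inversions of $w_\bd$ outside $\wt P_{\Psi(\lambda)}$ to the dimension of the original partial flag variety $G/P_\lambda$. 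Once this is in place, equality of the two irreducible varieties is immediate and the corollary follows.
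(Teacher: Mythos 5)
Your strategy differs from the paper's. The paper never counts dimensions: using the identity $\wt{B}\cdot[v_{\lambda'}]=w_\bd\prod_{\wt{\beta}\in\wt{\Phi}_{w_\bd}^+}U_{-\wt{\beta}}\cdot[v_{\lambda'}]$ together with the twisted $N_-^\bd$-module structure placed on $\wt{V}_{w_\bd}(\lambda')$, it shows directly that $\vp_\lambda$ carries the open cell of the Schubert variety bijectively onto $N^\bd_-\cdot[v_\lambda^\bd]$, and then takes closures. Your route establishes only the one inclusion $[\varphi]\bigl(\mathcal F^\bd(\lambda)\bigr)\subseteq X_{w_\bd}(\Psi(\lambda))$ and falls back on irreducibility plus a dimension match — and as written both halves of that match are incomplete. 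For $\dim\mathcal F^\bd(\lambda)=\dim G/P_\lambda$ you invoke flatness of the $\bd$-degeneration, which the paper never states or proves and which is not obviously available at this point. It is also unnecessary: one checks directly that $f_\beta^\bd\cdot v_\lambda^\bd=0$ iff $(\lambda,\beta^\vee)=0$ (the nontrivial direction uses the inequalities defining $\mathcal D(\g)$ to see that the weight space of weight $\lambda-\beta$ in $(V(\lambda))^\bd_{<d_\beta}$ vanishes), so the stabiliser of $[v_\lambda^\bd]$ in $N_-^\bd$ has Lie algebra $\mathrm{span}\{f_\beta^\bd:(\lambda,\beta^\vee)=0\}$ and the orbit already has dimension $\#\{\beta\in\Phi^+:(\lambda,\beta^\vee)>0\}=\dim G/P_\lambda$. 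For $\dim X_{w_\bd}(\Psi(\lambda))$ you announce a combinatorial computation you have not carried out and flag it as the main obstacle; in fact no new work is needed, since $\dim X_{w_\bd}(\Psi(\lambda))=\#\{\wt{\beta}\in\wt{\Phi}_{w_\bd}^+:(\Psi(\lambda),\wt{\beta}^\vee)>0\}$, and Propositions \ref{Prop:WeylGroup} and \ref{Prop:Psi} already identify $\wt{\Phi}_{w_\bd}^+$ with $\psi_\bc(\Phi^+)$ and give $(\Psi(\lambda),\psi_\bc(\beta)^\vee)=(\lambda,\beta^\vee)$, so the count equals $\dim G/P_\lambda$ immediately. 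With those two repairs your argument closes, but the paper's cell-by-cell identification is shorter and sidesteps dimension theory altogether; the trade-off is that the paper must explicitly unwind the $w_\bd$-conjugated module structure on $\wt{V}_{w_\bd}(\lambda')$, a step you leave implicit behind the appeal to $N_-^\bd$-equivariance.
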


From Corollary \ref{Cor:IsoProj}, for any $\bd\in\mathrm{relint}(F^\bc)$, $\mathcal{F}^\bd(\lambda)$ is isomorphic to a Schubert variety, hence it is normal, Cohen-Macaulay and of rational singularities. We do not know whether in general for $\be\in F^\bc\setminus\mathrm{relint}(F^\bc)$, $\mathcal{F}^\be(\lambda)$ is still isomorphic to a Schubert variety. However, we conjecture that certain geometric properties, fulfilled by the Schubert varieties, still hold true for $\mathcal{F}^\be(\lambda)$.

\begin{conjecture}\label{Conj:Geom}
For any $\be\in F^\bc\setminus\mathrm{relint}(F^\bc)$, $\mathcal{F}^\be(\lambda)$ is normal, Cohen-Macaulay and of rational singularities.
\end{conjecture}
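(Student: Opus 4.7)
The plan is to realise $\mathcal{F}^\be(\lambda)$ as a flat degeneration of the Schubert varieties $\mathcal{F}^{\bd}(\lambda)$ for $\bd\in\mathrm{relint}(F^\bc)$, and then to transfer normality, Cohen-Macaulayness, and rational singularities across this degeneration using Frobenius splitting techniques reduced mod $p$ and lifted to characteristic zero. First, I would pick any $\bd_0 \in \mathrm{relint}(F^\bc)$ and consider the line segment $\bd_t := (1-t)\be + t\bd_0$ for $t \in (0,1]$. Since $F^\bc$ is a convex polyhedral cone containing $\bd_0$ in its relative interior, each $\bd_t$ with $t>0$ also lies in $\mathrm{relint}(F^\bc)$, so by Theorem~\ref{Thm:IsoModule} and Corollary~\ref{Cor:IsoProj} the variety $\mathcal{F}^{\bd_t}(\lambda)$ is isomorphic to the Schubert variety $X_{w_{\bd_t}}(\Psi(\lambda))$, and by Lemma~\ref{Lem:Face} the isomorphism class of the ambient module $V^{\bd_t}(\lambda)$ and of the Lie algebra $\n_-^{\bd_t}$ is constant on $(0,1]$.

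Next, I would build an explicit $\mathbb{A}^1$-family $\mathcal{X}\to\mathbb{A}^1$ via a Rees-type construction from the two compatible filtrations on $U(\n_-)$ determined by $\bd_0$ and $\be$: passing from $\bd_0$ to $\be$ corresponds to coarsening some strict inequalities in \eqref{Eq:Cone} to equalities, so the filtration determined by $\be$ is obtained from that of $\bd_0$ by a further one-parameter Gröbner-style degeneration. The Rees algebra attached to this degeneration should yield a flat family with generic fibre $\mathcal{F}^{\bd_0}(\lambda)$ and special fibre $\mathcal{F}^\be(\lambda)$, embedded in a constant ambient $\mathbb{P}(V^{\bd_0}(\lambda))$ via the identification of graded pieces. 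Flatness would follow, as usual, from the equidimensionality of the fibres together with constancy of Hilbert polynomials — a computation that reduces to comparing the graded characters of $V^{\bd_0}(\lambda)$ and $V^{\be}(\lambda)$, both of which can be read off from the defining relations described in the third unnumbered theorem of the introduction.

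The transfer of properties would then proceed as follows. Since the generic fibre $\mathcal{F}^{\bd_0}(\lambda)$ is a Schubert variety, it is Frobenius split (in positive characteristic) with a compatible splitting extending to its Bott--Samelson resolution. I would show that this Frobenius splitting extends to the total family $\mathcal{X}$ and makes the special fibre $\mathcal{F}^\be(\lambda)$ compatibly split. Mehta--Ramanathan and Mehta--van der Kallen type arguments then give reducedness, weak normality, and Cohen--Macaulayness in characteristic $p$, and a standard spread-out and lift argument transports these to characteristic zero. Rational singularities in characteristic zero would follow by exhibiting a resolution obtained as the limit of the Bott--Samelson resolution of $\mathcal{F}^{\bd_t}(\lambda)$ and invoking Kempf's vanishing applied fibrewise, using that the derived pushforward commutes with base change along a flat morphism whose fibres satisfy cohomology vanishing.

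The main obstacle I anticipate is proving that the special fibre of this family really is $\mathcal{F}^\be(\lambda)$ with its reduced scheme structure, rather than some non-reduced or reducible subscheme: without this, normality fails trivially and the rest of the argument collapses. Controlling the special fibre requires a presentation of the defining ideal of $V^\be(\lambda)$ analogous to the one in Theorem~\ref{Thm:IsoModule} — generated by $U(\n_+)\cdot f_\alpha^{\ell_\alpha}$ for appropriate exponents — but valid for $\be$ on the boundary of $F^\bc$, where our current Demazure identification breaks down. The natural route, which I expect to be the hardest step, is to identify $V^\be(\lambda)$ with a Demazure (or generalised Demazure) module for an even larger, possibly affine Kac--Moody Lie algebra obtained by further stretching the Dynkin diagram as hinted at in the introduction; only then would the ideal-theoretic control needed for reducedness of the special fibre come for free, and the rest of the program above would go through.
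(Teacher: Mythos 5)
You are proving Conjecture \ref{Conj:Geom}, which the paper itself does not prove: it is left open, with only a remark reducing it to the existence of a suitable flat family. So your proposal must be judged as an attack on an open problem, and as such it is a reasonable program but contains gaps that are not merely technical, and one of its central mechanisms points in the wrong direction. A boundary point $\be\in F^\bc\setminus\mathrm{relint}(F^\bc)$ turns \emph{more} of the inequalities \eqref{Eq:Cone} into equalities than an interior point $\bd_0$, and an equality $d_{\beta_1}+d_{\beta_2}=d_{\beta_1+\beta_2}$ means the bracket $[f_{\beta_1},f_{\beta_2}]$ \emph{survives} in the associated graded algebra; hence $\n_-^{\be}$ and $V^{\be}(\lambda)$ are \emph{less} degenerate than $\n_-^{\bd_0}$ and $V^{\bd_0}(\lambda)$ (the extreme case being the height point $\mathbf{h}\in F^\bc$, where nothing degenerates at all). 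Consequently your Rees construction, which presents the $\be$-filtration as a ``further one-parameter Gr\"obner-style degeneration'' of the $\bd_0$-filtration, is inverted: $V^{\be}(\lambda)$ is not an associated graded of $V^{\bd_0}(\lambda)$, and the natural candidate family has $\mathcal{F}^{\be}(\lambda)$ as the general fibre and the Schubert variety $\mathcal{F}^{\bd_0}(\lambda)$ as the special fibre, not the other way around.

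This inversion matters because your transfer step runs from the general fibre to the special fibre, and that direction is simply false: normality, Cohen--Macaulayness and rational singularities do not specialise in flat families (smooth curves degenerate to non-normal ones all the time -- this failure is precisely why the conjecture is nontrivial), and a Frobenius splitting of the general fibre does not ``extend'' to a compatible splitting of the total space and special fibre; such an extension would itself be a substantive claim requiring proof. In the corrected direction the heavy machinery is unnecessary: openness of the Cohen--Macaulay and normal loci in a proper flat family, together with Elkik's theorem that rational singularities deform, would propagate all three properties from the special fibre $\mathcal{F}^{\bd_0}(\lambda)$ (a Schubert variety by Corollary \ref{Cor:IsoProj}) to nearby fibres isomorphic to $\mathcal{F}^{\be}(\lambda)$. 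What then remains -- and this is the obstacle you correctly flag at the end, and the reason the statement is still a conjecture -- is to prove flatness and to identify both fibres, with their reduced structures, as the varieties in question; this needs control of the defining ideal of $V^{\be}(\lambda)$ for boundary $\be$, where the Demazure identification of Theorem \ref{Thm:IsoModule} is unavailable since it requires $\bd\in\mathrm{relint}(F^\bc)$, and your fallback via Demazure modules of further-stretched (possibly affine) diagrams is speculative and not carried out. So the proposal does not close the conjecture.
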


\begin{remark}
\begin{enumerate}
\item Since these three geometric properties are open properties, it remains to show that for any $\be\in F^\bc\setminus\mathrm{relint}(F^\bc)$ and $\bd\in\mathrm{relint}(F^\bc)$, there exists a flat family over $\mathbb{C}$ such that the generic fibre is isomorphic to $\mathcal{F}^\bd(\lambda)$ and the special fibre is isomorphic to $\mathcal{F}^\be(\lambda)$. 
\item In type $\tt B_n$, a similar question is asked by Makhlin in \cite[Section 6]{Makh}.
\end{enumerate}
\end{remark}

As a second consequence of the theorem, the isomorphism types of the $\bd$-degenerate flag varieties, as soon as $\bd$ is contained in the relative interior of a Dynkin cone, depend only on the face.

\begin{corollary}
Let $F^\bc$ be a Dynkin cone of $\mathcal{D}(\g)$ and $\bd,\be\in\mathrm{relint}(F)$. Then $\mathcal{F}^{\bd}(\lambda)\cong \mathcal{F}^{\be}(\lambda)$ as projective varieties.
\end{corollary}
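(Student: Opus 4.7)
My plan is to deduce this corollary as a short consequence of Theorem \ref{Thm:IsoModule} together with Corollary \ref{Cor:IsoProj}, by checking that the data $(\Psi, w_\bd)$ attached to an interior point of a Dynkin cone depends only on the cone $F^\bc$ itself. Concretely, I would first apply Corollary \ref{Cor:IsoProj} to both $\bd$ and $\be$, obtaining isomorphisms of projective varieties $\mathcal{F}^\bd(\lambda) \cong X_{w_\bd}(\Psi(\lambda))$ and $\mathcal{F}^\be(\lambda) \cong X_{w_\be}(\Psi(\lambda))$. The monoid homomorphism $\Psi$ is extracted from the type of $\g$ and the combinatorial datum $\bc$ alone (as its definition in Section \ref{sec:dynkin} never references the concrete values $d_\beta$), so it agrees for the two points.

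The substantive step is then to identify $w_\bd$ with $w_\be$. I would invoke Lemma \ref{Lem:Face}: for any $\bd, \be \in \mathrm{relint}(F^\bc)$, the assignment $f_\beta^\bd \mapsto f_\beta^\be$ is a canonical isomorphism $\n_-^\bd \cong \n_-^\be$ of graded Lie algebras, and it induces a compatible isomorphism $U(\n_-^\bd) \cong U(\n_-^\be)$ of enveloping algebras. I would then upgrade this to a linear isomorphism $V^\bd(\lambda) \cong V^\be(\lambda)$ intertwining the two module structures and sending $v_\lambda^\bd$ to $v_\lambda^\be$: since the partition of the defining inequalities of $\mathcal{D}(\g)$ into strict and equality-type constraints is the same throughout $\mathrm{relint}(F^\bc)$, the $\bd$- and $\be$-filtrations on the cyclic module $V(\lambda) = U(\n_-)v_\lambda$ refine the PBW monomial basis in the same combinatorial way, so the two associated graded modules are canonically identified. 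Transporting this identification across the Demazure isomorphisms of Theorem \ref{Thm:IsoModule} forces $w_\bd$ and $w_\be$ to select the same extremal weight in $\wt V(\Psi(\lambda))$, hence $w_\bd = w_\be$, and the corollary follows.

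The main obstacle is exactly this upgrade from the Lie algebra isomorphism of Lemma \ref{Lem:Face} to a matching isomorphism of the filtered modules: one has to argue that, although the numerical filtrations on $V(\lambda)$ induced by $\bd$ and $\be$ are genuinely different, they induce the same grading on PBW monomials $f_{\beta_1}\cdots f_{\beta_k} v_\lambda$ up to the canonical relabelling $f_\beta^\bd \leftrightarrow f_\beta^\be$. This is essentially a module-level reprise of the argument for Lemma \ref{Lem:Face}. As a stand-alone alternative, one can bypass Theorem \ref{Thm:IsoModule} altogether: using Lemma \ref{Lem:Face} directly, construct a $G$-equivariant commutative square relating the highest weight orbits $N_-^\bd\cdot[v_\lambda^\bd]$ and $N_-^\be\cdot[v_\lambda^\be]$ through the identification of $N_-^\bd$ with $N_-^\be$, and then pass to projective closures to obtain the desired isomorphism of varieties.
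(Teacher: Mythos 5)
Your first paragraph captures the paper's actual argument, and it is correct. What you flag as the "substantive step" is in fact handled by definition: in Section~\ref{sec:reduction} the Weyl group element is simply set $w_\bd := w_\bc$ (just before the subsubsection on the $U(\n_-^\bd)$-module structure), built entirely from the combinatorial datum $\bc$ via $\wt\Phi^+_\bc = \psi_\bc(\Phi^+)$ and the reduced word of Lemma~\ref{Lem:RedDec}. So $w_\bd = w_\be$ holds tautologically, both Schubert varieties produced by Corollary~\ref{Cor:IsoProj} are literally the same $X_{w_\bc}(\Psi(\lambda))$, and the corollary is immediate.

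The intermediate machinery in your second paragraph is not needed and, as written, has a genuine gap. A canonical isomorphism $V^\bd(\lambda)\cong V^\be(\lambda)$ of modules is \emph{not} a "module-level reprise of Lemma~\ref{Lem:Face}": that lemma only compares structure constants of the graded Lie algebras, which indeed depend solely on the strict/equal pattern of the (PA) constraints. The induced filtrations on $V(\lambda)$ are more delicate: two PBW monomials of the same weight may have degrees that are ordered differently for $\bd$ than for $\be$, so the assertion that the two filtrations "refine the PBW monomial basis in the same combinatorial way" is not self-evident and needs a separate argument. In the paper the isomorphism $V^\bd(\lambda)\cong V^\be(\lambda)$ is a \emph{consequence} of Theorem~\ref{Thm:IsoModule} (both sides being $\cong \wt{V}_{w_\bc}(\Psi(\lambda))$), so your proposed "stand-alone alternative" that bypasses the theorem would require a genuinely new proof; and the step where you transport the identification across the Demazure isomorphisms to extract $w_\bd = w_\be$ presupposes that very module isomorphism, so it cannot be used to establish it.
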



\section{Dynkin abelianisation}\label{sec:reduction}

The goal of this section is to reduce the proof of Theorem \ref{Thm:IsoModule} and Corollary \ref{Cor:IsoProj} to the case of fundamental weights. We keep notations as in the previous section. We sometimes drop the upper index $\bd$ from $f_\beta^\bd$ if no confusion occurs from context.

\subsection{A Lie algebra isomorphism}\label{Sec:LAIso}

Fix $\bc=\{c_1,\ldots,c_t\}$ as in previous section. For convenience denote $c_0:=0$. We define a map $\sigma:[n]\to [n+t]$ mapping $1,2,\ldots,n$ to
$$c_0+1,\ldots,c_1,c_1+2,\ldots,c_2+1,c_2+3,\ldots,c_k+k-1,c_k+k+1,\ldots,c_{k+1}+k,\ldots,n+t.$$
Here the missing elements of the image of $\sigma$ in $[n+t]$ are $c_1+1,c_2+2,\ldots,c_t+t$.

We define a map $\psi_\bc:\Phi^+\to\wt{\Phi}^+$ by: for $\beta=k_1\alpha_1+\ldots+k_n\alpha_n\in\Phi^+$ with $1\leq \ell\leq n$ the maximal index with $k_\ell\neq 0$, 
$$\psi_\bc(\beta):=\sum_{i=1}^{\ell-1} k_i (\wt{\alpha}_{\sigma(i)}+\ldots+\wt{\alpha}_{\sigma(i+1)-1})+k_\ell\wt{\alpha}_{\sigma(\ell)}.$$
Note that if $\beta=\alpha_k$ is a simple root, then $\psi_\bc(\alpha_k)=\wt{\alpha}_{\sigma(k)}$.

\begin{remark}\label{Rem:Epsilon}
This formulae simplifies if one denotes the positive roots in terms of the standard basis $\{ \ve_i \}$ as in Section \ref{Subsec:ABCD}:
\begin{enumerate}
\item Type $\tt A_n$: $\psi_\bc(\ve_i-\ve_{j+1})=\ve_{\sigma(i)}-\ve_{\sigma(j)+1}$.
\item Type $\tt B_n$: $\psi_\bc(\ve_i-\ve_{j+1})=\ve_{\sigma(i)}-\ve_{\sigma(j)+1}$; $\psi_\bc(\ve_i)=\ve_{\sigma(i)}$; $\psi_\bc(\ve_i+\ve_j)=\ve_{\sigma(i)}+\ve_{\sigma(j)}$.
\item Type $\tt C_n$: $\psi_\bc(\ve_i-\ve_{j+1})=\ve_{\sigma(i)}-\ve_{\sigma(j)+1}$; $\psi_\bc(2\ve_i)=2\ve_{\sigma(i)}$; $\psi_\bc(\ve_i+\ve_j)=\ve_{\sigma(i)}+\ve_{\sigma(j)}$.
\item Type $\tt D_n$: $\psi_\bc(\ve_i-\ve_{j+1})=\ve_{\sigma(i)}-\ve_{\sigma(j)+1}$; $\psi_\bc(\ve_i+\ve_n)=\ve_{\sigma(i)}+\ve_{\sigma(n)}$; $\psi_\bc(\ve_{n-1}+\ve_n)=\ve_{\sigma(n-1)}+\ve_{\sigma(n)}$; $\psi_\bc(\ve_i+\ve_j)=\ve_{\sigma(i)}+\ve_{\sigma(j)}$. In this case note that $\bc\subseteq[n-3]$ implies that $\sigma(n-2)=n+t-2$, $\sigma(n-1)=n+t-1$ and $\sigma(n)=n+t$.
\end{enumerate}
\end{remark}

The following lemma follows directly from Remark \ref{Rem:Epsilon}.

\begin{lemma}\label{Lem:RootLength}
The map $\psi_\bc$ is well-defined and it preserves the root length.
\end{lemma}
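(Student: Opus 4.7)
The plan is to verify both assertions type-by-type using the $\ve$-basis descriptions collected in Remark \ref{Rem:Epsilon}, since in each classical type the positive roots of $\wt\g$ admit the same combinatorial description (differences $\ve_i-\ve_j$, plus possibly $\ve_i\pm\ve_j$, $\ve_i$ or $2\ve_i$ entries) relative to the larger index set $[n+t]$.

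First I would check that the abstract formula defining $\psi_\bc$ in terms of the simple-root expansion $\beta=\sum k_i\alpha_i$ agrees with the $\ve$-basis formulas listed in Remark \ref{Rem:Epsilon}. This is a direct telescoping computation: substituting $\wt\alpha_{\sigma(i)}+\ldots+\wt\alpha_{\sigma(i+1)-1}=\ve_{\sigma(i)}-\ve_{\sigma(i+1)}$ along the long tail, and the appropriate expression for $\wt\alpha_{\sigma(\ell)}$ for the final (possibly special) simple root in types $\tt B, \tt C, \tt D$, then collecting terms reproduces exactly the $\ve$-basis image predicted by the remark. As a by-product, this shows the image is a non-negative integer combination of simple roots of $\wt\g$.

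Next, to prove well-definedness I would use the fact that $\sigma:[n]\to[n+t]$ is strictly order-preserving. Then $i<j$ forces $\sigma(i)<\sigma(j)$, so each $\ve$-expression $\ve_{\sigma(i)}-\ve_{\sigma(j)+1}$, $\ve_{\sigma(i)}+\ve_{\sigma(j)}$, $\ve_{\sigma(i)}$ or $2\ve_{\sigma(i)}$ sits in the positive root system of $\wt\g$ of the prescribed classical type. The only delicate point occurs in type $\tt D$: one must know that $\psi_\bc$ maps the two ``fork'' roots $\ve_{n-1}+\ve_n$ and $\ve_i+\ve_n$ to legitimate roots of $\wt\g$ of type $\tt D_{n+t}$, and this is guaranteed by the standing assumption $\bc\subseteq[n-3]$, which by the definition of $\sigma$ forces $\sigma(n-2)=n+t-2$, $\sigma(n-1)=n+t-1$, $\sigma(n)=n+t$, so that the fork is preserved.

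Finally, root-length preservation follows immediately once well-definedness is established: in the $\ve$-basis the inner product is the standard one on $\mathbb{R}^{n+t}$, and the $\ve_{\sigma(i)}$ appearing in $\psi_\bc(\beta)$ are distinct by injectivity of $\sigma$, so $(\psi_\bc(\beta),\psi_\bc(\beta))=(\beta,\beta)$; in particular the short/long distinction in types $\tt B, \tt C$ is preserved. The main obstacle is merely the case-by-case bookkeeping in type $\tt D$, where the exceptional root $\ve_{n-1}+\ve_n$ and simple root $\alpha_n$ require a separate check; all remaining verifications reduce to a one-line computation per type.
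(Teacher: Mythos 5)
Your proof is correct and follows essentially the same route as the paper, which simply cites Remark \ref{Rem:Epsilon} (the $\ve$-basis formulas for $\psi_\bc$) as immediately implying the lemma; you fill in the telescoping computation that verifies those formulas, the monotonicity of $\sigma$ for well-definedness, and the distinctness of the $\ve_{\sigma(i)}$ for length preservation, including the type $\tt D$ fork handled by $\bc\subseteq[n-3]$.
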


Let $\wt{\Phi}_\bc^+$ denote the image of $\psi_\bc$.

\begin{lemma}\label{Lem:Closed}
Let $\beta_1,\beta_2\in\Phi^+$. Then $\psi_\bc(\beta_1)+\psi_\bc(\beta_2)\in\wt{\Phi}^+$ implies that $\beta_1+\beta_2\in\Phi^+$ and $\psi_\bc(\beta_1)+\psi_\bc(\beta_2)=\psi_\bc(\beta_1+\beta_2)\in\wt{\Phi}_\bc^+$. In particular, the subset $\wt{\Phi}_\bc^+$ is closed in $\wt{\Phi}^+$ in the sense of \cite[Section 26.3]{Hum}.
\end{lemma}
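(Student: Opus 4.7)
The plan is to work entirely in the $\ve_i$-notation of Remark \ref{Rem:Epsilon}, in which $\psi_\bc$ becomes the substitution that sends a ``left'' index $i$ (appearing as $+\ve_i$) to $\sigma(i)$, and a ``right'' index $j+1$ (appearing as $-\ve_{j+1}$ in difference roots) to $\sigma(j)+1$. The single numerical fact that drives everything is the following: for indices $1 \leq i < j \leq n$, one has $\sigma(i)+1 = \sigma(j)$ if and only if $j=i+1$ and $i\notin\bc$; equivalently, whenever $i\in\bc$, the value $\sigma(i)+1$ equals one of the ``missing'' positions $c_k+k$ and therefore does \emph{not} lie in the image of $\sigma$. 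I would record this as an explicit preliminary lemma.

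Next I would proceed by case analysis on the root types of $\beta_1$ and $\beta_2$. In each classical type, the pairs of positive roots whose sum is again a root are completely classified and small in number, and by Lemma \ref{Lem:RootLength} the length behaviour is preserved under $\psi_\bc$, so a short$+$short$=$long case in $\wt\g$ forces the same configuration in $\g$. For each configuration one substitutes the $\ve_i$-expressions to compute $\psi_\bc(\beta_1)+\psi_\bc(\beta_2)$ as a formal $\ve$-combination, and then asks when this can be a positive root of $\wt\g$. The preliminary fact above forces the required index coincidence to have already occurred in $\g$, yielding both $\beta_1+\beta_2\in\Phi^+$ and the equality $\psi_\bc(\beta_1)+\psi_\bc(\beta_2)=\psi_\bc(\beta_1+\beta_2)$.

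As a representative model case, in type $\tt A_n$ with $\beta_1=\ve_a-\ve_b$ and $\beta_2=\ve_c-\ve_d$ (so $a<b$, $c<d$), one computes
$$\psi_\bc(\beta_1)+\psi_\bc(\beta_2)=\ve_{\sigma(a)}-\ve_{\sigma(b-1)+1}+\ve_{\sigma(c)}-\ve_{\sigma(d-1)+1}.$$
For this to be an element of $\wt\Phi^+$, two of the four $\ve$'s must cancel; the only options are $\sigma(b-1)+1=\sigma(c)$ (or the symmetric one). By the preliminary observation this forces $c=b$ and $b-1\notin\bc$, whence $\beta_1+\beta_2=\ve_a-\ve_d\in\Phi^+$ and the resulting expression $\ve_{\sigma(a)}-\ve_{\sigma(d-1)+1}$ is exactly $\psi_\bc(\beta_1+\beta_2)$. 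Types $\tt B_n$, $\tt C_n$, $\tt D_n$ are then handled by the analogous bookkeeping on the additional families ($\ve_i$, $2\ve_i$, $\ve_i+\ve_j$); in type $\tt D_n$ the restriction $\bc\subseteq[n-3]$ means $\sigma$ fixes the last three indices, so all cases involving $\ve_{n-1}$ and $\ve_n$ reduce to verifications that are essentially identical to the interior ones.

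Finally, the ``in particular'' closure statement is immediate: any two elements of $\wt\Phi_\bc^+$ are written as $\psi_\bc(\beta_1)$ and $\psi_\bc(\beta_2)$, and if their sum lies in $\wt\Phi^+$ the main part gives $\psi_\bc(\beta_1)+\psi_\bc(\beta_2)=\psi_\bc(\beta_1+\beta_2)\in\wt\Phi_\bc^+$. The main obstacle is entirely bookkeeping: the case analysis in types $\tt B_n$ and $\tt C_n$ splits into a moderate number of sub-cases (difference$+$difference, difference$+$sum, short$+$short producing a long root, etc.), each of which must be checked separately, and in type $\tt D_n$ one must additionally keep track of the interaction between $\alpha_{n-1}$ and $\alpha_n$ through the branching node. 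No genuinely new ideas enter beyond the preliminary observation on $\sigma(i)+1$; the difficulty is purely one of exhaustive verification.
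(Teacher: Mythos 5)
Your proposal follows the paper's approach: both work in the $\ve$-notation of Remark \ref{Rem:Epsilon}, hinge on the observation that $\sigma(j)+1$ lies in the image of $\sigma$ (necessarily as $\sigma(j+1)$) precisely when $j\notin\bc$, and verify the claim by case analysis on root configurations in one representative type before appealing to analogous bookkeeping for the rest. The only differences are presentational: you isolate the key numerical fact about $\sigma$ as an explicit preliminary lemma and illustrate with type $\tt A_n$, whereas the paper applies it implicitly and spells out type $\tt B_n$, which has more sub-cases and is therefore the more informative model.
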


\begin{proof}
This statement holds by the formulae in Remark \ref{Rem:Epsilon}. We prove it in type $\tt B_n$, and all other types can be proved in a similar way.

From Remark \ref{Rem:Epsilon}, it is clear that $\psi_\bc(\beta_1)+\psi_\bc(\beta_2)\in\wt{\Phi}^+$ if and only if one of the following conditions are fulfilled
\begin{enumerate}
\item[-] $\beta_1=\ve_i-\ve_{j+1}$, $\sigma(j)+1=\sigma(j+1)$, $\beta_2=\ve_{j+1}-\ve_k$;
\item[-] $\beta_1=\ve_i-\ve_{j+1}$, $\sigma(j)+1=\sigma(j+1)$, $\beta_2=\ve_{j+1}$;
\item[-] $\beta_1=\ve_i-\ve_{j+1}$, $\sigma(j)+1=\sigma(j+1)$, $\beta_2=\ve_{j+1}+\ve_k$ for $k>j+1$;
\item[-] $\beta_1=\ve_i-\ve_{j+1}$, $\sigma(j)+1=\sigma(j+1)$, $\beta_2=\ve_s+\ve_{j+1}$ for $s<j+1$ with $i\neq s$.
\end{enumerate}
In all these cases $\beta_1+\beta_2\in\Phi^+$ and $\psi_\bc(\beta_1)+\psi_\bc(\beta_2)=\psi_\bc(\beta_1+\beta_2)$.
\end{proof}

We define 
$$\wt{\n}_-^\bc:=\sum_{\wt{\beta}\in\wt{\Phi}^+_\bc}(\wt{\n}_-)_{-\wt{\beta}}$$
as the sum of root subspaces $(\wt{\n}_-)_{-\wt{\beta}}$ of weight $-\wt{\beta}$ where $\wt{\beta}$ runs through $\wt{\Phi}_\bc^+$. According to Lemma \ref{Lem:Closed}, $\wt{\n}_-^\bc$ is a Lie algebra.

\begin{proposition}\label{Prop:LAIso}
For any $\bd\in\mathrm{relint}(F^\bc)$, there exists a Lie algebra isomorphism 
$$\sigma_\bd:\n_-^\bd\cong \wt{\n}_-^\bc.$$
\end{proposition}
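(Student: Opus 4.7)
The plan is to construct $\sigma_\bd$ by mapping weight basis vectors: set $\sigma_\bd(f_\beta^\bd):=\epsilon(\beta)\,\wt{f}_{\psi_\bc(\beta)}$ for a sign function $\epsilon\colon\Phi^+\to\{\pm 1\}$ to be chosen, where $\wt{f}_{\wt\beta}$ denotes a Chevalley root vector of $\wt{\g}$. Since $\psi_\bc$ is a bijection $\Phi^+\xrightarrow{\sim}\wt{\Phi}_\bc^+$ (Lemma \ref{Lem:RootLength} and Remark \ref{Rem:Epsilon}), this is a linear isomorphism $\n_-^\bd\xrightarrow{\sim}\wt{\n}_-^\bc$ for any choice of $\epsilon$; the content of the proposition is then that some $\epsilon$ also matches the Lie brackets on both sides.

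The comparison of brackets splits into two issues. For the vanishing: in $\n_-^\bd$ one has $[f_{\beta_1}^\bd,f_{\beta_2}^\bd]\neq 0$ iff $\beta_1+\beta_2\in\Phi^+$ and $d_{\beta_1}+d_{\beta_2}=d_{\beta_1+\beta_2}$, while in $\wt{\n}_-^\bc\subseteq\wt{\n}_-$ one has $[\wt{f}_{\psi_\bc(\beta_1)},\wt{f}_{\psi_\bc(\beta_2)}]\neq 0$ iff $\psi_\bc(\beta_1)+\psi_\bc(\beta_2)\in\wt{\Phi}^+$. I would show these two conditions coincide on $\mathrm{relint}(F^\bc)$. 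By Proposition \ref{Prop:Cone}(2), an inequality of $\mathcal{D}(\g)$ is strict at $\bd\in\mathrm{relint}(F^\bc)$ exactly when it appears in the (PA) list for $F^\bc$; working type-by-type with the explicit $\ve$-formulae of Remark \ref{Rem:Epsilon} and the gap property $\sigma(j+1)-\sigma(j)\in\{1,2\}$ (with value $2$ precisely when $j\in\bc$), one checks that a pair $(\beta_1,\beta_2)$ indexes a (PA) inequality iff $\psi_\bc(\beta_1)+\psi_\bc(\beta_2)$ has too many nonzero $\ve$-coordinates to lie in $\wt{\Phi}^+$. Conversely, equality for non-(PA) pairs forces $\psi_\bc(\beta_1)+\psi_\bc(\beta_2)=\psi_\bc(\beta_1+\beta_2)\in\wt{\Phi}_\bc^+$, which is the content of Lemma \ref{Lem:Closed}, and pairs with $\beta_1+\beta_2\notin\Phi^+$ are covered by the contrapositive of the same lemma.

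When both brackets are non-zero, what remains is matching the structure constants. Since $\psi_\bc$ preserves root lengths (Lemma \ref{Lem:RootLength}) and the rank-two subsystem spanned by $\beta_1,\beta_2$, the Chevalley constants $N_{\beta_1,\beta_2}$ and $\wt{N}_{\psi_\bc(\beta_1),\psi_\bc(\beta_2)}$ have the same absolute value. Matching them then amounts to a cocycle condition of the form $\epsilon(\beta_1)\epsilon(\beta_2)\,\wt{N}_{\psi_\bc(\beta_1),\psi_\bc(\beta_2)}=\epsilon(\beta_1+\beta_2)\,N_{\beta_1,\beta_2}$ on the positive roots, which one can solve inductively on the height: fix $\epsilon$ arbitrarily on simple roots and propagate along any chosen decomposition $\beta=\beta_1+\beta_2$, with consistency between different decompositions following because the Jacobi identity holds simultaneously in $\g$ and in $\wt{\g}$. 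I expect the main obstacle to be the combinatorial case analysis in the vanishing step, which for types $\tt B_n,\tt C_n,\tt D_n$ must track the interaction of $\sigma$ with the several families of ``barred'' roots appearing in the (PA) list; this is straightforward but tedious, so an alternative is to read off $\sigma_\bd$ directly from the standard matrix realisations of the classical Lie algebras.
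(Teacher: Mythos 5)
Your plan follows the same route as the paper: define $\sigma_\bd$ on root vectors via $\psi_\bc$, observe that it is a linear isomorphism since $\psi_\bc\colon\Phi^+\to\wt{\Phi}^+_\bc$ is a bijection, and then verify the bracket by a case analysis on whether $\beta_1+\beta_2\in\Phi^+$ and, if so, whether the degree condition $d_{\beta_1}+d_{\beta_2}=d_{\beta_1+\beta_2}$ holds. Your observation that vanishing on the graded side is controlled exactly by (PA) strictness on $\mathrm{relint}(F^\bc)$, and that the nonvanishing case reduces to comparing Chevalley constants whose magnitudes agree because $\psi_\bc$ preserves root lengths and root strings, is precisely the paper's three-case argument (Lemma~\ref{Lem:Closed} for $\beta_1+\beta_2\notin\Phi^+$, a direct $\ve$-coordinate check for the strict-(PA) case, and the root-string formula for the equality case).

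The one place you go beyond the paper is the sign function $\epsilon$. The paper sends $f^\bd_\beta\mapsto f_{\psi_\bc(\beta)}$ with no sign adjustment and concludes from the magnitude match $r+1=r'+1$ alone; this implicitly assumes the Chevalley bases of $\g$ and $\wt{\g}$ have been coordinated so that the signs of the structure constants agree (which is always possible, e.g.\ by embedding $\g$ into $\wt{\g}$ through the obvious block inclusion of matrices, but the paper does not spell this out). Your explicit sign-twisting with the cocycle solved inductively on height is a cleaner way to make this rigorous without committing to a particular normalisation. One small caution: the phrase ``$\psi_\bc$ preserves the rank-two subsystem'' is a bit stronger than what you actually need or what is immediately obvious — what the paper actually verifies, by a short case check in types $\tt B$ and $\tt C$, is that the $\alpha$-string through $\beta$ has the same upper length $q$ before and after $\psi_\bc$ whenever both brackets are nonzero; you should phrase it that way, since $\psi_\bc$ is not an isometry and does not globally carry rank-two subsystems to isomorphic ones.
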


\begin{proof}
Recall that in the beginning of the paper we have fixed a Chevalley basis of $\g$. We define $\sigma_\bd$ to be the linear map sending $f_\beta^\bd$ to $f_{\psi_\bc(\beta)}$ for any $\beta\in\Phi^+$. It is clear that $\sigma_\bd$ is an isomorphism of vector spaces.

The study of the Lie bracket will be split into the following three cases:
\begin{enumerate}
\item Let $\alpha,\beta\in\Phi^+$ be such that $\alpha+\beta\notin\Phi^+$. In this case $[f_\alpha^\bd,f_\beta^\bd]=0$. By Lemma \ref{Lem:Closed}, $\psi_\bc(\alpha)+\psi_\bc(\beta)\notin\wt{\Phi}^+$ and hence $[f_{\psi_\bc(\alpha)},f_{\psi_\bc(\beta)}]=0$.
\item When $\alpha+\beta\in\Phi^+$ and $d_\alpha+d_\beta>d_{\alpha+\beta}$, the Lie bracket between the root vectors $f_\alpha^\bd$ and $f_\beta^\bd$ in $\n^\bd_-$ is zero. We show that $\psi_\bc(\alpha)+\psi_\bc(\beta)\notin\wt{\Phi}^+$ hence $[f_{\psi_\bc(\alpha)},f_{\psi_\bc(\beta)}]=0$. 

First note that since $\alpha+\beta\in\Phi^+$, we can assume that $\alpha=\ve_i-\ve_{j+1}$. From the definition of the Dynkin cones $F^\bc$, we have $j\in\bc$. After applying $\psi_\bc$, by Remark \ref{Rem:Epsilon}, $\psi_\bc(\alpha)=\ve_{\sigma(i)}-\ve_{\sigma(j)+1}$. If $\psi_\bc(\alpha)+\psi_\bc(\beta)$ were a positive root, $\psi_\bc(\beta)$ must contain the summand $\ve_{\sigma(j)+1}$ with a positive coefficient, this is not possible since those summands in $\psi_\bc(\beta)$ with positive coefficients are all of form $\ve_{\sigma(k)}$ but it follows from $j\in\bc$ that  $\sigma(j)+1\neq \sigma(j+1)$.

\item It remains to consider the case when $\alpha+\beta\in\Phi^+$ and $d_\alpha+d_\beta=d_{\alpha+\beta}$.

According to the property of the Chevalley basis, if $\beta-r\alpha,\ldots,\beta+q\alpha$ with $q,r\geq 0$ is the $\alpha$-chain through $\beta$, then $[f_\alpha^\bd,f_\beta^\bd]=(r+1)f_{\alpha+\beta}^\bd$, where 
$$r+1=\frac{q(\alpha+\beta,\alpha+\beta)}{(\beta,\beta)}.$$
Similarly the Lie bracket of the two Chevalley basis elements 
$$[f_{\psi_\bc(\alpha)},f_{\psi_\bc(\beta)}]=(r'+1)f_{\psi_\bc(\alpha+\beta)},$$
where $\psi_\bc(\beta)-r'\psi_\bc(\alpha),\ldots,\psi_\bc(\beta)+q'\psi_\bc(\alpha)$ is the $\psi_\bc(\alpha)$-chain through $\psi_\bc(\beta)$ and
$$r'+1=\frac{q'(\psi_\bc(\alpha+\beta),\psi_\bc(\alpha+\beta))}{(\psi_\bc(\beta),\psi_\bc(\beta))}.$$
If $q=q'$ holds, then $r+1=r'+1$ as by Lemma \ref{Lem:RootLength}, $\psi_\bc$ preserves the root lengths. 

To show that $q=q'$ we split into two cases. First notice that: $1\leq q\leq 2$ and $q=2$ (that is to say, $\beta+\alpha$ and $\beta+2\alpha$ are both positive roots) if and only if $\g$ is of type $\tt B_n$, $\beta=\ve_i-\ve_{j+1}$ and $\alpha=\ve_{j+1}$, or $\g$ is or type $\tt C_n$, $\beta=2\ve_{j+1}$ and $\alpha=\ve_i-\ve_{j+1}$. This happens only when $\alpha$ and $\beta$ have different root lengths.

If $q=1$, then $\alpha$ and $\beta$ have the same length, again by Lemma \ref{Lem:RootLength}, $\psi_\bc(\alpha)$ and $\psi_\bc(\beta)$ have the same length, therefore $q'\leq 1$. It remains to show that $\psi_\bc(\alpha)+\psi_\bc(\beta)\in\wt{\Phi}^+$. Since $\alpha+\beta\in\Phi^+$, we can assume that $\alpha=\ve_i-\ve_{j+1}$ and in view of the root length, $\beta=\ve_{j+1}+\ve_k$ for some $k\neq i, j+1$. From $d_\alpha+d_\beta=d_{\alpha+\beta}$ it follows $j\notin\bc$ and hence $\sigma(j)+1=\sigma(j+1)$. Therefore
$$\psi_\bc(\alpha)+\psi_\bc(\beta)=\ve_{\sigma(i)}-\ve_{\sigma(j)+1}+\ve_{\sigma(j+1)}+\ve_{\sigma(k)}=\ve_{\sigma(i)}+\ve_{\sigma(k)}\in\wt{\Phi}^+.$$

If $q=2$, in the two cases above, either $\alpha$ or $\beta$ is of form $\ve_i-\ve_{j+1}$. The same argument as in the case $q=1$ shows that $\sigma(j)+1=\sigma(j+1)$ and hence $\psi_\bc(\beta)+\psi_\bc(\alpha)$ and $\psi_\bc(\beta)+2\psi_\bc(\alpha)$ are in $\wt{\Phi}^+$ and $q'=2$.
\end{enumerate}

As conclusion, we have shown that $\sigma_\bd$ preserves the structure constants, it is therefore an isomorphism.
\end{proof}

\subsection{Monoid morphism}

We set $\Sigma:=\sigma([n])$ to be the image of $\sigma$.

\begin{proposition}\label{Prop:Psi}
There exists a unique monoid homomorphism $\Psi:\Lambda^+\to\wt{\Lambda}^+$ such that for any $\lambda\in\Lambda^+$,
$$(\Psi(\lambda),\psi_\bc(\beta)^\vee)=(\lambda,\beta^\vee) \ \ \text{and} \ \ (\Psi(\lambda),\wt{\alpha}_\ell^\vee)=0$$
for any $\ell\in[n+t]\setminus\Sigma$.
\end{proposition}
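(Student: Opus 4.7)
The plan is to define $\Psi$ explicitly on fundamental weights and extend additively, then verify the two stated conditions. Specifically, I would set $\Psi(\varpi_i) := \wt{\varpi}_{\sigma(i)}$ for each $i \in [n]$ and
\[
\Psi\Bigl(\sum_i m_i \varpi_i\Bigr) := \sum_i m_i \wt{\varpi}_{\sigma(i)};
\]
this is automatically a monoid homomorphism $\Lambda^+ \to \wt{\Lambda}^+$ since each $m_i \geq 0$. Uniqueness is immediate from the hypotheses: specialising the first identity to a simple root $\beta = \alpha_j$ gives $(\Psi(\lambda), \wt{\alpha}_{\sigma(j)}^\vee) = (\lambda, \alpha_j^\vee)$, and together with the vanishing condition on $\wt{\alpha}_\ell^\vee$ for $\ell \in [n+t] \setminus \Sigma$, this pins down all pairings of $\Psi(\lambda)$ against the simple coroots of $\wt{\g}$, hence $\Psi(\lambda)$ itself.

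For the explicit $\Psi$ above, the vanishing condition $(\Psi(\lambda), \wt{\alpha}_\ell^\vee) = 0$ for $\ell \notin \Sigma$ is immediate from $(\wt{\varpi}_{\sigma(i)}, \wt{\alpha}_\ell^\vee) = \delta_{\sigma(i),\ell}$ and the injectivity of $\sigma$. For the first identity with a general $\beta \in \Phi^+$, by linearity in $\lambda$ it suffices to prove
\[
(\wt{\varpi}_{\sigma(i)}, \psi_\bc(\beta)^\vee) = (\varpi_i, \beta^\vee) \quad \text{for all } i \in [n],\ \beta \in \Phi^+.
\]
Since $\psi_\bc$ preserves root lengths by Lemma \ref{Lem:RootLength}, rewriting both sides via $\gamma^\vee = 2\gamma/(\gamma,\gamma)$ reduces this to the bare pairing equality $(\wt{\varpi}_{\sigma(i)}, \psi_\bc(\beta)) = (\varpi_i, \beta)$ under the standard uniform normalisation of the invariant form on both ambient Euclidean spaces.

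I would verify this pairing equality type by type using the $\ve$-coordinate formulas in Remark \ref{Rem:Epsilon} together with the standard $\ve$-descriptions of fundamental weights (e.g.\ $\varpi_i = \ve_1 + \cdots + \ve_i$ in the non-spin cases, and the halved expressions for the spin weights). The decisive combinatorial input is that $\sigma: [n] \to [n+t]$ is strictly increasing, so $\sigma(k) + 1 \leq \sigma(i)$ if and only if $k + 1 \leq i$; this is exactly what is needed to reconcile the shift discrepancy $\sigma(j)+1 \neq \sigma(j+1)$ occurring when $j \in \bc$, for instance in pairings of $\varpi_i$ against roots of the form $\ve_k - \ve_{j+1}$ whose image under $\psi_\bc$ is $\ve_{\sigma(k)} - \ve_{\sigma(j)+1}$. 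The spin fundamental weights $\varpi_n$ in type $\tt B_n$ and $\varpi_{n-1}, \varpi_n$ in type $\tt D_n$ are handled separately, where the restrictions $\bc \subseteq [n-1]$ and $\bc \subseteq [n-3]$ force $\sigma$ to fix the last (one, respectively three) indices, as already noted in Remark \ref{Rem:Epsilon}, so the spin pairings transfer in the obvious way.

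The main obstacle, beyond identifying the correct $\Psi$, is the bookkeeping for the case-by-case pairing check in the non-simply-laced types, where one must be careful about the distinction between roots and coroots. However, the length-preservation in Lemma \ref{Lem:RootLength} turns this into a routine coordinate computation after the reduction above, so no genuine difficulty arises.
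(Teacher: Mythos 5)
Your proposal is correct and its skeleton matches the paper exactly: you arrive at the same definition of $\Psi$ (your $\Psi(\sum_i m_i\varpi_i)=\sum_i m_i\wt\varpi_{\sigma(i)}$ is the paper's $\Psi(\lambda)=\sum_j(\lambda,\alpha_j^\vee)\wt\varpi_{\sigma(j)}$), the same uniqueness argument from determining all simple-coroot pairings, and the same observation that the vanishing condition is trivial. The one place you diverge is in verifying $(\Psi(\lambda),\psi_\bc(\beta)^\vee)=(\lambda,\beta^\vee)$: you reduce via $\gamma^\vee=2\gamma/(\gamma,\gamma)$ and Lemma~\ref{Lem:RootLength} to the bare pairing $(\wt\varpi_{\sigma(i)},\psi_\bc(\beta))=(\varpi_i,\beta)$ and then propose a type-by-type check in $\ve$-coordinates, whereas the paper instead expands $\beta^\vee=\frac{1}{(\beta,\beta)}\sum_j k_j(\alpha_j,\alpha_j)\alpha_j^\vee$ and observes that the coefficient of $\wt\alpha_{\sigma(j)}$ in $\psi_\bc(\beta)$ equals the coefficient $k_j$ of $\alpha_j$ in $\beta$---this is immediate from the explicit simple-root expansion in the definition of $\psi_\bc$ and, together with $(\wt\alpha_{\sigma(j)},\wt\alpha_{\sigma(j)})=(\alpha_j,\alpha_j)$, closes the computation in one uniform stroke with no case distinctions. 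Your route does work (the monotonicity of $\sigma$ and the formulas in Remark~\ref{Rem:Epsilon} make the $\ve$-coordinate checks go through, including the spin weights where the constraints $\bc\subseteq[n-1]$ or $[n-3]$ ensure $\sigma$ fixes the tail), but the paper's coroot-expansion argument is shorter because it never leaves the simple-root coordinate system and so never has to confront the shift discrepancy $\sigma(j)+1\neq\sigma(j+1)$ or the normalisation of the invariant form.
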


\begin{proof}
First notice that the values of $\Psi(\lambda)$ on all simple coroots in $\wt{\g}$ are fixed by the two conditions above. The uniqueness then follows.

For $\lambda\in\Lambda^+$, we define
$$\Psi(\lambda):=\sum_{j=1}^n(\lambda,\alpha_{j}^\vee) \wt{\varpi}_{\sigma(j)}.$$
It follows from the definition that $\Psi$ is a monoid homomorphism. The second equality then holds since the fundamental weights and the simple roots are dual to each other. It remains to verify the first equality. Note that for $\beta=k_1\alpha_1+\ldots+k_n\alpha_n$, the coroot 
$$\beta^\vee=\frac{1}{(\beta,\beta)}\sum_{j=1}^n k_j(\alpha_j,\alpha_j)\alpha_j^\vee.$$
It then follows
\begin{eqnarray*}
(\Psi(\lambda),\psi_\bc(\beta)^\vee) &=& \sum_{j=1}^n (\lambda,\alpha_j^\vee) (\wt{\varpi}_{\sigma(j)},\psi_\bc(\beta)^\vee)\\
&=& \frac{1}{(\psi_\bc(\beta),\psi_\bc(\beta))}\sum_{j=1}^n (\lambda,\alpha_j^\vee)k_{\sigma(j)} (\wt{\alpha}_{\sigma(j)},\wt{\alpha}_{\sigma(j)})\\
&=& \frac{1}{(\beta,\beta)}\left(\lambda,\sum_{j=1}^n k_{j}(\alpha_j,\alpha_j)\alpha_j^\vee\right)\\
&=& (\lambda,\beta^\vee),
\end{eqnarray*}
where we have used the fact that $\psi_\bc$ preserves the root length, and $k_j=k_{\sigma(j)}$ follows from definition.
\end{proof}

\subsection{Weyl group element}

We study the Lie theoretic structure of $\wt{\Phi}_\bc^+$.

Let $w\in W(\wt{\g})$ be a Weyl group element. Denote 
$$\wt{\Phi}_w^+:=\{\wt{\beta}\in\wt{\Phi}^+\mid w(\wt{\beta})<0\}$$ 
the set of positive roots which are sent to negative by $w$. If $w=s_{i_1}\cdots s_{i_r}$ is a reduced decomposition of $w$, then 
\begin{equation}\label{Eq:Phiw}
\wt{\Phi}_w^+=\{\wt{\alpha}_{i_r}, s_{i_r}(\wt{\alpha}_{i_{r-1}}),\ldots,s_{i_r}\cdots s_{i_2}(\wt{\alpha}_{i_{1}})\}.
\end{equation}

We define $w_\bc\in W(\wt{\g})$ to be the Weyl group element
$$w_\bc:=w_{0,t} (s_{t+1}\cdots s_{c_t+t}s_t\cdots s_{c_t+t-1})\cdots (s_3\cdots s_{c_2+2}s_2\cdots s_{c_2+1})(s_2\cdots s_{c_1+1}s_1\cdots s_{c_1})$$
where $w_{0,t}$ is the longest element in the subgroup of $W(\wt{\g})$ generated by $s_{t+1},\ldots,s_{n+t}$. 

\begin{proposition}\label{Prop:WeylGroup}
We have: $\wt{\Phi}_{w_\bc}^+=\wt{\Phi}_\bc^+$.
\end{proposition}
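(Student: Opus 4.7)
The plan is to verify the equality directly, by realising $w_\bc$ as a concrete signed permutation of the ambient Euclidean space of $\wt{\h}^*$. Since $\psi_\bc$ is injective (Remark \ref{Rem:Epsilon}), one has $|\wt{\Phi}_\bc^+| = |\Phi^+|$, so it suffices to establish (i) $\ell(w_\bc) = |\Phi^+|$ and (ii) $w_\bc(\wt{\beta}) < 0$ for every $\wt{\beta} \in \wt{\Phi}_\bc^+$.

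For the geometric description, I would analyse the factors of $w_\bc$ one by one. Each block $u_k := s_{k+1}\cdots s_{c_k+k} s_k \cdots s_{c_k+k-1}$ acts as a short rotation of a consecutive segment of coordinates, and $w_{0,t}$ acts as the longest element of the parabolic subgroup generated by $s_{t+1},\ldots,s_{n+t}$ on a top block of coordinates (with appropriate sign changes in types $\tt B$, $\tt C$, $\tt D$). Composing the factors, $w_\bc$ turns out to be the signed permutation that pushes the coordinates indexed by $\Sigma = \sigma([n])$ into the top block and those indexed by the missing positions $M := \{c_k+k : 1 \leq k \leq t\}$ into a complementary bottom block, with the order inherited from $\sigma$ preserved on the top. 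Counting the inversions of this permutation then gives $\ell(w_\bc) = |\Phi^+|$.

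For (ii), each $\wt{\beta} \in \wt{\Phi}_\bc^+$ has a concrete shape in the $\ve$-basis by Remark \ref{Rem:Epsilon}, and one applies the signed permutation form of $w_\bc$ and checks by a short case analysis that the result is a negative root. In type $\tt A$, for instance, $\psi_\bc(\ve_i - \ve_{j+1}) = \ve_{\sigma(i)} - \ve_{\sigma(j)+1}$: the first term $\ve_{\sigma(i)}$ is sent by $w_\bc$ to a top-block coordinate, while $\ve_{\sigma(j)+1}$ is sent either to a bottom coordinate (if $j\in\bc$, so that $\sigma(j)+1\in M$) or to a top coordinate strictly below the image of $\ve_{\sigma(i)}$ (if $j\notin\bc$), yielding a negative root. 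The long roots in types $\tt B$, $\tt C$, $\tt D$ are handled analogously, the sign-changing part of $w_\bc$ ensuring that roots of the form $\ve_a+\ve_b$ are sent to negative roots as well.

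The main obstacle is that the product expression defining $w_\bc$ is typically not reduced: already in small examples $w_{0,t}$ cancels partially with the leftmost letters of $u_t$, so the length cannot be read off naïvely from the number of factors in the word via \eqref{Eq:Phiw}. One must therefore identify $w_\bc$ geometrically as a signed permutation rather than through manipulations of reduced words. Combined with the uniform handling of all four classical types --- the long roots $2\ve_i$ in type $\tt C$, the short roots $\ve_i$ in type $\tt B$, and the asymmetric role of $\alpha_{n-1},\alpha_n$ in type $\tt D$ (where the constraint $\bc \subseteq [n-3]$ forces $w_{0,t}$ to interact nontrivially with the last two simple roots) --- this is where the bulk of the computational effort lies.
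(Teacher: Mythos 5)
Your overall strategy matches the paper's: both proofs identify the explicit signed-permutation form of $w_\bc$ and check by a case analysis that $w_\bc$ sends every root in $\wt{\Phi}_\bc^+$ to a negative root, then close with a cardinality argument. There is, however, a neat shortcut the paper uses that you would do well to note: instead of computing $\ell(w_\bc) = |\Phi^+|$ exactly via inversion counting, the paper only records the upper bound $\ell(w_\bc)\leq |\Phi^+|$, which is immediate from writing $w_\bc$ as a word with exactly $|\Phi^+|$ letters (Lemma~\ref{Lem:RedDec} and Equation~\eqref{Eq:BCn}). Then the inclusion $\wt{\Phi}_\bc^+\subseteq\wt{\Phi}_{w_\bc}^+$ forces $|\Phi^+|=|\wt{\Phi}_\bc^+|\leq|\wt{\Phi}_{w_\bc}^+|=\ell(w_\bc)\leq|\Phi^+|$, giving equality for free (and proving the reducedness of that decomposition as a byproduct, Corollary~\ref{Cor:Reduced}). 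This spares you the inversion/sign-pair bookkeeping of a length formula for signed permutations. The paper also does not rederive the type-$\tt A$ signed permutation from scratch: Lemma~\ref{Lem:RedDec} rewrites $w_\bc$ in the form appearing in~\cite{CFFFR17}, and the explicit permutation formula (Lemma~\ref{Lem:wcTypeA}) and the type-$\tt A$ version of the proposition are quoted from there; the $\tt BC$ and $\tt D$ cases are then handled by prefixing a block of sign changes to the type-$\tt A$ computation. Your plan is viable, but where you write ``$w_\bc$ turns out to be the signed permutation that pushes\ldots'' you are deferring precisely the computation that the paper resolves through Lemmas~\ref{Lem:RedDec} and \ref{Lem:wcTypeA}, and the parity subtlety in type $\tt D$ (where $n$ odd sends $\ve_{t+1}\mapsto\ve_{n+t}$ rather than $-\ve_{n+t}$) needs explicit handling: the paper shows this only affects $j=n$ and uses the constraint $\bc\subseteq[n-3]$ to conclude.
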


The proof of the proposition will occupy the rest of this subsection. Before going to the proof we discuss a few consequences of the proposition which will be used later.

\begin{corollary}\label{Cor:NoSum}
Let $\wt{\beta}_1,\wt{\beta}_2\in\wt{\Phi}^+$ be two positive roots which are not in $\wt{\Phi}_\bc^+$. Then $\wt{\beta}_1+\wt{\beta}_2\notin\wt{\Phi}_{\bc}^+$.
\end{corollary}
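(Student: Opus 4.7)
The plan is to derive this as a direct consequence of Proposition \ref{Prop:WeylGroup} together with the standard fact that the complement of an inversion set of a Weyl group element is closed under root addition.

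More precisely, I would first invoke Proposition \ref{Prop:WeylGroup} to rewrite the hypothesis as $\wt\beta_1,\wt\beta_2 \in \wt\Phi^+ \setminus \wt\Phi^+_{w_\bc}$. By the very definition of $\wt\Phi^+_{w_\bc}$, this means $w_\bc(\wt\beta_1)$ and $w_\bc(\wt\beta_2)$ are both positive roots. The conclusion to prove is that $\wt\beta_1 + \wt\beta_2 \notin \wt\Phi^+_\bc = \wt\Phi^+_{w_\bc}$, and if $\wt\beta_1 + \wt\beta_2$ is not a root in the first place, then there is nothing to show. So I may assume $\wt\beta_1+\wt\beta_2 \in \wt\Phi$, and since the sum of two positive roots lies in the positive cone of the root lattice, $\wt\beta_1+\wt\beta_2 \in \wt\Phi^+$.

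Now I apply $w_\bc$ and use its linearity:
\[
w_\bc(\wt\beta_1 + \wt\beta_2) = w_\bc(\wt\beta_1) + w_\bc(\wt\beta_2).
\]
The left-hand side is a root (as $w_\bc$ permutes $\wt\Phi$), and the right-hand side is a sum of two positive roots, so it must itself be a positive root. Therefore $w_\bc(\wt\beta_1 + \wt\beta_2) > 0$, which means $\wt\beta_1+\wt\beta_2 \notin \wt\Phi^+_{w_\bc}$. Combined again with Proposition \ref{Prop:WeylGroup}, this gives $\wt\beta_1+\wt\beta_2 \notin \wt\Phi^+_\bc$, as desired.

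There is essentially no obstacle here since the whole statement reduces to the observation that the set of positive roots sent to negative roots by a fixed Weyl group element is a \emph{biconvex} subset of $\wt\Phi^+$; the only content one has to provide is the translation from the combinatorially defined set $\wt\Phi^+_\bc$ to the inversion set $\wt\Phi^+_{w_\bc}$, which is exactly what Proposition \ref{Prop:WeylGroup} supplies.
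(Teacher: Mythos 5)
Your proof is correct. It shares the key step with the paper — invoking Proposition \ref{Prop:WeylGroup} to identify $\wt{\Phi}_\bc^+$ with the inversion set $\wt{\Phi}_{w_\bc}^+$ — but then diverges in how it handles the remaining combinatorics. The paper simply cites the Theorem of Papi (characterizing inversion sets as exactly the biconvex subsets of $\wt\Phi^+$) to conclude that the complement of $\wt{\Phi}_{w_\bc}^+$ is closed under taking root sums. You instead supply a short self-contained argument: applying the linear map $w_\bc$ to $\wt\beta_1 + \wt\beta_2$ and observing that a root lying in the nonnegative span of simple roots must be positive. This is precisely the elementary direction of Papi's characterization, and proving it inline makes the argument independent of the reference. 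Both are sound; yours trades a citation for a two-line computation, which is a reasonable and arguably cleaner choice for so small a fact.
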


\begin{proof}
If $\wt{\beta}_1+\wt{\beta}_2\in\wt{\Phi}_{\bc}^+$, we need to show that either $\wt{\beta}_1$ or $\wt{\beta}_2$ is in $\wt{\Phi}_{\bc}^+$. This holds by the Theorem on \cite[Page 663]{Papi} because in the language of \emph{loc.cit}, Proposition \ref{Prop:WeylGroup} implies that $\wt{\Phi}_{\bc}^+$ is associated to $w_\bc$ and associated sets form convex sets of roots.
\end{proof}

Let $I(\infty)$ be the left ideal in $U(\wt{\n}_-)$ generated by $f_{\wt{\beta}}$ for $\wt{\beta}\in\wt{\Phi}^+\setminus\wt{\Phi}_{\bc}^+$.

\begin{corollary}\label{Cor:DirectSum}
We have $U(\wt{\n}_-)=U(\wt{\n}_-^\bc)\oplus I(\infty)$ as vector spaces.
\end{corollary}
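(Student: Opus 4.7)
The plan is to construct a linear projection $\pi: U(\wt{\n}_-) \to U(\wt{\n}_-^\bc)$ whose kernel is exactly $I(\infty)$; this would immediately yield the desired direct sum decomposition.

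First, I would introduce the complementary subalgebra. Let $W \subseteq \wt{\n}_-$ denote the span of the root vectors $f_{\wt{\gamma}}$ for $\wt{\gamma} \in \wt{\Phi}^+ \setminus \wt{\Phi}_\bc^+$. By Corollary \ref{Cor:NoSum}, the complement $\wt{\Phi}^+ \setminus \wt{\Phi}_\bc^+$ is closed under root addition, so $W$ is a Lie subalgebra of $\wt{\n}_-$ and $\wt{\n}_- = \wt{\n}_-^\bc \oplus W$ as vector spaces. Fixing a total ordering on the positive roots that places all elements of $\wt{\Phi}_\bc^+$ before those of $\wt{\Phi}^+ \setminus \wt{\Phi}_\bc^+$, the PBW theorem yields a linear isomorphism
\[
\mu: U(\wt{\n}_-^\bc) \otimes U(W) \xrightarrow{\sim} U(\wt{\n}_-), \qquad u \otimes v \mapsto uv.
\]
I would then set $\pi := (\mathrm{id} \otimes \epsilon) \circ \mu^{-1}$, where $\epsilon: U(W) \to \mathbb{C}$ is the augmentation. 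Since $\epsilon(1) = 1$, this $\pi$ restricts to the identity on $U(\wt{\n}_-^\bc)$ and is therefore a projection, giving $U(\wt{\n}_-) = U(\wt{\n}_-^\bc) \oplus \ker \pi$.

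The remaining step, and the most substantive one, is to show $\ker \pi = I(\infty)$. For the inclusion $I(\infty) \subseteq \ker \pi$ it suffices to verify $\pi(a \cdot f_{\wt{\gamma}}) = 0$ for arbitrary $a \in U(\wt{\n}_-)$ and $\wt{\gamma} \notin \wt{\Phi}_\bc^+$: writing $a = \sum_i u_i v_i$ via $\mu^{-1}$, we have $a f_{\wt{\gamma}} = \sum_i u_i (v_i f_{\wt{\gamma}})$, and since $W$ is a Lie subalgebra each $v_i f_{\wt{\gamma}}$ lies in $U(W)$ and is annihilated by $\epsilon$ (as $f_{\wt{\gamma}}$ lies in the augmentation ideal of $U(W)$). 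For the reverse inclusion $\ker \pi \subseteq I(\infty)$, any element of $\ker \pi$ decomposes under $\mu$ as $\sum_i u_i v_i$ with $v_i \in \ker \epsilon = U(W) \cdot W$, so it lies in $U(\wt{\n}_-^\bc) \cdot U(W) \cdot W = U(\wt{\n}_-) \cdot W = I(\infty)$.

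The main obstacle is verifying that right multiplication by $f_{\wt{\gamma}}$ preserves $U(W)$ — equivalently, that $\pi$ kills the generators of $I(\infty)$; this is exactly where Corollary \ref{Cor:NoSum}, a consequence of the convexity of the inversion set $\wt{\Phi}_{w_\bc}^+$ established in Proposition \ref{Prop:WeylGroup}, enters the proof.
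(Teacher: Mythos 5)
Your proof is correct, and it rests on exactly the same two ingredients as the paper's argument: the PBW theorem and Corollary~\ref{Cor:NoSum} (closedness of the complementary root set). The presentational difference is that you make the Lie subalgebra $W = \bigoplus_{\wt\gamma\notin\wt\Phi_\bc^+}\mathbb{C} f_{\wt\gamma}$ explicit and package the PBW statement as a tensor factorisation $U(\wt{\n}_-)\cong U(\wt{\n}_-^\bc)\otimes U(W)$, then exhibit a projection $\pi=(\mathrm{id}\otimes\epsilon)\circ\mu^{-1}$ with kernel $I(\infty)$; the paper instead works hands-on with PBW monomials, showing that right-multiplying any monomial by some $f_{\wt\beta_k}$ ($\wt\beta_k\notin\wt\Phi_\bc^+$) and re-expanding never produces a monomial supported only on $\wt\Phi_\bc^+$. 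Your route is slightly more structural and arguably cleaner — in particular it isolates the fact that $W$ is a Lie subalgebra (which the paper uses implicitly) and avoids the informal step about "no PBW monomial from $U(\wt{\n}_-^\bc)$ appearing" — but the mathematical content is the same, and both proofs invoke Corollary~\ref{Cor:NoSum} at precisely the same point, namely to control what happens when a root vector from outside $\wt\Phi_\bc^+$ is commuted.
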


\begin{proof}
Let $\wt{\beta}_1,\ldots,\wt{\beta}_N$ be an enumeration of positive roots in $\wt{\Phi}^+$ with 
$$\wt{\Phi}_\bc^+=\{\wt{\beta}_1,\ldots,\wt{\beta}_m\}.$$
It then follows from the PBW theorem that $U(\wt{\n}_-)=U(\wt{\n}_-^\bc)+I(\infty)$. To show that the sum is direct, take 
$$x=\sum_{k=m+1}^N x_kf_{\wt{\beta}_k}\in I(\infty)$$
where $x_k\in U(\wt{\n}_-)$ and $f_{\wt{\beta}_k}$ is the fixed root vector of weight $-\wt{\beta}_k$. By Corollary \ref{Cor:NoSum}, for any PBW monomial $f_{\wt{\beta}_1}^{s_1}\cdots f_{\wt{\beta}_N}^{s_N}$ appearing in $x_k$ with a non-zero coefficient, after having written $f_{\wt{\beta}_1}^{s_1}\cdots f_{\wt{\beta}_N}^{s_N}f_{\wt{\beta}_k}$ as a sum of the PBW basis, there is no PBW monomial from $U(\wt{\n}_-^\bc)$ appearing in it with a non-zero coefficient. This shows that for any $k=m+1,\ldots,N$, $x_k=0$ and hence the sum is direct.
\end{proof}

The proof of Proposition \ref{Prop:WeylGroup} is a case-by-case analysis, but the proofs for type $\tt B_n$, $\tt C_n$ and $\tt D_n$ rely on the results in type $\tt A_n$.

\subsubsection{Type $\tt A_n$}
We start from the case where $\g$ is of type $\tt A_n$, then $\wt{\g}$ is of type $\tt A_{n+t}$.

\begin{lemma}\label{Lem:RedDec}
For $1\leq k\leq t+1$, let
$$v_k:=(s_k\cdots s_{c_k+k-1})\cdots (s_k\cdots s_{c_{k-1}+k})$$
where $c_0:=0$ and $c_{t+1}:=n$. Then $w_\bc=v_{t+1}v_t\cdots v_1$.
\end{lemma}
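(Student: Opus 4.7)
The plan is to recognize $v_{t+1}$ as the initial segment of a standard "staircase" reduced expression for $w_{0,t}$, then peel off $v_t, v_{t-1}, \ldots, v_1$ successively using Coxeter (braid and commutation) relations.

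First, I would invoke the classical reduced expression
\[
w_{0,t}=(s_{t+1}\cdots s_{n+t})(s_{t+1}\cdots s_{n+t-1})\cdots(s_{t+1}s_{t+2})(s_{t+1})
\]
for the longest element of the parabolic subgroup $\langle s_{t+1},\ldots,s_{n+t}\rangle\cong S_{n+1}$ of $W(\wt\g)$. Comparing with the definition, $v_{t+1}=(s_{t+1}\cdots s_{n+t})(s_{t+1}\cdots s_{n+t-1})\cdots(s_{t+1}\cdots s_{c_t+t+1})$ is precisely the first $n-c_t$ blocks of this expression (using $c_{t+1}=n$). Hence I can factor $w_{0,t}=v_{t+1}\cdot\widetilde W_t$, where
\[
\widetilde W_t=(s_{t+1}\cdots s_{c_t+t})(s_{t+1}\cdots s_{c_t+t-1})\cdots(s_{t+1})
\]
consists of the remaining $c_t$ blocks. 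Writing $F_k:=(s_{k+1}\cdots s_{c_k+k})(s_k\cdots s_{c_k+k-1})$, the problem reduces to showing $\widetilde W_t\cdot F_tF_{t-1}\cdots F_1=v_tv_{t-1}\cdots v_1$.

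Next I would proceed by induction on $t$. The base case $t=0$ is trivial: $\bc=\emptyset$ forces $\wt\g=\g$, $\widetilde W_0$ and the $F_k$'s are empty, and $v_1$ equals the standard reduced expression for $w_0\in W(\g)$. For the inductive step, the key local identity is $\widetilde W_t\cdot F_t=v_t\cdot\widetilde W'_{t-1}$ as reduced words, where $\widetilde W'_{t-1}$ is a new "staircase remainder" at level $t-1$ that is designed to combine with $F_{t-1}\cdots F_1$ to reproduce the setup one level lower. The main manipulation is to absorb the leading run $(s_{t+1}\cdots s_{c_t+t})$ from $F_t$ into the tail of $\widetilde W_t$ using the braid relation $s_{t+1}s_ts_{t+1}=s_ts_{t+1}s_t$ together with the commutation relations $[s_i,s_j]=1$ for $|i-j|\geq 2$, shifting blocks of $\widetilde W_t$ that start with $s_{t+1}$ into blocks of $v_t$ that start with $s_t$.

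The main obstacle is identifying $\widetilde W'_{t-1}$ explicitly and verifying that the residual product $\widetilde W'_{t-1}F_{t-1}\cdots F_1$ matches the structure required to apply the induction hypothesis. This is combinatorially delicate: $\widetilde W_t$ carries $c_t$ blocks (all starting with $s_{t+1}$), whereas $v_t$ carries only $c_t-c_{t-1}$ blocks (starting with $s_t$), so one must show that exactly $c_t-c_{t-1}$ of the blocks of $\widetilde W_t$ merge with $F_t$ to produce $v_t$, while the remaining $c_{t-1}$ blocks survive the rewriting in precisely the shape of the next $\widetilde W'_{t-1}$ at level $t-1$.

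As a cleaner alternative, one can bypass the reduced word bookkeeping by checking that both sides define the same element of $S_{n+t+1}$ in one-line notation. Using the cycle identification $(s_a\cdots s_b)=(a,a+1,\ldots,b+1)$, the products on each side can be computed directly as compositions of cycles; the resulting permutation in both cases sends $\sigma(i)\mapsto\sigma(i)+1$ along the image of $\sigma$ (with appropriate "jumps" at the gaps $c_k+k$) and threads the skipped positions back through $\{1,\ldots,t\}$. This computational route is more elementary but requires careful tracking of the interaction between the gaps induced by $\bc$ and the cycle structure, especially to confirm that the total length equals $\binom{n+1}{2}=|\Phi^+|$ so that the candidate expression $v_{t+1}v_t\cdots v_1$ is reduced.
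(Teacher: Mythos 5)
Your overall strategy --- take the staircase reduced expression for $w_{0,t}$, factor out $v_{t+1}$ as its leading $n-c_t$ blocks, then peel off $v_t,\ldots,v_1$ by iterating --- is precisely the route taken in the paper. The place you flag as ``the main obstacle,'' namely identifying the residual word explicitly and controlling the block bookkeeping, is exactly where the paper inserts one clean observation that you have not found; without it, route (a) of your proposal has a genuine gap rather than a delicate-but-finishable computation.

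The observation is that after absorbing both sub-blocks of $F_t$, the residual factor is the longest element of a smaller parabolic, and a longest element can be re-expressed in whatever staircase form is convenient. Concretely, writing $w_{0,t}=v_{t+1}\cdot(s_{t+1}\cdots s_{c_t+t})(s_{t+1}\cdots s_{c_t+t-1})\cdots(s_{t+1})$, one computes
\begin{align*}
w_{0,t}\cdot(s_{t+1}\cdots s_{c_t+t}) &= v_{t+1}\,(s_{t+1}\cdots s_{c_t+t-1})\cdots(s_{t+1}s_{t+2})\,s_{t+1},\\
w_{0,t}\cdot F_t &= v_{t+1}\,\bigl[(s_{t+1}\cdots s_{c_t+t-1})\cdots(s_{t+1})\,(s_t\cdots s_{c_t+t-1})\bigr].
\end{align*}
The bracketed word has length $\binom{c_t+1}{2}$ and is a reduced expression for the longest element $w_0'$ of the parabolic subgroup $\langle s_t,\ldots,s_{c_t+t-1}\rangle\cong S_{c_t+1}$. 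Since $w_0'$ is a longest element, you may freely replace the bracket by the staircase based at $s_t$, namely $(s_t\cdots s_{c_t+t-1})(s_t\cdots s_{c_t+t-2})\cdots(s_t)$. This is exactly the canonical form of your ``staircase remainder,'' and it places you back in the original shape one rank lower: $w_0'$ plays the role of $w_{0,t}$, $F_{t-1}$ the role of $F_t$, and the next round produces $v_t$ together with the longest element of $\langle s_{t-1},\ldots,s_{c_{t-1}+t-2}\rangle$ in its staircase form based at $s_{t-1}$. Iterating yields $v_{t+1}v_t\cdots v_1$ with no ad hoc matching required. Recognizing the residual as a longest element is what turns your ``combinatorially delicate'' step into a one-line remark.

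Your alternative route (b), computing both sides as permutations in one-line notation, is a valid and genuinely different method that the paper does not use for this lemma (though it is close in spirit to the explicit formulas in Lemma~\ref{Lem:wcTypeA}). One correction to that route: you do \emph{not} need to confirm that $v_{t+1}\cdots v_1$ is reduced or that the total length is $|\wt{\Phi}^+|$ in order to prove this lemma. The statement is an equality of group elements, so equality of permutations suffices; reducedness of this expression is established separately in Corollary~\ref{Cor:Reduced} by comparing lengths with $\#\wt{\Phi}^+_{\bc}$, which relies on Proposition~\ref{Prop:WeylGroup} and is not available at the point this lemma is proved.
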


\begin{proof}
We consider the following reduced decomposition of $w_{0,t}$:
$$w_{0,t}^{\tt A_n}=(s_{t+1}\cdots s_{n+t})(s_{t+1}\cdots s_{n+t-1})\cdots (s_{t+1}s_{t+2})s_{t+1}.$$
It follows then
$$w_{0,t}^{\tt A_n} s_{t+1}\cdots s_{c_t+t}=v_t (s_{t+1}\cdots s_{c_t+t-1})\cdots (s_{t+1}s_{t+2})s_{t+1},$$
and hence 
$$w_{0,t}^{\tt A_n} (s_{t+1}\cdots s_{c_t+t}s_t\cdots s_{c_t+t-1})=v_{t+1} (s_{t+1}\cdots s_{c_t+t-1})\cdots (s_{t+1}s_{t+2})s_{t+1}(s_t\cdots s_{c_t+t-1}).$$
Note that on the right hand side, the part without $v_{t+1}$ is a reduced decomposition of the longest element in the Weyl group generated by $s_t,s_{t+1},\ldots,s_{t+c_t-1}$. It means that we can write 
$$w_{0,t}^{\tt A_n} (s_{t+1}\cdots s_{c_t+t}s_t\cdots s_{c_t+t-1})=v_{t+1} (s_t\cdots s_{c_t+t-1})\cdots (s_ts_{t+1})s_t.$$
The above procedure can be then iterated to conclude.
\end{proof}

The word $w_\bc$ coincides with the word appearing in \cite[Section 5.2]{CFFFR17}. It follows from \cite[Proposition 9]{CFFFR17} that $\wt{\Phi}_{w_\bc}^+=\wt{\Phi}_\bc^+$.

For simplicity we set $\ell_j:=\sigma(j)$ for $1\leq j\leq n$ and $\ell_0=0$. If $j\notin \bc$, then $\ell_{j+1}=\ell_j+1$; otherwise $\ell_{j+1}=\ell_j+2$. The permutation given by $w_\bc$ is explicitly described in the following lemma:

\begin{lemma}[{\cite[Proposition 6]{CFFFR17}}]\label{Lem:wcTypeA}
The following explicit formulae for $w_\bc$ hold:
\begin{enumerate}
\item If $\ell_j=\ell_{j-1}+1$, then $w_\bc(\ell_j)=\ell_j+(n-2j+2)$.
\item If $\ell_j=\ell_{j-1}+2$, then $w_\bc(\ell_j-1)=\ell_j-j$ and $w_\bc(\ell_j)=\ell_j+n-j+1$.
\end{enumerate}
\end{lemma}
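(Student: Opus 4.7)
The plan is to verify these formulas by direct computation using the reduced decomposition $w_\bc = v_{t+1} v_t \cdots v_1$ from Lemma \ref{Lem:RedDec}. The key observation is that each elementary factor $s_k s_{k+1} \cdots s_m$ acts on $[n+t+1]$ as a cyclic left-shift on the interval $[k, m+1]$, sending $m+1 \mapsto k$ and $j \mapsto j+1$ for $k \leq j \leq m$, while fixing everything else.

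First, I would derive a closed-form description of each $v_k$ by composing its $c_k - c_{k-1}$ elementary factors. A short induction on the number of factors applied shows that $v_k$ fixes all positions outside $[k, c_k+k]$, acts as the translation $p \mapsto p + (c_k - c_{k-1})$ on $[k, c_{k-1}+k]$, and acts as the reversal $p \mapsto 2k + c_k - p$ on $[c_{k-1}+k+1, c_k+k]$, using the convention $c_0 := 0$ and $c_{t+1} := n$. A useful sanity check here is that $v_1$ recovers the longest element of the symmetric group on $[1, c_1+1]$.

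Next, I would propagate each relevant position through the sequence $v_1, v_2, \ldots, v_{t+1}$. Writing $\ell_j = j + k - 1$, where $k$ is the unique index with $c_{k-1} < j \leq c_k$, one first checks that $v_{k'}$ fixes $\ell_j$ for every $k' < k$. The two cases in the statement then correspond precisely to whether $\ell_j$ enters the support of $v_k$ in its translation region or its reversal region, which is governed by the membership $j - 1 \in \bc$. If $j - 1 \notin \bc$, then $c_{k-1} + k < \ell_j$, so $v_k$ reflects $\ell_j$ to $k + c_k - j + 1$, and each subsequent $v_{k'}$ (for $k' = k+1, \ldots, t+1$) translates the current position by $c_{k'} - c_{k'-1}$, producing the final value $k + n - j + 1 = \ell_j + n - 2j + 2$. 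If $j - 1 = c_{k-1} \in \bc$, then $\ell_j - 1 = c_{k-1} + k - 1$ lies at the top of the reversal region of $v_{k-1}$ and is sent to $k - 1 = \ell_j - j$, which is subsequently fixed; meanwhile $\ell_j$ itself sits at the boundary of the translation region of $v_k$ and is translated repeatedly by the later $v_{k'}$'s to the value $n + k = \ell_j + n - j + 1$.

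The main obstacle is the careful bookkeeping needed to determine, at each stage, which regime (translation, reversal, or fixed) applies to the current position. A cleaner conceptual alternative would be to invoke Proposition \ref{Prop:WeylGroup}: since the inversion set $\wt\Phi^+_{w_\bc} = \wt\Phi^+_\bc$ determines $w_\bc$ uniquely as an element of the symmetric group on $[n+t+1]$, one could complete the partial permutation prescribed by the stated formulas into a full permutation and verify directly that its inversion set coincides with $\wt\Phi^+_\bc$ as described in Remark \ref{Rem:Epsilon}.
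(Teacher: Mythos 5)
The paper does not actually prove this lemma; it is quoted verbatim from \cite[Proposition~6]{CFFFR17}, so your task was to reconstruct the argument, which you have done correctly via direct computation from the reduced word $w_\bc = v_{t+1}v_t\cdots v_1$ of Lemma~\ref{Lem:RedDec}. Your closed-form description of $v_k$ (fixing outside $[k, c_k+k]$, translating by $c_k - c_{k-1}$ on $[k, c_{k-1}+k]$, reversing via $p \mapsto 2k+c_k - p$ on $[c_{k-1}+k+1, c_k+k]$) is correct and follows by a clean induction on the factors $s_k\cdots s_m$ applied right-to-left, and the subsequent propagation of positions $\ell_j$ (and $\ell_j - 1$ in case (2)) through $v_1,\dots,v_{t+1}$ reproduces both formulas.

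One small imprecision: you claim that $j-1\notin\bc$ forces $c_{k-1}+k < \ell_j$, so that $\ell_j$ lands in the reversal region of $v_k$. This is false exactly when $k=1$ and $j=1$, since then $c_0+1 = 1 = \ell_1$ and $\ell_1$ sits in the (one-point) translation region $\{1\}$ of $v_1$. The final answer is unaffected only because the translation and reversal formulas happen to agree at $p=k$ precisely when $c_{k-1}=0$, i.e.\ when $k=1$. It would be cleaner to note either that $j-1\notin\bc$ gives $j\ge c_{k-1}+2$ when $k\ge 2$, and to treat $k=1$ separately, or to observe directly that $v_k(\ell_j) = k + c_k - j + 1$ holds in both regimes for the pairs $(j,k)$ arising in case~(1). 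Your proposed alternative route (filling in the partial permutation and checking its inversion set against $\wt\Phi^+_\bc$, using Proposition~\ref{Prop:WeylGroup}) is not circular for type~$\tt A$, since that proposition is established there via a different citation from \cite{CFFFR17}, but it trades the bookkeeping of the direct computation for bookkeeping of inversions, and is unlikely to be shorter.
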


\subsubsection{Type $\tt BC_n$}\label{Sec:RDBCn}

In this case, we choose the decomposition for $w_{0,t}$ as:
$$w_{0,t}^{\tt BC_n}=s_{n+t}(s_{n+t-1}s_{n+t})\cdots (s_{t+1}\cdots s_{n+t})w_{0,t}^{\tt A_{n-1}},$$
where $w_{0,t}^{\tt A_{n-1}}$ is defined in the proof of Lemma \ref{Lem:RedDec}. The reducedness of the decomposition is not assumed and will be proved as a consequence of Proposition \ref{Prop:WeylGroup}. 

Since $c_t\leq n-1$, $c_t+t\leq n+t-1$, by Lemma \ref{Lem:RedDec}, $w_\bc$ can be written into
\begin{equation}\label{Eq:BCn}
w_\bc=s_{n+t}(s_{n+t-1}s_{n+t})\cdots (s_{t+1}\cdots s_{n+t})v_t\cdots v_1.
\end{equation}
It follows that $\ell(w_\bc)\leq n^2$, where $\ell$ is the length function on the Weyl group.

Since $\#\wt{\Phi}_\bc^+=n^2$, it remains to show that $\wt{\Phi}_\bc^+$ is a subset of $\wt{\Phi}_{w_\bc}^+$, i.e., $w_\bc$ sends all roots in $\wt{\Phi}_{\bc}^+$ to negative roots. 

The Weyl group of type $\tt BC_n$ acts on $\ve_1,\ldots,\ve_n$ as follows: the simple reflections $s_1,\ldots,s_{n-1}$ acts as in the $\tt A_{n-1}$ case; $s_n$ fixes $\ve_i$ for $i\neq n$ and sends $\ve_n$ to $-\ve_n$.

It then follows that 
$$s_{n+t}(s_{n+t-1}s_{n+t})\cdots (s_{t+1}\cdots s_{n+t})$$ 
sends $\ve_{t+1},\ldots, \ve_{n+t}$ to $-\ve_{n+t},\ldots,-\ve_{t+1}$, respectively.

We write again $\ell_j:=\sigma(j)$ for $1\leq j\leq n$ and have a closer look at $w_\bc(\ve_{\sigma(j)})$. If $j=n$, $\ell_n=\sigma(n)=n+t$ hence $v_t\cdots v_1$ will fix $\ve_{\sigma(n)}$ because the maximal index appearing in the simple reflections in $v_t\cdots v_1$ is at most $c_t+t-1\leq n-1+t-1=n+t-2$, hence $w_\bc(\ve_{\ell_n})=-\ve_{t+1}$ is a negative root.

When $1\leq j\leq n-1$, we can apply Lemma \ref{Lem:wcTypeA} for $\tt A_{n-1}$ and obtain that 
$$v_t\cdots v_1(\ve_{\ell_j})=\begin{cases} \ve_{\ell_j+n-2j+1}, & \text{if }\ell_j=\ell_{j-1}+1;\\ \ve_{\ell_j+n-j}, & \text{if }\ell_j=\ell_{j-1}+2,\end{cases}$$
where $\ell_0$ is set to be $0$. In order to show that $w_\bc(\ve_{\ell_j})$ is a negative root, in the first case, it suffices to show that $\ell_j+n-2j+1\geq t+1$. Indeed, notice that $\ell_j+n-2j+1$ is decreasing when $j$ increases from $1$ to $n$, hence its minimum is attained when $\ell_{n}-n+1=t+1$. For the second case, since $\ell_j\geq j+1$, $\ell_j-j+n\geq n+1> t+1$.

Since $w_\bc$ sends all $\ve_{\sigma(j)}$ to negative roots, it maps all $\ve_{\sigma(i)}+\ve_{\sigma(j)}$ to negative roots as well. 

As the word $w_\bc$ has the $\tt A_{n-1}$ word  $w_{0,t}^{\tt A_{n-1}}$ as suffix, by the description of $\wt{\Phi}^+_{w_\bc}$ in \eqref{Eq:Phiw}, the roots of form $\ve_{\sigma(i)}-\ve_{\sigma(j)+1}$ are in $\wt{\Phi}^+_{w_\bc}$.

As conclusion we have shown that in type $\tt B_n$, $\wt{\Phi}^+_\bc$ is a subset of $\wt{\Phi}^+_{w_\bc}$. The same proof applies to type $\tt C_n$, \emph{mutatis mutandis}.

\subsubsection{Type $\tt D_n$}\label{Sec:Dn}

In this case, we choose the decomposition for $w_{0,t}$ in the following way, according to the parity of $n$: when $n$ is even, $w_{0,t}^{\tt D_n}$ is
$$s_{n+t} (s_{n+t-2}s_{n+t-1})\cdots (s_{t+3}\cdots s_{n+t-2}s_{n+t})(s_{t+2}\cdots s_{n+t-1})(s_{t+1}\cdots s_{n+t-2}s_{n+t})w_{0,t}^{\tt A_{n-1}};$$
when $n$ is odd, $w_{0,t}^{\tt D_n}$ has the form
$$s_{n+t-1} (s_{n+t-2}s_{n+t})\cdots (s_{t+3}\cdots s_{n+t-2}s_{n+t})(s_{t+2}\cdots s_{n+t-1})(s_{t+1}\cdots s_{n+t-2}s_{n+t})w_{0,t}^{\tt A_{n-1}},$$
where $w_{0,t}^{\tt A_{n-1}}$ is defined in the proof of Lemma \ref{Lem:RedDec}. Again we do not need the reducedness of the decomposition in the proof.

Again since $c_t\leq n-3$, $c_t+t\leq n+t-3$ and we can apply Lemma \ref{Lem:RedDec} to write $w_{0,t}^{\tt A_{n-1}}$ into $v_t\cdots v_1$. It then follows that $\ell(w_\bc)\leq n(n-1)$ and again we show that $w_\bc$ maps all positive roots in $\wt{\Phi}^+_{\bc}$ to negative roots. The strategy is the same as the $\tt BC_n$ case.

First notice that for the same reason as in the $\tt BC_n$ case, all roots of form $\ve_{\sigma(i)}-\ve_{\sigma(j)+1}$ are in $\wt{\Phi}_{w_\bc}^+$.

When $n$ is even, it is straightforward to check that 
$$s_{n+t} (s_{n+t-2}s_{n+t-1})\cdots(s_{t+2}\cdots s_{n+t-1})(s_{t+1}\cdots s_{n+t-2}s_{n+t})(\ve_{t+k})=-\ve_{n+t-k+1},$$
and when $n$ is odd, the difference is that 
$$s_{n+t-1} (s_{n+t-2}s_{n+t})\cdots (s_{t+2}\cdots s_{n+t-1})(s_{t+1}\cdots s_{n+t-2}s_{n+t})(\ve_{t+k})=-\ve_{n+t-k+1}$$
holds only for $k=2,\ldots,n$, but sends $\ve_{t+1}$ to $\ve_{n+t}$.

If $n$ is even, the same argument as in the type $\tt BC_n$ shows that $w_\bc(\ve_{\sigma(j)})$ are of form $-\ve_{s}$ hence $w_\bc$ sends roots $\ve_{\sigma(i)}+\ve_{\sigma(n)}$ and the roots $\ve_{\sigma(i)}+\ve_{\sigma(j)}$ to negative roots. If $n$ is odd, what will cause problem is when $v_t\cdots v_1(\ve_{\sigma(j)})=\ve_{t+1}$ for some $j$. But from the argument in the type $\tt BC_n$, this will only happen when $j=n$. For the root $\ve_{\sigma(i)}+\ve_{\sigma(n)}$, $w_\bc$ sends $\ve_{\sigma(n)}$ to itself and $\ve_{\sigma(i)}$ to some $-\ve_s$ for $s<\sigma(n)=n+t$. This shows that $\wt{\Phi}_\bc^+$ is a subset of $\wt{\Phi}_{w_\bc}^+$ and the proof of Proposition \ref{Prop:WeylGroup} is then complete.

\begin{corollary}\label{Cor:Reduced}
The above decompositions of $w_\bc$ appeared in Lemma \ref{Lem:RedDec}, in Equation \eqref{Eq:BCn} and for type $\tt D_n$ (with $w_{0,t}^{\tt A_{n-1}}$ replaced by $v_t\cdots v_1$) are reduced.
\end{corollary}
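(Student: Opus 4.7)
The plan is a length-counting argument built on Proposition \ref{Prop:WeylGroup} together with the standard fact that a factorisation of $w\in W(\wt\g)$ as a product of simple reflections has at least $\ell(w)=\#\wt\Phi_w^+$ letters, with equality precisely when the factorisation is reduced.

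By Proposition \ref{Prop:WeylGroup}, $\ell(w_\bc)=\#\wt\Phi_\bc^+$. It therefore suffices, in each type, to verify that the displayed decomposition uses exactly $\#\wt\Phi_\bc^+$ simple reflections. For types $\tt B_n$, $\tt C_n$ and $\tt D_n$ this count is already embedded in the proof of Proposition \ref{Prop:WeylGroup}: the arguments in Sections \ref{Sec:RDBCn} and \ref{Sec:Dn} rest on the inequality $\ell(w_\bc)\leq n^2$ (respectively $\leq n(n-1)$) obtained by reading off the letters in \eqref{Eq:BCn} and its type-$\tt D_n$ analogue, and on the equalities $\#\wt\Phi_\bc^+=n^2$ (respectively $n(n-1)$) which follow from the positive-root descriptions in Section \ref{Subsec:ABCD}. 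Combined with Proposition \ref{Prop:WeylGroup}, each of these inequalities is forced to be an equality, so the decomposition has exactly $\ell(w_\bc)$ letters.

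For type $\tt A_n$, I would count the letters of $v_{t+1}v_t\cdots v_1$ directly. With the conventions $c_0:=0$ and $c_{t+1}:=n$, the factor $v_k$ is a concatenation of $c_k-c_{k-1}$ blocks whose lengths are $c_k,c_k-1,\ldots,c_{k-1}+1$, contributing $\binom{c_k+1}{2}-\binom{c_{k-1}+1}{2}$ simple reflections; summing over $k=1,\ldots,t+1$ telescopes to $\binom{n+1}{2}$, which equals $\#\wt\Phi_\bc^+=\#\Phi^+$ since $\psi_\bc$ is injective. Invoking again the basic length principle yields reducedness.

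I expect no real obstacle here, since the entire corollary is a bookkeeping observation extracted from work already done inside the proof of Proposition \ref{Prop:WeylGroup}; the only quantitative input is the elementary telescoping identity in type $\tt A_n$.
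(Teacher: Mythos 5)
Your proof is correct and follows the same length-counting argument as the paper: the number of letters in each decomposition is compared with $\ell(w_\bc) = \#\wt\Phi_{w_\bc}^+ = \#\wt\Phi_\bc^+ = \#\Phi^+$, the last equality holding because $\pi_\bc\circ\psi_\bc=\mathrm{id}$ makes $\psi_\bc$ injective. The paper states this in one line; your explicit telescoping count in type $\tt A_n$ and your appeal to the bounds $\ell(w_\bc)\leq n^2$ (resp.\ $n(n-1)$) already recorded inside the proof of Proposition~\ref{Prop:WeylGroup} simply make the bookkeeping visible.
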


\begin{proof}
It suffices to notice that the length of the decomposition equals to $\#\Phi^+$, which coincides with $\#\wt{\Phi}_\bc^+$ and hence equals to the length of $w_\bc$.
\end{proof}

\subsection{Reduction to fundamental weights}

Let $\bc$ be fixed as in previous subsections and $\bd\in\mathrm{relint}(F^\bc)$. We define $w_\bd:=w_\bc$. According to Lemma \ref{Lem:Face}, the isomorphic type of the Lie algebra $\n_-^\bd$ does not depend on $\bd$ as long as $\bd$ is chosen from $\mathrm{relint}(F^\bc)$. 

\subsubsection{$U(\n_-^\bd)$-module structure}\label{Sec:ModStr}
We start from defining a $U(\n_-^\bd)$-module structure on the Demazure module $\wt{V}_{w_\bc}(\Psi(\lambda))$. We temporarily write $\lambda':=\Psi(\lambda)$. The Demazure module admits the following description
$$\wt{V}_{w_\bd}(\lambda')=U(\wt{\mathfrak{b}}_+)\cdot v_{w_\bd(\lambda')}=w_\bd (w_\bd^{-1}U(\wt{\n}_+) w_\bd)\cdot v_{\lambda'}.$$
Since $v_{\lambda'}$ is a highest weight vector, all positive roots act by zero on it. That is to say, only those root vectors, whose weights are contained in $w_\bd^{-1}\wt{\Phi}^+\cap(-\wt{\Phi}^+)$, may act on $v_{\lambda'}$ in a non-zero way. 

We claim that $w_\bd^{-1}\wt{\Phi}^+\cap(-\wt{\Phi}^+)=-\wt{\Phi}_{w_\bd}^+$. Indeed, if a positive root $\wt{\beta}$ satisfies $w_\bd^{-1}(\wt{\beta})<0$, then $-w_\bd^{-1}(\wt{\beta})$ is a positive root in $\wt{\Phi}_{w_\bd}^+$. Moreover, if a positive root $\wt{\beta}$ satisfies $w_\bd(\wt{\beta})<0$, then $-\beta=w_\bd^{-1}(-w_\bd(\beta))$ is in the set on the left hand side.

By Proposition \ref{Prop:WeylGroup}, we have:
$$\wt{V}_{w_\bd}(\lambda')=w_\bd U(\wt{\n}_-^\bc)\cdot v_{\lambda'}$$
where the Lie algebra $\wt{\n}_-^\bc$ is defined in Section \ref{Sec:LAIso}. Let $\wt{V}_{w_\bd}^-(\lambda'):=U(\wt{\n}_-^\bc)\cdot v_{\lambda'}$. According to Proposition \ref{Prop:LAIso}, there exists a Lie algebra isomorphism $\sigma_\bd:\n_-^\bd\cong \wt{\n}_-^\bc$. We use this isomorphism to define a $U(\n_-^\bd)$-module structure on $\wt{V}_{w_\bd}^-(\lambda')$ by assigning for $x\in\n_-^\bd$, 
$$x\cdot v_{\lambda'}:=\sigma_\bd(x)\cdot v_{\lambda'},$$
and extends to $U(\n_-^\bd)$ by the universal property. Since $\wt{V}_{w_\bd}(\lambda')$ and $\wt{V}_{w_\bd}^-(\lambda')$ are isomorphic as vector spaces, we obtain a $U(\n_-^\bd)$-module structure on $\wt{V}_{w_\bd}(\lambda')$ by pulling back the one on $\wt{V}_{w_\bd}^-(\lambda')$.

\subsubsection{Ansatz}
The goal of this subsection is to reduce the proof of Theorem \ref{Thm:IsoModule} to the following proposition.

\begin{proposition}\label{Prop:Fundamental}
For the monoid homomorphism $\Psi$, the Weyl group element $w_\bd:=w_\bc\in W(\wt{\g})$, and any $1\leq k\leq n$, $V^\bd(\varpi_k)\cong \wt{V}_{w_\bd}(\Psi(\varpi_k))$ as $U(\n_-^\bd)$-modules.
\end{proposition}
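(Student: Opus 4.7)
The plan is to establish $V^\bd(\varpi_k) \cong \wt{V}_{w_\bd}(\Psi(\varpi_k))$ as $U(\n_-^\bd)$-modules by producing a surjection from left to right, and then forcing injectivity via Joseph's description of the defining ideal of a Demazure module. Both sides are cyclic $U(\n_-^\bd)$-modules: the left generated by $v_{\varpi_k}^\bd$, and the right, after identifying $\wt{V}_{w_\bd}(\Psi(\varpi_k))$ with $\wt{V}_{w_\bd}^-(\Psi(\varpi_k)) = U(\wt{\n}_-^\bc)\cdot v_{\Psi(\varpi_k)}$ as in Section~\ref{Sec:ModStr} and transported through the Lie algebra isomorphism $\sigma_\bd$ of Proposition~\ref{Prop:LAIso}, by $v_{\Psi(\varpi_k)}$.

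The first step is to define $\phi : V^\bd(\varpi_k) \to \wt{V}_{w_\bd}(\Psi(\varpi_k))$ by sending cyclic generator to cyclic generator, and to verify well-definedness. This amounts to checking that each element of the annihilator of $v_{\varpi_k}^\bd$ in $U(\n_-^\bd)$ is mapped by $\sigma_\bd$ into the annihilator of $v_{\Psi(\varpi_k)}$ in $U(\wt{\n}_-^\bc)$. The defining relations of $V^\bd(\varpi_k)$ arise as leading terms, with respect to the $\bd$-filtration, of the defining relations of $V(\varpi_k)$. The classical power relations $f_\beta^{(\varpi_k,\beta^\vee)+1}\cdot v_{\varpi_k}=0$ for $\beta\in\Phi^+$ pass to the graded module as $(f_\beta^\bd)^{(\varpi_k,\beta^\vee)+1}\cdot v_{\varpi_k}^\bd = 0$; under $\sigma_\bd$ they become $f_{\psi_\bc(\beta)}^{(\varpi_k,\beta^\vee)+1}\cdot v_{\Psi(\varpi_k)}$, which by Proposition~\ref{Prop:Psi} equals $f_{\psi_\bc(\beta)}^{(\Psi(\varpi_k),\psi_\bc(\beta)^\vee)+1}\cdot v_{\Psi(\varpi_k)}$, vanishing already in $\wt{V}(\Psi(\varpi_k))$ and hence in its Demazure submodule.

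For the reverse inclusion I would invoke Joseph's description of $\Ker\bigl(U(\wt{\n}_-)\twoheadrightarrow \wt{V}_{w_\bd}^-(\Psi(\varpi_k))\bigr)$: this kernel is generated by appropriate powers $f_{\wt\beta}^{k_{\wt\beta}+1}$ for $\wt\beta\in\wt{\Phi}^+_{w_\bd}$ together with the root vectors $f_{\wt\beta}$ for $\wt\beta\notin\wt{\Phi}^+_{w_\bd}$. Proposition~\ref{Prop:WeylGroup} identifies $\wt{\Phi}^+_{w_\bd}$ with $\wt{\Phi}^+_\bc$, while Corollary~\ref{Cor:DirectSum} provides the splitting $U(\wt{\n}_-)=U(\wt{\n}_-^\bc)\oplus I(\infty)$ that absorbs the second family of generators. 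Pulling back the remaining power generators along $\sigma_\bd^{-1}$ produces power relations in $U(\n_-^\bd)$ that already vanish on $v_{\varpi_k}^\bd$, and so the induced map $\wt{V}_{w_\bd}(\Psi(\varpi_k))\to V^\bd(\varpi_k)$ is inverse to $\phi$.

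The main obstacle, as I see it, is ensuring that the defining relations of $V^\bd(\varpi_k)$ really reduce to the naive power relations, without the abelianisation forcing additional, non-obvious straightening identities on the cyclic generator. The (DO) equalities built into the definition of the Dynkin cone $F^\bc$ are precisely what makes this go through: they enforce that products such as $f_\alpha f_\beta$ and $f_\gamma f_\delta$, which represent the same operator on $V(\varpi_k)$ when $\alpha+\beta=\gamma+\delta$ as weights, remain equal after taking the $\bd$-graded, preventing spurious new relations on $v_{\varpi_k}^\bd$. A case-by-case analysis across types $\tt A$, $\tt B$, $\tt C$, $\tt D$, tied to the explicit combinatorial models of the fundamental modules (exterior powers in type $\tt A$ and the analogous symplectic/orthogonal models in the other types), is expected to be needed to close this step, paralleling the analyses already carried out in Propositions~\ref{Prop:LAIso} and~\ref{Prop:WeylGroup}.
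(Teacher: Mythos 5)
Your plan reverses the direction of the argument relative to the paper in a way that hides, rather than resolves, the main difficulty. You propose first to produce a well-defined map $\phi: V^\bd(\varpi_k)\to\wt{V}_{w_\bd}(\Psi(\varpi_k))$ sending generator to generator, and to verify this by checking that the images under $\sigma_\bd$ of the naive power relations $(f_\beta^\bd)^{(\varpi_k,\beta^\vee)+1}$ annihilate $v_{\Psi(\varpi_k)}$. But well-definedness of $\phi$ requires the inclusion $\sigma_\bd(\mathrm{Ann}_{U(\n_-^\bd)} v_{\varpi_k}^\bd)\subseteq \mathrm{Ann}_{U(\wt{\n}_-^\bc)} v_{\Psi(\varpi_k)}$, and the annihilator on the left is an initial (associated-graded) ideal: it is generated by the leading forms of a generating set of $\mathrm{Ann}(v_{\varpi_k})$ only if that generating set is a Gr\"obner basis for the $\bd$-filtration, which is precisely what is \emph{not} known here. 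Verifying a proper sub-family of the annihilator does not give well-definedness. You acknowledge this in the last paragraph (``without the abelianisation forcing additional, non-obvious straightening identities''), but you then assert the (DO) equalities handle it and defer to ``a case-by-case analysis expected to be needed.'' That deferral is the entire content of the proposition: equivalently, since $\dim V^\bd(\varpi_k)=\dim V(\varpi_k)$ always holds for associated graded modules, what you are missing is exactly the inequality $\dim V(\varpi_k)\leq\dim\wt{V}_{w_\bd}(\Psi(\varpi_k))$.

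The paper avoids this circularity entirely. It already has, from Proposition~\ref{Prop:Ideal} (your ``Joseph'' step), the surjection in the \emph{other} direction $\wt{V}_{w_\bd}(\Psi(\varpi_k))\twoheadrightarrow V^\bd(\varpi_k)$, which is sound because Joseph's generators of the Demazure annihilator are explicitly known and can be pushed forward. To upgrade this surjection to an isomorphism, the paper then bounds the dimension of the Demazure module from above by $\dim V(\varpi_k)$ in Section~\ref{Sec:Fund}: for types $\tt A$ and $\tt C$ it uses the fact that $w_\bc$ is a triangular element, so $\wt{V}_{w_\bc}(\Psi(\lambda))$ has a basis indexed by a marked chain polytope that matches the FFLV count of $\dim V(\lambda)$; for non-spin representations in types $\tt B$ and $\tt D$ it computes the extremal weight vector explicitly in the exterior-power model of the vector representation and bounds the Demazure module inside a span of pure wedges of controlled size; for spin representations it identifies the Demazure module with a full simple module of a Levi isomorphic to $\g$. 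None of this is a consequence of knowing the defining relations of $V^\bd(\varpi_k)$; on the contrary, the description of those relations (the paper's third announced theorem) is obtained only \emph{after} Proposition~\ref{Prop:Fundamental} and Theorem~\ref{Thm:IsoModule} are in place. Your sketch would need the conclusion in order to get started, so it cannot be completed as written; the concrete dimension count that the paper supplies is the missing ingredient.
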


The proof of the proposition will occupy the entire Section \ref{Sec:Fund}. Assuming the proposition, we complete the proof of Theorem \ref{Thm:IsoModule}.

\subsubsection{Defining relations}
We start from defining a surjective $U(\n_-^\bd)$-module map 
$$\vp_\lambda^-: \wt{V}^-_{w_\bd}(\Psi(\lambda))\to V^\bd(\lambda).$$
Once it has been defined, composing with the $U(\n_-^\bd)$-module isomorphism $\wt{V}^-_{w_\bd}(\Psi(\lambda))\cong \wt{V}_{w_\bd}(\Psi(\lambda))$ gives a surjective $U(\n_-^\bd)$-module map 
$$\vp_\lambda: \wt{V}_{w_\bd}(\Psi(\lambda))\to V^\bd(\lambda).$$

For simplicity we write again $\lambda':=\Psi(\lambda)$. Note that 
$$\wt{V}^-_{w_\bd}(\lambda')=U(\wt{\n}_-^\bc)\cdot v_{\lambda'} \cong U(\wt{\n}_-^\bc)/I^\bc_{\lambda'}$$ 
where $I^\bc_{\lambda'}:=\mathrm{Ann}_{U(\wt{\n}_-^\bc)}v_{\lambda'}$ is a left ideal in $U(\wt{\n}_-^\bc)$ and 
$$V^\bd(\lambda)=U(\n_-^\bd)\cdot v_\lambda^\bd=U(\n_-^\bd)/\mathrm{Ann}_{U(\n_-^\bd)}v_\lambda^\bd.$$
We denote $J_\lambda:=\sigma^{-1}_\bd(I^\bc_{\lambda'})$, where $\sigma_\bd^{-1}:\wt{\n}_-^\bc\to\n_-^\bd$ is naturally extended to an isomorphism of universal enveloping algebras $\sigma_\bd^{-1}:U(\wt{\n}_-^\bc)\cong U(\n_-^\bd)$.
\begin{proposition}\label{Prop:Ideal}
We have $J_\lambda\subseteq \mathrm{Ann}_{U(\n_-^\bd)}v^\bd_\lambda$.
\end{proposition}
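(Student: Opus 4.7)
The plan is to identify an explicit generating set for the left ideal $I^\bc_{\lambda'} = \mathrm{Ann}_{U(\wt{\n}_-^\bc)}(v_{\lambda'})$ and verify that each generator's image under $\sigma_\bd^{-1}$ annihilates $v_\lambda^\bd$. The natural candidates are the $\mathfrak{sl}_2$-power relations
\[
f_{\wt\beta}^{(\lambda', \wt\beta^\vee) + 1}, \qquad \wt\beta \in \wt\Phi^+_\bc,
\]
each of which kills $v_{\lambda'}$ by $\mathfrak{sl}_2$-theory applied to the triple attached to $\wt\beta$ (the vector $v_{\lambda'}$ has $\mathfrak{sl}_2$-weight $(\lambda', \wt\beta^\vee)$). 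I would invoke Joseph's defining-ideal description of Demazure modules (the reference cited in the introduction), adapted to the reduced setting via Proposition \ref{Prop:WeylGroup} (which identifies $\wt\Phi^+_\bc = \wt\Phi^+_{w_\bd}$) and Corollary \ref{Cor:NoSum} (which supplies the needed closedness of $\wt\Phi^+_\bc$), to conclude that these power relations generate $I^\bc_{\lambda'}$ as a left ideal in $U(\wt{\n}_-^\bc)$.

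Writing $\wt\beta = \psi_\bc(\beta)$ for $\beta \in \Phi^+$, the Lie algebra isomorphism $\sigma_\bd$ from Proposition \ref{Prop:LAIso} sends $f_{\wt\beta}$ to $f_\beta^\bd$, and Proposition \ref{Prop:Psi} gives $(\lambda', \wt\beta^\vee) = (\lambda, \beta^\vee)$. Hence
\[
\sigma_\bd^{-1}\bigl(f_{\wt\beta}^{(\lambda', \wt\beta^\vee) + 1}\bigr) \;=\; \bigl(f_\beta^\bd\bigr)^{(\lambda, \beta^\vee) + 1}.
\]
On the classical $\g$-module side, $v_\lambda \in V(\lambda)$ is a highest weight vector for the $\mathfrak{sl}_2$-triple of $\beta$ with $\mathfrak{sl}_2$-weight $(\lambda, \beta^\vee)$, so $f_\beta^{(\lambda, \beta^\vee)+1} v_\lambda = 0$ in $V(\lambda)$. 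This identity lies in $U(\n_-)^\bd_{\leq ((\lambda,\beta^\vee)+1)\,d_\beta} \cdot v_\lambda$, so passing to the associated graded yields $(f_\beta^\bd)^{(\lambda, \beta^\vee) + 1} \cdot v_\lambda^\bd = 0$ in $V^\bd(\lambda)$. Since every generator of $J_\lambda = \sigma_\bd^{-1}(I^\bc_{\lambda'})$ therefore annihilates $v_\lambda^\bd$, the containment $J_\lambda \subseteq \mathrm{Ann}_{U(\n_-^\bd)}(v_\lambda^\bd)$ follows.

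The main obstacle is justifying the first step — that the Joseph power relations really do generate $I^\bc_{\lambda'}$ in the reduced enveloping algebra $U(\wt{\n}_-^\bc)$ rather than in the larger $U(\wt{\n}_-)$. The direct-sum decomposition $U(\wt{\n}_-) = U(\wt{\n}_-^\bc) \oplus I(\infty)$ from Corollary \ref{Cor:DirectSum} should handle this: projecting a Joseph generating set for the full annihilator along this decomposition produces a generating set for $I^\bc_{\lambda'}$, and the Joseph generators indexed by $\wt\beta \in \wt\Phi^+_\bc$ already sit inside $U(\wt{\n}_-^\bc)$, so the projection leaves them unchanged. An alternative route would be to embed $V^\bd(\lambda) \hookrightarrow \bigotimes_k V^\bd(\varpi_k)^{\otimes n_k}$ and use Proposition \ref{Prop:Fundamental} factor-by-factor, though this requires an independent filtration-compatibility lemma comparing the standard $\bd$-filtration on $V(\lambda)$ with the one induced from the classical embedding $V(\lambda) \hookrightarrow \bigotimes V(\varpi_k)^{\otimes n_k}$.
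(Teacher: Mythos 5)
Your final step is sound: the classical relation $f_\beta^{(\lambda,\beta^\vee)+1}\cdot v_\lambda=0$ in $V(\lambda)$ passes to the associated graded and gives $(f_\beta^\bd)^{(\lambda,\beta^\vee)+1}\cdot v_\lambda^\bd=0$, and the identification $\sigma_\bd^{-1}\bigl(f_{\wt\beta}^{(\lambda',\wt\beta^\vee)+1}\bigr)=(f_\beta^\bd)^{(\lambda,\beta^\vee)+1}$ via Propositions~\ref{Prop:LAIso} and~\ref{Prop:Psi} is also correct. But the opening claim — that the bare powers $\{f_{\wt\beta}^{(\lambda',\wt\beta^\vee)+1}\mid\wt\beta\in\wt\Phi^+_\bc\}$ generate $I^\bc_{\lambda'}$ as a left ideal in $U(\wt\n_-^\bc)$ — is false, and the entirety of Lemma~\ref{Lem:main} and Section~\ref{Sec:LemmaProof} exists to handle what is missing. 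Joseph's generators live in $U(\wt\n_+)$ and annihilate the \emph{extremal} vector $v_{w_\bd(\lambda')}$; conjugating by $w_\bd$ lands them in $U(w_\bd^{-1}\wt\n_+ w_\bd)=U(\wt\n_-^\bc)\otimes U(\wt\n_+^\bc)$, not in $U(\wt\n_-)$, and after killing the $\wt\n_+^\bc$-part against $v_{\lambda'}$ the resulting generating set for $I^\bc_{\lambda'}$ is $U(\wt\n_+^\bc)\langle f_{\wt\beta}^{(\lambda',\wt\beta^\vee)+1}\rangle\cap U(\wt\n_-^\bc)$ — Equation~\eqref{Eq:Gen} — where the adjoint action of $\wt\n_+^\bc$ (the ``differential operators'') is essential. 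The powers alone are not enough: already in the fully abelianised $\tt A_2$ example with $\lambda=\varpi_1+\varpi_2$, the monomial ideal $(f_1^2,f_2^2,f_{12}^3)\subset\mathbb{C}[f_1,f_2,f_{12}]$ has codimension $12$, not $8=\dim V(\lambda)$; one needs $e$-derivatives such as $f_1f_{12}^2$. This is precisely what the introduction flags when it states the defining ideal is generated by $U(\n_+)\cdot f_\alpha^{\ell_\alpha}$, not by $f_\alpha^{\ell_\alpha}$.

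Your suggested repair via Corollary~\ref{Cor:DirectSum} — projecting a Joseph generating set along $U(\wt\n_-)=U(\wt\n_-^\bc)\oplus I(\infty)$ — is unsound on two counts. First, the decomposition concerns $U(\wt\n_-)$, whereas Joseph's generators are in $U(\wt\n_+)$ (and their $w_\bd$-conjugates land in $U(w_\bd^{-1}\wt\n_+ w_\bd)$, which differs from $U(\wt\n_-)$ by the $\wt\n_+^\bc$ factor). Second, and more fundamentally, projecting a generating set of a left ideal onto a vector-space complement does \emph{not} produce a generating set of the intersection ideal; elimination is genuinely lossy in this direction. The hard content the paper supplies, which your proposal has no substitute for, is Lemma~\ref{Lem:main}: for each element $e_{\wt\gamma_1}\cdots e_{\wt\gamma_m}f_{\wt\beta}^{(\lambda',\wt\beta^\vee)+1}\in U(\wt\n_-^\bc)$ with $\wt\gamma_i\in S_{\wt\beta}$, one constructs $e_{\gamma_i}\in U(\n_+)$ with $\sigma_\bd^{-1}\bigl(e_{\wt\gamma_1}\cdots e_{\wt\gamma_m}f_{\wt\beta}^{k}\bigr)\cdot v_\lambda^\bd=e_{\gamma_1}\cdots e_{\gamma_m}\,\sigma_\bd^{-1}(f_{\wt\beta}^{k})\cdot v_\lambda^\bd$, which vanishes by the $\mathfrak{sl}_2$ argument; establishing this requires a case-by-case $\bd$-degree analysis using the (DO) and (PA) constraints defining the Dynkin cone, and occupies all of Section~\ref{Sec:LemmaProof}. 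Finally, the alternative route you mention through Proposition~\ref{Prop:Fundamental} would be circular: in the paper that proposition is proved from Proposition~\ref{Prop:Ideal}, not the other way around.
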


Once this proposition is proved, the isomorphism $\sigma_\bd^{-1}$ will pass to the quotient and yields the desired surjective linear map $\vp_\lambda^-$. Since the $U(\n_-^\bd)$-module structure on $\wt{V}^-_{w_\bd}(\lambda')$ is given by $\sigma_\bd$, such a linear map is a $U(\n_-^\bd)$-module map.

\subsubsection{Proof of Proposition \ref{Prop:Ideal}}

We start from looking at the defining ideal of the Demazure module $\wt{V}_{w_\bd}(\lambda')$. By \cite{Jos}, it is an ideal in $U(\wt{\n}_+)$ generated by 
$$e_{\wt{\gamma}}^{-(w_\bd(\lambda'),\wt{\gamma}^\vee)+1},\text{ for } w_\bd^{-1}(\wt{\gamma})<0\text{ and }e_{\wt{\gamma}},\text{ for }w_\bd^{-1}(\wt{\gamma})>0.$$
For our purpose we twist the ideal to the highest weight vector: if $w_\bd^{-1}(\wt{\gamma})<0$,
$$U(\wt{\n}_+)e_{\wt{\gamma}}^{-(w_\bd(\lambda'),\wt{\gamma}^\vee)+1}v_{w_\bd(\lambda')}=w_\bd U(\wt{\n}_-^\bc)U(\wt{\n}_+^\bc)f_{-w_\bd^{-1}(\wt{\gamma})}^{-(\lambda',w_\bd^{-1}(\wt{\gamma})^\vee)+1}v_{\lambda'},$$
where $\wt{\n}_+^\bc$ is the sum of the root spaces in $\wt{\n}_+$ whose weight $\wt{\beta}$ satisfies $w_\bd(\wt{\beta})>0$. By Proposition \ref{Prop:WeylGroup}, the positive roots appearing as weights in $\wt{\n}_+^\bc$ are those in $\wt{\Phi}^+\setminus \wt{\Phi}_\bc^+$, hence by Corollary \ref{Cor:NoSum}, $\wt{\n}_+^\bc$ is a Lie subalgebra of $\wt{\n}_+$.
Notice that we have shown in Section \ref{Sec:ModStr} that $-\wt{\Phi}_{w_\bc}^+=w_\bd^{-1}\wt{\Phi}^+\cap(-\wt{\Phi}^+)$. Therefore the defining ideal of $\wt{V}_{w_\bd}^-(\lambda')$ in $U(\wt{\n}_-^\bc)$ is, up to intersecting with $U(\wt{\n}_-^\bc)$, generated by:
\begin{equation}\label{Eq:Gen}
U(\wt{\n}_+^\bc)\langle f_{\wt{\beta}}^{(\lambda',\wt{\beta}^\vee)+1}\mid \wt{\beta}\in\wt{\Phi}^+_\bc\rangle.
\end{equation}

Since $v_{\lambda'}$ is a highest weight vector, the action of $\wt{\n}_+^\bc$ on $f_{\wt{\beta}}^k$ is roughly given by derivation. The roots appearing in $\wt{\n}_+^\bc$ are those not in $\wt{\Phi}_\bc^+$, by Corollary \ref{Cor:NoSum}, for $\wt{\alpha}\notin\wt{\Phi}_\bc^+$ and $\wt{\beta}\in\wt{\Phi}^+_\bc$, if $\wt{\beta}-\wt{\alpha}\in\wt{\Phi}^+$, then it is contained in $\wt{\Phi}^+_\bc$. It follows then if  $[e_{\wt{\alpha}},f_{\wt{\beta}}^{(\lambda',\wt{\beta}^\vee)+1}]$ is contained in $U(\wt{\n}_-)$, then it is contained in $U(\wt{\n}_-^\bc)$. In general with the same argument, one shows that for any $f\in U(\wt{\n}_-^\bc)$, if $[e_{\wt{\alpha}},f]$ is contained in $U(\wt{\n}_-)$, then it is contained in $U(\wt{\n}_-^\bc)$.

To complete the proof of the proposition, it remains to show that there exists a subset $W$ of the generating set \eqref{Eq:Gen}, which generates the defining ideal of $\wt{V}_{w_\bd}^-(\lambda')$, such that for any $f\in W$, there exists an element $E\in U(\n_+)$ such that,
\begin{equation}\label{Eq:DiffOp}
\sigma_\bd^{-1}(f)\cdot v_\lambda^\bd=E \sigma_\bd^{-1}(f)\cdot v_\lambda^\bd
\end{equation}
holds. 

To construct $W$, we define for each $\wt\beta\in \wt\Phi_\bc^+$ the subset $S_{\wt\beta}\subset \wt\Phi^+\setminus \wt\Phi_\bc^+$ as follows:
$$S_{\wt\beta} = \{\wt\alpha\in \wt\Phi^+\setminus \wt\Phi_\bc^+ \mid -\wt\alpha + k \wt\beta \in \wt\Phi_\bc^+ \text{ for some } k\geq 1\}.$$
It is useful to note that $\wt\gamma_i\in S_{\wt\beta}\implies \supp\wt\gamma_i\subset \supp \wt\beta$.

The definition of $S_{\wt\beta}$ is so that the action of $e_{\wt\alpha}\in \wt\n^\bc_-$ on $f_{\wt\beta}^{(\lambda',\wt\beta^\vee)+1} \cdot v_{\lambda'}$ is non-zero only if $\wt\alpha\in S_{\wt\beta}$. Therefore we will take for $W$,
$$W = \{e_{\wt\gamma_1}\dots e_{\wt\gamma_m}f_{\wt\beta}^{(\lambda',\wt\beta^\vee)+1} \mid \wt\beta\in \wt\Phi^+_\bc,\ \wt\gamma_i\in S_{\wt\beta}\}\cap U(\wt\n^\bc_-).$$

It remains to verify that elements in $W$ fulfil \eqref{Eq:DiffOp}. We start with a partial section of $\psi_\bc$. 

Let $\pi_\bc:\wt{\Phi}^+\to Q_+:=\mathbb{N}\Phi^+$ be the map defined by:
$$\pi_\bc\left(\sum_{k=1}^{n+t}c_k\wt{\alpha}_k\right)=\sum_{k=1}^n c_{\sigma(k)}\alpha_k.$$
It follows directly from definition that $\pi_\bc\circ \psi_\bc=\mathrm{id}_{\Phi^+}$. Note that the image of $\pi_\bc$ is not necessarily in $\Phi^+$ but in the root monoid $Q_+$. The following lemma makes it precise:

\begin{lemma}\label{Lem:Trio}
For $\wt{\beta}\in\wt{\Phi}^+$, one and only one of the following statements is true:
\begin{enumerate}
\item $\pi_\bc(\wt{\beta})\in\Phi^+$;
\item $\g$ is of type $\tt B_n$, $\wt{\beta}=\wt{\alpha}_{\sigma(i)-1, \overline{\sigma(i)}}$ with $\sigma(i)-1\neq\sigma(i-1)$, then $\pi_\bc(\wt{\beta})=2\alpha_{i,n}\notin\Phi^+$;
\item $\g$ is of type $\tt D_n$, $\wt{\beta}=\wt{\alpha}_{\sigma(i)-1,\overline{\sigma(i)}}$ with $\sigma(i)-1\neq\sigma(i-1)$, then $\pi_\bc(\wt{\beta})=\alpha_{i,n-2}+\alpha_{i,\overline{n-1}}\notin\Phi^+$.
\end{enumerate}
\end{lemma}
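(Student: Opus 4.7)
The plan is to carry out a direct case-by-case computation across the four classical types. The map $\pi_\bc$ only records the coefficients of the simple roots $\wt{\alpha}_k$ with $k \in \Sigma := \sigma([n])$ and relabels them by $\sigma^{-1}$, so everything is determined by the expansion of a positive root $\wt{\beta}$ in the simple roots of $\wt{\g}$ together with the combinatorics of the missing indices $\{c_m + m : 1 \leq m \leq t\} = [n+t] \setminus \Sigma$.

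In type $\tt A_n$, every positive root of $\wt{\g}$ has the form $\wt{\alpha}_{p,q}$ with all simple-root coefficients equal to $1$. Restricting to $\Sigma$ and re-indexing via $\sigma^{-1}$ yields a consecutive sum $\alpha_i + \ldots + \alpha_j$, which is a positive root of $\g$ whenever the restriction is non-empty, so case (1) always holds.

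In types $\tt B_n$ and $\tt C_n$, the additional positive roots are the $\wt{\alpha}_{p, \overline{q}}$. Expanding them in simple roots via $\wt{\alpha}_{p, \overline{q}} = \wt{\alpha}_{p, n+t} + \wt{\alpha}_{q, n+t}$ in type $\tt B$, and $\wt{\alpha}_{p, \overline{q}} = \wt{\alpha}_{p, n+t} + \wt{\alpha}_{q, n+t-1}$ in type $\tt C$, I would split into four subcases according to whether each of $p$ and $q$ lies in $\Sigma$. In three of the subcases the image under $\pi_\bc$ is directly recognised as $\alpha_{i, \overline{j}}$ for indices $i, j$ determined by $\sigma^{-1}$, hence a positive root. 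The only exceptional subcase is $p = \sigma(i)-1$ missing and $q = \sigma(i)$: then the unique coefficient-$1$ index $p$ is deleted by $\pi_\bc$, leaving only the doubled tail. In type $\tt B_n$ this tail equals $2\alpha_i + \ldots + 2\alpha_n = 2\alpha_{i,n}$, which is twice a root and thus not a root, giving case (2). In type $\tt C_n$ the same calculation produces $2\alpha_i + \ldots + 2\alpha_{n-1} + \alpha_n = \alpha_{i, \overline{i}}$, which is a (long) positive root, so no exception arises.

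In type $\tt D_n$, the same template applies, with the extra remark that in the type $\tt D$ expansion of $\wt{\alpha}_{p, \overline{q}}$ the two fork simple roots $\wt{\alpha}_{n+t-1}$ and $\wt{\alpha}_{n+t}$ carry coefficient $1$ rather than $2$, and both lie in $\Sigma$ by the standing hypothesis $\bc \subseteq [n-3]$, which forces $\sigma(n-2) = n+t-2$, $\sigma(n-1) = n+t-1$, $\sigma(n) = n+t$. In the exceptional subcase $q = p+1$ with $p$ missing, the image becomes $2\alpha_i + \ldots + 2\alpha_{n-2} + \alpha_{n-1} + \alpha_n$, which splits as $\alpha_{i, n-2} + \alpha_{i, \overline{n-1}}$ and is not a root (the sum is $2\ve_i$), confirming case (3). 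The main obstacle is purely bookkeeping: verifying that the exceptional subcase is the unique situation in which $[p, q-1] \cap \Sigma = \emptyset$ after passage through $\pi_\bc$, matching the resulting doubled tails with the expressions $2\alpha_{i,n}$ and $\alpha_{i, n-2} + \alpha_{i, \overline{n-1}}$ stated in the lemma, and confirming that no other positive root in $\wt{\Phi}^+$ produces an image outside $\Phi^+$.
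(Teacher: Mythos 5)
Your proof takes essentially the same route as the paper's, which merely asserts the lemma ``follows from the definition of $\pi_\bc$''; you flesh out the definitional case check. The computations are correct where given: in types $\tt B$, $\tt D$ the doubled tail $\pi_\bc(\wt\alpha_{\sigma(i)-1,\overline{\sigma(i)}})$ is indeed $2\alpha_{i,n}$ resp.\ $2\alpha_i+\cdots+2\alpha_{n-2}+\alpha_{n-1}+\alpha_n=\alpha_{i,n-2}+\alpha_{i,\overline{n-1}}$ (both equal $2\ve_i$, not a root), while in type $\tt C$ the analogous expression is $2\alpha_i+\cdots+2\alpha_{n-1}+\alpha_n=\alpha_{i,\overline i}$, a long root, so no exception arises.

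There is, however, a real gap you hedge around but do not close. In type $\tt A$ you write that the restricted sum ``is a positive root of $\g$ whenever the restriction is non-empty, so case (1) always holds'' --- but the restriction \emph{can} be empty: if $\wt\beta=\wt\alpha_{c_m+m}$ is a simple root at a missing index, then $\pi_\bc(\wt\beta)=0\notin\Phi^+$ while cases (2) and (3) do not apply either, so none of the three alternatives holds. (Consecutive missing indices $c_m+m$ and $c_{m+1}+m+1$ differ by at least two, so a root supported entirely on missing indices must be simple; these simple roots are precisely the problematic $\wt\beta$.) This is arguably a defect in the lemma's statement, and it is harmless in the application --- the lemma is invoked only for $\wt\gamma\in S_{\wt\beta}$ with $\wt\beta\in\wt\Phi_\bc^+$, where one checks $\pi_\bc(\wt\gamma)\neq 0$ --- but a full proof should note and dispose of it rather than leave a dangling ``whenever''. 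A secondary, more cosmetic, imprecision: the ``three good subcases, one bad'' accounting does not line up with the split by $p\in\Sigma$ vs.\ $q\in\Sigma$. The exceptional situation is a sub-subcase of $(p\notin\Sigma,\ q\in\Sigma)$, namely $q=p+1$; the subcase $(p\notin\Sigma,\ q\in\Sigma,\ q>p+1)$ is unproblematic, and $(p\notin\Sigma,\ q\notin\Sigma)$ never fails because between two missing indices there is always a member of $\Sigma$. The conclusion is unaffected, but the bookkeeping as written does not cover all cases cleanly.
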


\begin{proof}
The Lemma follows from the definition of $\pi_\bc$.
\end{proof}

The following lemma shows that the elements in $W$ (after possibly rearranging the $e_{\wt\gamma_i}$ factors using a PBW argument) fulfil  \eqref{Eq:DiffOp}.

\begin{lemma}\label{Lem:main}
Fix $\wt\beta\in \wt{\Phi}^+$ and suppose $\wt{\gamma}_1,\ldots,\wt{\gamma}_m\in S_{\wt{\beta}}$ and let $e_{\gamma_i}:=e_{\pi_\bc(\wt\gamma_i)}$: here by $e_{\pi_\bc(\wt\alpha_{\sigma(i)-1, \overline{\sigma(i)}})}$ we mean $e^2_{\alpha_{i,n}}$ in type $\tt B$, and $e_{\alpha_{i,n-2}}e_{\alpha_{i,\overline{n-1}}}$ in type $\tt D$.
Then we have 
\begin{equation}
\sigma_\bd^{-1}(e_{\wt{\gamma}_1}\dots e_{\wt\gamma_m}f_{\wt\beta}^k)\cdot v_\lambda^\bd=e_{\gamma_1}\dots e_{\gamma_m} \sigma_\bd^{-1}(f_{\wt\beta}^k)\cdot v_\lambda^\bd
\end{equation}
after possibly rearranging the $e_{\wt\gamma_i}$ and $e_{\gamma_i}$.
\end{lemma}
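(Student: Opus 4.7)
The plan is to reduce both sides of the claimed identity to iterated adjoint actions on the highest weight vector, and then match these under the Lie algebra isomorphism $\sigma_\bd$ of Proposition~\ref{Prop:LAIso}. Since $v_{\lambda'}$ is annihilated by every positive root vector of $\wt{\g}$, repeated use of the identity $e \cdot Y \cdot v_{\lambda'} = [e, Y] \cdot v_{\lambda'}$ rewrites the left-hand side as
$$\sigma_\bd^{-1}\bigl(\mathrm{ad}(e_{\wt{\gamma}_1}) \cdots \mathrm{ad}(e_{\wt{\gamma}_m})(f_{\wt{\beta}}^k)\bigr) \cdot v_\lambda^\bd,$$
where $\sigma_\bd^{-1}$ is applied to the representative in $U(\wt{\n}_-^\bc)$ obtained after all commutators have been expanded. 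Analogously, $v_\lambda^\bd$ is killed by every element of $\n_+$, including the product operators $e_{\alpha_{i,n}}^2$ and $e_{\alpha_{i,n-2}} e_{\alpha_{i,\overline{n-1}}}$ that serve as $e_\gamma$ in the exceptional cases of Lemma~\ref{Lem:Trio}, so the right-hand side equals $\mathrm{ad}(e_{\gamma_1}) \cdots \mathrm{ad}(e_{\gamma_m})\bigl((f_\beta^\bd)^k\bigr) \cdot v_\lambda^\bd$. It therefore suffices to verify the claimed identity at the level of iterated commutators.

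Next I would argue by induction on $m$, with the base case $m = 1$ carrying the bulk of the work. Expanding $[e_{\wt{\gamma}}, f_{\wt{\beta}}^k]$ via the Leibniz rule yields a sum of products of powers of $f_{\wt{\beta}}$ with inserted commutators $[e_{\wt{\gamma}}, f_{\wt{\beta}}]$ and their higher iterates $[[e_{\wt{\gamma}}, f_{\wt{\beta}}], f_{\wt{\beta}}]$. In the non-exceptional case, where $\pi_\bc(\wt{\gamma})$ is a genuine root, each insertion is a scalar multiple of $f_{\wt{\beta}-\wt{\gamma}}$ or $f_{2\wt{\beta}-\wt{\gamma}}$, which by Lemma~\ref{Lem:Closed} and the definition of $S_{\wt{\beta}}$ lies in $\wt{\Phi}_\bc^+$; the full expression therefore stays in $U(\wt{\n}_-^\bc)$. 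Since $\sigma_\bd$ preserves root lengths (Lemma~\ref{Lem:RootLength}) and so preserves Chevalley structure constants, $\sigma_\bd^{-1}$ intertwines $[e_{\wt{\gamma}}, \cdot]$ on $\wt{\n}_-^\bc$ with $[e_\gamma, \cdot]$ on $\n_-^\bd$, yielding the identity. In the exceptional $\tt B$ and $\tt D$ cases, $e_\gamma$ is a product of two root vectors of shorter length, and one must match a single adjoint action of $e_{\wt{\gamma}}$ on the $\wt{\g}$-side with a double adjoint action on the $\g^\bd$-side. Here the differential operator equalities of type $d_{i,k}+d_{j,\ell}=d_{i,\ell}+d_{j,k}$ built into the definition of $F^\bc$ are decisive: they force the intermediate weighted degrees to be pairwise equal, so that the composed operator $\mathrm{ad}(e_\alpha)\mathrm{ad}(e_{\alpha'})\bigl((f_\beta^\bd)^k\bigr)$ survives the $\bd$-grading with exactly the scalar prescribed by the single commutator $\mathrm{ad}(e_{\wt{\gamma}})(f_{\wt{\beta}}^k)$ on the $\wt{\g}$-side.

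For the inductive step with $m \geq 2$, apply $\mathrm{ad}(e_{\wt{\gamma}_1})$ to the identity already known at level $m-1$. Corollary~\ref{Cor:NoSum} ensures that the resulting element remains in $U(\wt{\n}_-^\bc)$ modulo the annihilator of $v_{\lambda'}$, and the inductive hypothesis together with the base case transfers the identity to the $\g^\bd$-side. Reorderings of the $e_{\wt{\gamma}_i}$, with the corresponding reorderings of the $e_{\gamma_i}$, absorb any discrepancy coming from their mutual commutators, which vanish against the respective highest weight vectors; this is precisely the content of the \emph{after possibly rearranging} clause. I expect the main obstacle to be the exceptional $\tt B$ and $\tt D$ cases of the base step: matching a single long-root commutator on the $\wt{\g}$-side with a genuine double differentiation on the $\g^\bd$-side requires careful tracking of the higher Leibniz corrections $[[e,f],f]$ that appear when $k \geq 2$, together with a type-by-type verification that the DO equalities of $F^\bc$ deliver exactly the right structure constant.
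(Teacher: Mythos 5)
Your plan of converting to iterated adjoint actions and matching them under $\sigma_\bd$ follows the paper's outline, but the pivotal step — the assertion that ``$\sigma_\bd^{-1}$ intertwines $[e_{\wt\gamma},\cdot]$ on $\wt\n_-^\bc$ with $[e_\gamma,\cdot]$ on $\n_-^\bd$ because $\sigma_\bd$ preserves Chevalley structure constants'' — is not correct, and it is precisely where the substance of the proof lives. The map $\pi_\bc$ is not injective on $\wt\Phi^+$, so after the first application of some $e_{\wt\gamma_i}$ you produce factors $f_{\wt\delta}$ with $\wt\delta\in\wt\Phi_\bc^+$, $\wt\delta\neq\wt\gamma_j$, yet $\pi_\bc(\wt\delta)=\pi_\bc(\wt\gamma_j)$ for a later operator. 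On the $\wt\g$-side the commutator $[e_{\wt\gamma_j},f_{\wt\delta}]$ is then a positive root vector $e_{\wt\alpha}$ with $\wt\alpha\notin\wt\Phi_\bc^+$, which slides past the remaining $f$-factors and kills $v_{\lambda'}$. On the $\g$-side the corresponding commutator $[e_{\gamma_j},f_{\delta}]$ lands in $\mathfrak h$ and acts on $v_\lambda^\bd$ by a \emph{nonzero} scalar. These two outcomes are not matched by any Chevalley-basis argument; the only reason the $\g$-side term does not contribute is that it drops strictly in $\bd$-degree and therefore vanishes in the associated graded module. Establishing this strict drop requires the strict (PA) inequalities coming from $\bd\in\mathrm{relint}(F^\bc)$, which your proposal never invokes.

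Two further gaps. First, you use the (DO) equalities only in the exceptional $\tt B$/$\tt D$ cases, but they are needed throughout: without them the action of a single $e_\gamma$ on a monomial $f$ containing root vectors of several different weights would fail to be $\bd$-homogeneous, and the image in the associated graded would not even be a single monomial. (This is exactly how the paper uses $d_{1,l}+d_{j+1,l'}=d_{1,l'}+d_{j+1,l}$.) Second, the ``after possibly rearranging'' clause is not merely about commuting the $e$'s past each other: it is a substantive part of the argument. The ordering is chosen so that after the first block of differentiations the resulting monomial already lies in $U(\wt\n_-^\bc)$ and so that the problematic $\mathfrak h$-terms arise only in an isolated, controllable way. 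Asserting that the rearrangement ``absorbs discrepancies from mutual commutators which vanish against the highest weight vector'' is not enough — indeed on the $\g$-side the $e_{\gamma_i}$ do \emph{not} all commute (e.g.\ $[e_{\alpha_{1,j}},e_{\alpha_{j+1,n}}]=\pm e_{\alpha_{1,n}}$), even though on the $\wt\g$-side the corresponding $e_{\wt\gamma_i}$ do, which is another asymmetry your argument does not account for. Finally, for the exceptional $\tt B$/$\tt D$ cases you correctly flag the extra Leibniz terms such as $k(k-1)f^{k-2}_{\alpha_{1,\overline j}}f_{1,n}^2$, but you attribute their disappearance to (DO) equalities; in fact it is again the strict (PA) inequalities that make these terms of lower degree.
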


\begin{proof}
The proof of the Lemma bases on a case by case analysis and will be given in Section \ref{Sec:LemmaProof}.
\end{proof}

\subsubsection{Proof of Theorem \ref{Thm:IsoModule}}

Let $\lambda=\lambda_1\varpi_1+\ldots+\lambda_n\varpi_n$ and consider the following diagram of $U(\n_-^\bd)$-modules and homomorphisms:
\begin{center}\begin{tikzcd}
        V^\bd(\lambda) \arrow[r,two heads,"b"] \arrow[d,two heads,"a"] & (V^\bd(\varpi_1))^{\otimes\lambda_1}\otimes\ldots\otimes (V^\bd(\varpi_n))^{\otimes\lambda_n} \arrow[d,"\rotatebox{90}{\(\sim\)}"] \\
        \wt{V}_{w_\bd}(\Psi(\lambda)) \arrow[r,hook,"d"] & \wt{V}_{w_\bd}(\Psi(\varpi_1))^{\otimes\lambda_1} \otimes\ldots\otimes  \wt{V}_{w_\bd}(\Psi(\varpi_n))^{\otimes\lambda_n}.
    \end{tikzcd}\end{center}

We start from explaining the maps in the diagram: 
\begin{enumerate}
\item[-] $b$ is the projection onto the Cartan component of the tensor product of weighted PBW degenerate modules, the projection is a $U(\n_-^\bd)$-module homomorphism by \cite[Lemma 6.1]{FFL17};
\item[-] the vertical map on the right hand side is the isomorphism given by Proposition \ref{Prop:Fundamental};
\item[-] $d$ is the isomorphism onto the Cartan component of the tensor product of the Demazure modules: it is an isomorphism by the standard monomial theory \cite{LS};
\item[-] $a$ is the composition of $b$, the vertical map on the right hand side and $d^{-1}$ (defined on the Cartan component).
\end{enumerate}
From construction it follows that $a$ is a surjective $U(\n_-^\bd)$-module morphism. According to Proposition \ref{Prop:Ideal}, there exists a surjective $U(\n_-^\bd)$-module morphism 
$$\vp_\lambda:\wt{V}_{w_\bd}(\Psi(\lambda))\to V^\bd(\lambda),$$ 
and hence $a$ is an isomorphism.

\subsubsection{Proof of Corollary \ref{Cor:IsoProj}}

We keep writing $\lambda':=\Psi(\lambda)$ for short. The isomorphism $\vp_\lambda$ induces an isomorphism of projective spaces $\mathbb{P}(\wt{V}_{w_\bd}(\lambda'))\cong\mathbb{P}(V^\bd(\lambda))$.

Consider an open part $\wt{X}^\circ:=\wt{B}\cdot v_{\lambda'}$ of the Schubert variety $X_{w_\bd}(\lambda')$. We show that it is isomorphic to $N_-^{\bd}\cdot v_\lambda^\bd$. The corollary then follows by taking closure on both sides.

If a positive root $\wt{\gamma}\notin\wt{\Phi}_{w_\bd^{-1}}^+$, $e_{\wt{\gamma}}\cdot v_{w_\bd(\lambda')}=0$, hence
$$\wt{B}\cdot v_{\lambda'}=\prod_{\wt{\gamma}\in\wt{\Phi}_{w_\bd^{-1}}^+} U_{\wt{\gamma}}\cdot v_{w_\bd(\lambda')}=w_\bd\prod_{\wt{\beta}\in\wt{\Phi}_{w_\bd}^+}U_{-\wt{\beta}}\cdot v_{\lambda'},$$
where for the last equality we used the identity $w_\bd^{-1}\wt{\Phi}_{w_\bd^{-1}}^+=-\wt{\Phi}_{w_\bd}^+$. Since $\vp_\lambda$ is a $U(\n_-^\bd)$-module isomorphism, keeping in mind the $U(\n_-^\bd)$-module structure on $\wt{V}_{w_\bd}(\lambda')$, we obtain an isomorphism
$$w_\bd\prod_{\wt{\beta}\in\wt{\Phi}_{w_\bd}^+}U_{-\wt{\beta}}\cdot v_{\lambda'}\cong \prod_{\beta\in\Phi^+}U_{-\beta}\cdot v_\lambda^\bd=N^\bd_-\cdot v_\lambda^\bd.$$
The proof of the corollary is then complete.

\section{Proof of Lemma \ref{Lem:main}}\label{Sec:LemmaProof}

\subsection{Type \texorpdfstring{$\tt A$}{A}}

Suppose $\g$ is of type $\tt A_n$, and $\wt\g$ is of type $\tt A_{n+t}$. Without loss of generality (using the fact that $\wt\gamma_i\in S_{\wt\beta}\implies \supp\wt\gamma_i \subset \supp\wt\beta$ and reducing $n$ if necessary) we may assume $f_{\wt\beta} = f_{\wt\alpha_{1,n+t}}$. This yields 
$$\sigma_\bd^{-1}(f^k_{\wt\alpha_{1,n+t}}) = f^k_{\alpha_{1,n}}.$$

The condition $[e_{\wt\gamma_i},\wt f_{\wt{\alpha}_{1,n+t}}]\neq 0$ then implies that each $\wt\gamma_i$ is either of the form $\wt\alpha_{1,\sigma(j)+1}$ or $\wt\alpha_{\sigma(j)+1,n+t}$ for some $j\in \bc = \{c_1,\dots,c_t\}$ defining the Dynkin abelianisation. We may then rearrange these $e_{\wt\gamma_i}$ so that $\wt\gamma_1\preceq\wt\gamma_2\preceq \cdots\preceq \wt\gamma_m$ according to the total order 
    \[\wt\alpha_{1,1} \prec\wt\alpha_{1,2}\prec\ldots \prec\wt\alpha_{1,n+t}\prec\wt\alpha_{2,n+t}\prec\wt{\alpha}_{3,n+t}\prec\ldots\prec\wt\alpha_{n+t,n+t}.\]

Therefore, we can assume that the product of $e_{\wt{\gamma}_i}$ in the statement has the form
\begin{equation}\label{Eq:Product}
\left(\prod^{\uparrow}_{j\in\bc} e^{p_j}_{\wt\alpha_{1,\sigma(j)+1}}\right)\left(\prod^{\uparrow}_{j\in \bc}e^{q_j}_{\wt\alpha_{\sigma(j)+1,n+t}}\right) f_{\wt\alpha_{1,n+t}}^k,
\end{equation}
where $p_j,q_j\in\mathbb{N}$ and the products are executed with respect to the above total order. 

In view of $\pi(\wt\alpha_{\sigma(j)+1, n+t})= \alpha_{j+1, n}$, the claim is then that
\[\sigma_\bd^{-1}\left(\prod^{\uparrow}_{j\in\bc} e^{p_j}_{\wt\alpha_{1,\sigma(j)+1}}
\prod^{\uparrow}_{j\in \bc}e^{q_j}_{\wt\alpha_{\sigma(j)+1,n+t}} f_{\wt\alpha_{1,n+t}}^k\right)\cdot v_\lambda^\bd = \prod^{\uparrow}_{j\in\bc} e^{p_j}_{\alpha_{1,j}}\prod^{\uparrow}_{j\in \bc}e^{q_j}_{\alpha_{j+1}} f_{\alpha_{1,n}}^k\cdot v_\lambda^
\bd.\]

Again, since the monomials on both sides are acted on the highest weight vector $v_\lambda^\bd$, the $e_{\wt{\gamma}}$ will be treated as a differential operator on $f_{\wt\alpha_{1,n+t}}^k$, and from now on we will write $e_{\wt{\gamma}}\cdot f_{\wt\alpha_{1,n+t}}^k$ to emphasise this action without mentioning the action on $v_\lambda^\bd$.
    
From the choice of the Chevalley basis one has 
$$e_{\wt\alpha_{\sigma(j)+1, n+t}}\cdot f_{\wt\alpha_{1, l}}=\begin{cases} 
f_{\wt\alpha_{1, \sigma(j)}}, & \text{if }l = n+t;\\
0, & \text{otherwise.}
\end{cases}$$
Thus if we act only by the second factor in the product in \eqref{Eq:Product} consisting of $e_{\wt\alpha_{\sigma(j)+1,n+t}}$,\ we obtain
\[\prod^{\uparrow}_{j\in \bc}e^{q_j}_{\wt\alpha_{\sigma(j)+1,n+t}}\cdot f_{\wt\alpha_{1,n+t}}^k = \left(\prod^{\downarrow}_{j\in \bc}f^{q_j}_{\wt\alpha_{1, \sigma(j)}}\right)f_{\wt\alpha_{1,n+t}}^r =: \wt{f}\]
with $r:=k-(q_{c_1}+\ldots+q_{c_t})$. Note that in the product $\wt{f}$, we have ordered the terms so that $f_{\wt\alpha_{1,\sigma(j)}}$ appears before $f_{\wt\alpha_{1,\sigma(l)}}$ if $j>l$. We are able to do this because all these root vectors commute in $U(\wt{\g})$ (we may also use a PBW argument to enforce this order, at the cost of splitting the monomial into multiple terms).

With the same argument, we also have
\[\prod^{\uparrow}_{j\in \bc}e^{q_j}_{\alpha_{j+1,n}}\cdot f_{\alpha_{1,n}}^k = \left(\prod^{\downarrow}_{j\in \bc}f^{q_j}_{\alpha_{1, j}}\right)f_{\alpha_{1,n}}^r=:f\in U(\n_-).\]
Hence, all that remains to show is that
\[\sigma_\bd^{-1}\left(\prod^{\uparrow}_{j\in\bc} e^{p_j}_{\wt\alpha_{1,\sigma(j)+1}} \wt{f}\right)\cdot v_\lambda^\bd = \left(\prod^\uparrow_{j\in\bc}e_{\alpha_{1,j}}^{p_j}\right)f\cdot v_\lambda^\bd.\]

We note that on the left side, since we have conveniently ordered the factors in $\wt{f}$, the only non-trivial actions of $e_{\wt\alpha_{1,\sigma(j)+1}}$ are when it acts on $f_{\wt\alpha_{1, \sigma(l)}}$ for $l>j$, in which case $[e_{\wt\alpha_{1,\sigma(j)+1}},f_{\wt\alpha_{1, \sigma(l)}}] = f_{\wt\alpha_{\sigma(j+1), \sigma(l)}}$.

Note that since the differential operators are of form $e_{\wt\alpha_{1,\sigma(j)+1}}$ and root vectors in $\wt{f}$ are of form $f_{\wt\alpha_{1, \sigma(l)}}$, after action there will be no non-zero element in $\mathfrak{h}$ coming out. However this will not be the case if we look at the right hand side.

On the right side, the non-trivial actions of $e_{\alpha_{1,j}}$ are when it acts on $f_{\alpha_{1,l}}$ for $l\geq j$, in which case 
\[
[e_{\alpha_{1,j}},f_{\alpha_{1,l}}] = \begin{cases} f_{\alpha_{j+1,l}}, & \text{if }l>j;\\ h_{\alpha_{1, j}}, & \text{if }l=j. \end{cases}
\]

The similar argument as we deal with the second factor can be applied to show that if we gather all monomials in 
$$\left(\prod^\uparrow_{j\in\bc}e_{\alpha_{1,j}}^{p_j}\right)\cdot f$$
which is in $U(\n_-)$ (this is well-defined by PBW theorem), they correspond via $\sigma_\bd^{-1}$ precisely to the terms on the left.

The final piece of the argument is then to show that: (1). all above-mentioned monomials in $U(\n_-)$ have the same degree; (2). the monomials containing some $h_{\alpha_{1,j}}$ on the right (corresponding to $l=j$) have lower degree than the degree of the monomials in $U(\n_-)$, and hence vanish when they act on $v_\lambda^\bd$.

For the first point, we start by looking at the action of $e_{\alpha_{1,j}}$ on $f$: it acts as a derivation and sends $f_{\alpha_{1,l}}$ for $l>j$ to $f_{\alpha_{j+1,l}}$. If $f$ is a power of a single $f_{\alpha_{1,l}}$ then all monomials obtained from the action of $e_{\alpha_{1,j}}$ on $f$ have the same $\bd$-degree. Otherwise let $1\leq l<l'\leq n$ be such that $f_{\alpha_{1,l}}$ and $f_{\alpha_{1,l'}}$ appears in $f$ with positive exponents. The action of $e_{\alpha_{1,j}}$ on $f_{\alpha_{1,l}}$ decreases the degree by $d_{1,l}-d_{j+1,l}$; and the action of $e_{\alpha_{1,j}}$ on $f_{\alpha_{1,l'}}$ decreases the degree by $d_{1,l'}-d_{j+1,l'}$. From the defining (DO) equality $d_{1,l}+d_{j+1,l'}=d_{1,l'}+d_{j+1,l}$ of the Dynkin cone, they are the same. Since $l$ and $l'$ are arbitrary, we have proved that the action of $e_{\alpha_{1,j}}$ is homogeneous. Iterating this argument shows terminates the proof of the first point.

For the second point, we make use of the (PA) inequalities for $\bd\in\mathrm{relint}(F^\bc)$. Elements in $\mathfrak{h}$ acts as scalars on $v_\lambda^\bd$, hence we set degree $0$ to them. When $l>j$, the action of $e_{\alpha_{1,j}}$ on $f_{\alpha_{1,l}}$ will decrease the degree by $d_{1,l}-d_{j+1,l}$. However, when $e_{\alpha_{1,j}}$ acts on $f_{\alpha_{1,j}}$, the 
degree will decrease by $d_{1,j}$. Since $j\in\bc$, it follows by the definition of the Dynkin cones that
$$d_{1,j}>d_{1,l}-d_{j+1,l},$$
and hence when acted on $v_\lambda^\bd$, the monomials containing certain $h_{\alpha_{1,j}}$ can be neglected.

\subsection{Type \texorpdfstring{$\tt C$}{C}}

The Lie algebra $\g$ is of type $\tt C_n$, so $\wt\g$ is of type $\tt C_{n+t}$. When $\wt\beta = \wt\alpha_{i,j}$, we are in fact in the type $\tt A$ case.

So we assume $\wt\beta = \wt\alpha_{1,\overline{\sigma(j)}}$. Thus $\sigma_\bd^{-1}(f^k_{\wt\alpha_{1,\overline{\sigma(j)}}}) = f^k_{\alpha_{1,\overline j}}$. The condition $\wt\gamma_i\in S_{\wt\beta}$ then implies that each $\wt\gamma_i$ is of the form $\wt\alpha_{1,\sigma(l)+1}$, $\wt\alpha_{\sigma(j),\sigma(l)+1}$ (for $l\ge j$) or $\wt\alpha_{\sigma(l)+1, \overline{\sigma(j)}}$ (for $l< j$) for some $l\in\bc$. We rearrange these $e_{\wt\gamma_i}$ according to the order
\[\wt\alpha_1\prec\cdots \prec \wt\alpha_{n+t} \prec \wt\alpha_{\sigma(j),\sigma(j)+1}\prec \cdots \prec \wt\alpha_{\sigma(j), n+t}\prec \wt\alpha_{1,\overline{\sigma(j)}}\prec \cdots \prec \wt\alpha_{\sigma(j),\overline{\sigma(j)}}.\]

After picking this order for the differential operators, one proceeds just as in the type $\tt A$ case.

\subsection{Type \texorpdfstring{$\tt B$}{B}}

The Lie algebra $\g$ is of type $\tt B_n$, so $\wt\g$ is of type $\tt B_{n+t}$. If $\wt\beta = \wt\alpha_{i,j}$ for $j< n+ t$, then we reduce to the case of type $\tt A$.

If $\wt\beta = \wt\alpha_{i, n+t}$, we may shrink $n$ if necessary and work in a smaller Lie algebra in order to assume $\wt\beta = \wt\alpha_{1,n+t}$. Then the only new $e_{\wt\gamma}$'s to consider are those of the form $\wt\gamma = \wt\alpha_{1, \overline {\sigma(j)+1}}$ for some $j> 1$ in $\bc$. These act on $f^k_{\wt\alpha_{1,n+t}}$ to give a monomial in the root vectors $f_{\wt\alpha_{1,n+t}}$ and $f_{\wt\alpha_{1,\sigma(j)}}$. Thus we may proceed just as in the type $\tt A$ case if we put such $e_{\wt\gamma}$ factors to the rightmost, i.e., declare the order
 \[\wt\alpha_1 \prec\wt\alpha_{1,2}\prec \cdots \prec \wt\alpha_{1,n+t}\prec \wt\alpha_{2,n+t}\prec \cdots\prec \wt\alpha_{n+t} \prec \wt\alpha_{1,\overline{n+t}}\prec \cdots\prec \wt\alpha_{1,\overline 2}.\]

So we assume $\wt\beta = \wt\alpha_{1,\overline{\sigma(j)}}$. Thus $\sigma_\bd^{-1}(f^k_{\wt\alpha_{1,\overline{\sigma(j)}}}) = f^k_{\alpha_{1,\overline j}}$. The condition $\wt\gamma_i\in S_{\wt\beta}$ then implies that each $\wt\gamma_i$ is of the form $\wt\alpha_{1,\sigma(l)+1}$, $\wt\alpha_{\sigma(j),\sigma(l)+1}$ (for $l\ge j$), $\wt\alpha_{\sigma(j),\overline{\sigma(l)+1}}$ (for some $l\ge j$) or $\wt\alpha_{\sigma(l)+1, \overline{\sigma(j)}}$ (for $l< j$) for some $l\in\bc$. We rearrange $e_{\wt\gamma_i}$ according to the order
\begin{multline*}
\wt\alpha_1\prec \cdots \prec \wt\alpha_{n+t} \prec \wt\alpha_{\sigma(j),\sigma(j)+1}\prec \cdots \prec \wt\alpha_{\sigma(j), n+t}\prec\\ \wt\alpha_{\sigma(j), \overline{\sigma(j)+1}}\prec \cdots \prec \wt\alpha_{\sigma(j),\overline{n+t}}\prec\wt\alpha_{1,\overline{\sigma(j)}}\prec\cdots \prec \wt\alpha_{\sigma(j)-1,\overline{\sigma(j)}}.   
\end{multline*}

After picking this order, one proceeds just as in the type $\tt C$ case to see the result. The one main difference from the type $\tt C$ case is that if $\sigma(j)-1\notin \im\sigma$ (equivalently, if $j-1\in \bc$), then $\pi_\bc(\wt\alpha_{\sigma(j)-1,\overline{\sigma(j)}}) = 2\alpha_{j, n}$, which is not a root. Hence we take $e_{\gamma}$ to be $e_{\alpha_{j,n}}^2$. It remains to observe that
\begin{align*}
e_{\wt\alpha_{\sigma(j)-1,\overline{\sigma(j)}}} f^k_{\wt\alpha_{1,\overline{\sigma(j)}}} &= k f^{k-1}_{\wt\alpha_{1,\overline{\sigma(j)}}}f_{1,\sigma(j) - 2}\\
    &= k f^{k-1}_{\wt\alpha_{1,\overline{\sigma(j)}}}f_{1,\sigma(j-1)}
\end{align*}
and
\begin{align*}
    e_{\alpha_{j,n}}^2 f^k_{\alpha_{1,\overline{j}}} &= k e_{\alpha_{j,n}} f^{k-1}_{\alpha_{1,\overline{j}}}f_{1,n}\\
    &= k f^{k-1}_{\alpha_{1,\overline{j}}}f_{1,j-1} + k(k-1) f^{k-2}_{\alpha_{1,\overline{j}}} f^2_{1, n},
\end{align*}
where we have suppressed the structure constants, since we are only dealing with monomials so that scaling is not an issue. A simple calculation of the degrees using the (PA) inequalities shows that the second term here has lower degree, so it must vanish in the associated graded module. More precisely, as we act by all other $e_{\gamma}$, it will always give rise to terms of degree lower than the rest, so we may safely ignore this term in the associated graded module. From the way we chose our order, this is the first $e_\gamma$ we act by, and the rest of the argument is exactly the same as in type $\tt C$.

\subsection{Type \texorpdfstring{$\tt D$}{D}}
$\g$ is of type $\tt D_n$, so $\wt\g$ is of type $\tt D_{n+t}$. If $\wt\beta = \wt\alpha_{i,j}$ for $j\le n+t$, then we reduce to the case of type $\tt A$. The case $\wt\beta = \wt\alpha_{i,\overline{n+t}}$ is also very similar to the type $\tt A$ case, and the exact same arguments work.\\

So we assume $\wt\beta = \wt\alpha_{1,\overline{\sigma(j)}}$. Thus $\sigma_\bd^{-1}(f^k_{\wt\alpha_{1,\overline{\sigma(j)}}}) = f^k_{\alpha_{1,\overline j}}$. The condition $\wt\gamma_i\in S_{\wt\beta}$ then implies that each $\wt\gamma_i$ is of the form $\wt\alpha_{1,\sigma(l)+1}$, $\wt\alpha_{\sigma(j),\sigma(l)+1}$ (for $l\ge j$), $\wt\alpha_{\sigma(j),\overline{\sigma(l)+1}}$ (for some $l\ge j$) or $\wt\alpha_{\sigma(l)+1, \overline{\sigma(j)}}$ (for $l< j$) for some $l\in\bc$. We rearrange $e_{\wt\gamma_i}$ according to the order

\begin{multline*}
\wt\alpha_1\prec\cdots \prec \wt\alpha_{n+t} \prec \wt\alpha_{\sigma(j),\sigma(j)+1}\prec\cdots \prec\wt\alpha_{\sigma(j), n+t}\prec\\ \wt\alpha_{\sigma(j), \overline{\sigma(j)+1}}\prec\cdots \prec \wt\alpha_{\sigma(j),\overline{n+t}}\prec\wt\alpha_{1,\overline{\sigma(j)}}\prec\cdots \prec \wt\alpha_{\sigma(j)-1,\overline{\sigma(j)}}.   
\end{multline*}

After picking this order, one proceed just as in the type $\tt B$ case to see the result.

Similarly to the $\tt B$ case, if $\sigma(j)-1\notin \im\sigma$ (equivalently, if $j-1\in \bc$), then $\pi_\bc(\wt\alpha_{\sigma(j)-1,\overline{\sigma(j)}}) = \alpha_{j, n} + \alpha_{j,n-2}$, which is not a root. Hence we take $e_{\gamma}$ to be $e_{\alpha_{j,n}}e_{\alpha_{j,n-2}}$. It remains to observe that
\begin{align*}
    e_{\wt\alpha_{\sigma(j)-1,\overline{\sigma(j)}}} f^k_{\wt\alpha_{1,\overline{\sigma(j)}}} &= k f^{k-1}_{\wt\alpha_{1,\overline{\sigma(j)}}}f_{1,\sigma(j) - 2}\\
    &= k f^{k-1}_{\wt\alpha_{1,\overline{\sigma(j)}}}f_{1,\sigma(j-1)},
\end{align*}
and
\begin{align*}
    e_{\alpha_{j,n}}e_{\alpha_{j,n-2}} f^k_{\alpha_{1,\overline{j}}} &= k e_{\alpha_{j,n}} f^{k-1}_{\alpha_{1,\overline{j}}}f_{1,n}\\
    &= k f^{k-1}_{\alpha_{1,\overline{j}}}f_{1,j-1} + k(k-1) f^{k-2}_{\alpha_{1,\overline{j}}} f_{1, n} f_{1,n-2},
\end{align*}

where we have once again suppressed the structure constants. A simple calculation of the degrees using (PA) inequalities shows that the second term here has lower degree, so we may safely ignore this term when we take the associated graded module. From the way we chose our order, this is the first $e_\gamma$ we act by, and the rest of the argument is exactly the same as in type $\tt B$.

\section{Dimension counting of Demazure modules}\label{Sec:Fund}

We keep notations as in the previous sections. In this section we prove Proposition \ref{Prop:Fundamental}. According to Proposition \ref{Prop:Ideal}, there exists a surjective map 
$$\vp_{\varpi_k}: \wt{V}_{w_\bd}(\Psi(\varpi_k))\to V^\bd(\varpi_k).$$
We will show that for any $1\leq k\leq n$, 
\begin{equation}\label{Eq:Fund}
\dim \wt{V}_{w_\bd}(\Psi(\varpi_k))\leq\dim V^\bd(\varpi_k).
\end{equation}
Two different methods will be used to establish this inequality, depending on the type of the Lie algebra. For types $\tt A_n$ and $\tt C_n$ we use the fact that $w_\bd$ are triangular elements; and for the orthogonal types $\tt B_n$ and $\tt D_n$, we study in detail the Demazure modules for fundamental weights.

\subsection{Triangular elements: type \texorpdfstring{$\tt A_n$}{A} and \texorpdfstring{$\tt C_n$}{C}}

Triangular elements in the Weyl group of the Lie algebra of type $\tt A_n$ and $\tt C_n$ are introduced in \cite{Fou16,BFK}. For such an element $w$, a basis of the Demazure module $V_w(\lambda)$ is parametrised by the lattice points in a lattice polytope.

Let $\g=\mathfrak{sl}_{n+1}$ be the Lie algebra of type $\tt A_{n}$. For two positive roots $\alpha_{i,j},\alpha_{k,\ell}$,
\begin{enumerate}
\item their \emph{join} is defined to be $\alpha_{i,j}\vee\alpha_{k,\ell}:=\alpha_{\min(i,k),\max(j,\ell)}$;
\item if the intersection of their supports is non-empty, their \emph{meet} is defined to be
$$\alpha_{i,j}\wedge\alpha_{k,\ell}:=\alpha_{\max(i,k),\min(j,\ell)}.$$
Otherwise we say that their meet does not exist.
\end{enumerate}
A subset $A\subset \Phi^+$ is called \emph{triangular}, if 
\begin{enumerate}
\item for any $\alpha,\beta\in A$ such that $\mathrm{supp}(\alpha)\cup\mathrm{supp}(\beta)$ is a consecutive subset of $[n]$, $\alpha\vee\beta\in A$ holds;
\item if the meet $\alpha\wedge \beta$ exists, then it is in $A$.
\end{enumerate} 
An element $w\in W(\g)$ is called \emph{triangular} if $\Phi^+_w$ is a triangular subset.

\begin{lemma}
The element $w_\bc\in W(\wt{\g})$ is triangular.
\end{lemma}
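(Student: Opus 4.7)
My plan is to invoke Proposition~\ref{Prop:WeylGroup} to reduce the statement to showing that $\wt\Phi^+_\bc$ is a triangular subset of $\wt\Phi^+$, rather than working with $\wt\Phi^+_{w_\bc}$ directly. This reduction is convenient because $\wt\Phi^+_\bc$ admits a very concrete description: by Remark~\ref{Rem:Epsilon}(1), in type $\tt A$,
\[
\wt\Phi^+_\bc=\{\wt\alpha_{\sigma(i),\sigma(j)}\mid 1\le i\le j\le n\},
\]
i.e.\ the set of positive roots of $\wt\g$ whose support is an interval in $[n+t]$ with both endpoints in the image $\Sigma=\sigma([n])$ of $\sigma$.

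From this description, both triangularity axioms follow almost mechanically from the monotonicity of $\sigma$. For the meet axiom, if $\alpha=\wt\alpha_{\sigma(i),\sigma(j)}$ and $\beta=\wt\alpha_{\sigma(k),\sigma(l)}$ are in $\wt\Phi^+_\bc$ and their meet exists, then since $\sigma$ is strictly increasing,
\[
\alpha\wedge\beta=\wt\alpha_{\max(\sigma(i),\sigma(k)),\,\min(\sigma(j),\sigma(l))}=\wt\alpha_{\sigma(\max(i,k)),\,\sigma(\min(j,l))},
\]
which has both endpoints in $\Sigma$ and hence lies in $\wt\Phi^+_\bc$. For the join axiom, assume without loss of generality $\sigma(i)\le\sigma(k)$. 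Then $\mathrm{supp}(\alpha)\cup\mathrm{supp}(\beta)$ being consecutive in $[n+t]$ forces $\sigma(k)\le\sigma(j)+1$, and the join becomes $\wt\alpha_{\sigma(i),\,\max(\sigma(j),\sigma(l))}=\wt\alpha_{\sigma(\min(i,k)),\,\sigma(\max(j,l))}$, once again in $\wt\Phi^+_\bc$.

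The one point that deserves slight care is the boundary case in the join where $\sigma(k)=\sigma(j)+1$ with $k>j$. Here one must observe that this case actually forces $j\notin\bc$, because otherwise $\sigma(j+1)=\sigma(j)+2$ and no index $k$ with $\sigma(k)=\sigma(j)+1$ exists in $\Sigma$ at all; consequently $\sigma(j)+1=\sigma(j+1)$ does lie in $\Sigma$, so the upper endpoint of the join is of the required form $\sigma(\cdot)$. Beyond this trivial bookkeeping, I do not anticipate any real obstacle: triangularity of $w_\bc$ is in essence just the statement that $\psi_\bc$ isotonically embeds intervals of $[n]$ into intervals of $[n+t]$ whose endpoints land in $\Sigma$, and the join/meet operations respect this image.
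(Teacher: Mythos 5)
Your proof is correct and takes essentially the same approach as the paper: both reduce via Proposition~\ref{Prop:WeylGroup} to showing that $\wt\Phi^+_\bc$ is a triangular subset, and then exploit the explicit formula $\psi_\bc(\alpha_{i,j})=\wt\alpha_{\sigma(i),\sigma(j)}$ together with the monotonicity of $\sigma$ (the paper phrases this more tersely as ``$\psi_\bc$ preserves the join and the meet operations, and $\Phi^+$ itself is triangular''). One small remark: your ``boundary case'' digression is superfluous, since the upper endpoint of the join is $\max(\sigma(j),\sigma(l))=\sigma(\max(j,l))\in\Sigma$ automatically, independent of whether $\sigma(k)=\sigma(j)+1$, so no separate bookkeeping is required there.
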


\begin{proof}
According to Proposition \ref{Prop:WeylGroup}, $\wt{\Phi}_{w_\bc}^+=\wt{\Phi}_\bc^+$. It suffices to show that the image of $\psi_\bc$ is a triangular subset in $\wt{\Phi}^+$. This holds because by definition, $\psi_\bc$ preserves the join and the meet operations, and $\Phi^+$ itself is a triangular subset. 
\end{proof}

Let $\g=\mathfrak{sp}_{2n}$ be the Lie algebra of type $\tt C_n$. We embed the Weyl group $W(\g)$ into $W(\mathfrak{sl}_{2n})$, looked as Coxeter groups, by sending the generator $s_i$ to $t_it_{2n-i}$ where $t_1,\ldots,t_{2n-1}$ is the set of generators of $W(\mathfrak{sl}_{2n})$. An element $w\in W(\g)$ is called \emph{triangular} if its image under the above embedding in $W(\mathfrak{sl}_{2n})$ is a triangular element. 

\begin{corollary}
The element $w_\bc\in W(\wt{\g})$ is triangular.
\end{corollary}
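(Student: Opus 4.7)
The strategy mirrors the type $\tt A$ proof, transported via the folding embedding. Set $m := n+t$ and let $\iota : W(\wt\g) = W(\tt C_m) \hookrightarrow W(\tt A_{2m-1})$ be the embedding from the definition of triangularity, sending $s_i \mapsto t_i t_{2m-i}$ for $1 \le i < m$ and $s_m \mapsto t_m$. By definition, I must show that the type $\tt A$ inversion set $\Phi^+_{\iota(w_\bc)} \subseteq \Phi^+(\tt A_{2m-1})$ is triangular.

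First, I would identify $\Phi^+_{\iota(w_\bc)}$ explicitly. The folding correspondence sends each type $\tt C$ root in $\wt\Phi^+_\bc$, which equals $\wt\Phi^+_{w_\bc}$ by Proposition \ref{Prop:WeylGroup}, to its orbit of type $\tt A$ roots under the involution $i \leftrightarrow 2m+1-i$: a short root $\ve_i - \ve_j$ unfolds to the pair $\{\ve_i-\ve_j,\ \ve_{2m+1-j}-\ve_{2m+1-i}\}$, a root $\ve_i+\ve_j$ unfolds to $\{\ve_i-\ve_{2m+1-j},\ \ve_j-\ve_{2m+1-i}\}$, and a long root $2\ve_i$ unfolds to the single self-symmetric root $\ve_i-\ve_{2m+1-i}$. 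Using the reduced expression for $w_\bc$ from Section \ref{Sec:RDBCn} (whose reducedness is Corollary \ref{Cor:Reduced}), a length count plus a direct application of the simple reflections shows that $\Phi^+_{\iota(w_\bc)}$ agrees with this unfolded set.

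Next, I would verify triangularity. Following the type $\tt A$ case, the plan is to exhibit an unfolded lifted map $\widehat\psi_\bc : \Phi^+(\tt A_{2n-1}) \to \Phi^+(\tt A_{2m-1})$ extending $\psi_\bc$ through the folding, whose image is precisely $\Phi^+_{\iota(w_\bc)}$. Coordinate-wise formulae analogous to Remark \ref{Rem:Epsilon} show that $\widehat\psi_\bc$ preserves the join operation, and preserves the meet operation whenever it is defined. Since $\Phi^+(\tt A_{2n-1})$ is trivially triangular as the full positive system, its image is triangular in $\Phi^+(\tt A_{2m-1})$, which concludes the proof.

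The principal obstacle will be the long roots $2\ve_i$, which unfold to self-symmetric type $\tt A$ roots and interact asymmetrically under join and meet — a join or meet involving such a root may produce either a symmetric pair or another self-symmetric root. A careful case analysis (paralleling how the longest element $w_{0,t}$ in the reduced decomposition of $w_\bc$ absorbs the extra long-root geometry in Section \ref{Sec:RDBCn}) should handle this, after which the argument formally imitates the type $\tt A$ proof.
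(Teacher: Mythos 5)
Your proposal takes the right general approach -- unfold through the embedding $\iota:W(\wt\g)\hookrightarrow W(\mathfrak{sl}_{2(n+t)})$ and reduce to type $\tt A$ triangularity -- but it misses the observation that makes the paper's proof essentially a one-liner, and as a result you end up proposing to redo work that was already done. The paper's proof notes that the image $\iota(w_\bc)$ is not just \emph{some} element with an unfolded inversion set: it is exactly the type $\tt A$ element $w_{\bc'}$ of the family already treated by the preceding lemma, where $\bc' := \{c_1,\ldots,c_t,\,2n-1-c_t,\ldots,2n-1-c_1\}$ is the symmetrised index set. Once you see $\iota(w_\bc) = w_{\bc'}$, triangularity is immediate from the previous lemma, because the inversion set of $w_{\bc'}$ is $\psi_{\bc'}(\Phi^+(\tt A_{2n-1}))$ and $\psi_{\bc'}$ preserves join and meet. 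In particular, your flagged ``principal obstacle'' -- the self-symmetric unfolded images $\ve_i - \ve_{2(n+t)+1-i}$ of long roots $2\ve_i$ -- is not an obstacle at all: those are ordinary roots in the image of $\psi_{\bc'}$, and no extra case analysis is required beyond what the type $\tt A$ lemma already provides. The map $\widehat\psi_\bc$ you intend to build is precisely $\psi_{\bc'}$ in disguise, and your remark that it ``preserves join and meet by coordinate-wise formulae analogous to Remark~\ref{Rem:Epsilon}'' is correct precisely because it \emph{is} such a $\psi$-map for a type $\tt A$ datum. I would therefore not treat the long-root discussion as a gap to be filled by a ``careful case analysis'' deferred to later; instead, prove the single identity $\iota(w_\bc) = w_{\bc'}$ (by pushing the reduced word of Lemma~\ref{Lem:RedDec} through $\iota$, or by comparing the unfolded inversion set with $\wt\Phi^+_{\bc'}$), after which the corollary follows formally.
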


\begin{proof}
Note that $w_\bc\in W(\mathfrak{sp}_{2(n+t)})$. We let $w$ be its image in $W(\mathfrak{sl}_{2(n+t)})$ under the above embedding. If $\bc=\{c_1,\ldots,c_t\}$ we denote 
$$\bc':=\{c_1,\ldots,c_t,2n-1-c_t,\ldots,2n-1-c_1\}\subseteq[2n-1].$$ 
The corollary follows from the fact that $w=w_{\bc'}$, where $w_{\bc'}$ is the element in $W(\mathfrak{sl}_{2(n+t)})$ associated to $\bc'$.
\end{proof}

The set $\wt{\Phi}^+$ of positive roots admits a poset structure by requiring: $\wt{\alpha}\succ \wt{\beta}$ if $\wt{\alpha}-\wt{\beta}\in\wt{\Phi}^+$. The advantage of having triangular elements is:

\begin{proposition}[{\cite[Theorem 19]{BFK}}]\label{Prop:Triangular}
For any $\lambda\in\Lambda^+$, a linear basis of $\wt{V}_{w_\bc}(\Psi(\lambda))$ is parametrised by the marked chain polytope of the poset $\wt{\Phi}_\bc^+$ with the poset structure induced from that of $\wt{\Phi}^+$ and the marking is given by $\Psi(\lambda)$.
\end{proposition}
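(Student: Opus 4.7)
The plan is to deduce Proposition~\ref{Prop:Triangular} as an essentially immediate application of \cite[Theorem 19]{BFK}. That theorem asserts that for a triangular element $w$ in the Weyl group of type $\tt A_m$ (and, via the embedding $W(\mathfrak{sp}_{2m})\hookrightarrow W(\mathfrak{sl}_{2m})$, of type $\tt C_m$) and any dominant integral weight $\mu$, the Demazure module $V_w(\mu)$ admits a basis parametrised by the lattice points of the marked chain polytope built from the subposet $\Phi^+_w$ of $\Phi^+$, with the marking determined by $\mu$. The entire task therefore reduces to verifying that the hypotheses of the cited theorem match the data $(w_\bc, \Psi(\lambda))$ of the proposition.

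I would organise the verification in three steps. First, recall the construction of the marked chain polytope of a subposet of $\wt{\Phi}^+$: the coordinates are indexed by roots in the subposet, and the defining inequalities involve saturated chains in the poset with boundary values dictated by the marking. Second, invoke the preceding lemma (type $\tt A$) and corollary (type $\tt C$) to know that $w_\bc$ is triangular, and invoke Proposition~\ref{Prop:WeylGroup} to identify $\wt{\Phi}^+_{w_\bc}$ with $\wt{\Phi}^+_\bc$, so that the subposet appearing in the BFK construction is exactly $\wt{\Phi}^+_\bc$ with the induced order. Third, match the marking: the BFK theorem marks the poset by the values $(\mu,\wt{\beta}^\vee)$ for certain extremal $\wt{\beta}$, so substituting $\mu = \Psi(\lambda)$ gives exactly the marking prescribed in the statement.

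The main obstacle I expect is in the type $\tt C_n$ case, where BFK is formulated after embedding the symplectic Weyl group into a larger type $\tt A$ Weyl group. One must verify that the symmetric extension $\bc \mapsto \bc' = \{c_1,\dots,c_t,2n-1-c_t,\dots,2n-1-c_1\}$ used in the corollary on triangularity is compatible with the symplectic poset structure on $\wt{\Phi}^+_\bc$, and that the marking by $\Psi(\lambda)$ extends in the correct symmetric fashion to give the same marked chain polytope the BFK statement produces. Once this bookkeeping between conventions is written down, no further work is needed: the construction of the basis and the dimension count follow from \cite[Theorem 19]{BFK} without modification.
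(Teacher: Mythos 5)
Your proposal is correct and takes essentially the same approach as the paper: the proposition is stated as a direct application of \cite[Theorem 19]{BFK}, and the paper's immediately preceding lemma (type $\tt A$) and corollary (type $\tt C$, including the symmetric extension $\bc\mapsto\bc'$) together with Proposition~\ref{Prop:WeylGroup} supply exactly the triangularity and poset-identification checks you describe. No further argument is given or needed in the paper beyond this verification of hypotheses.
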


\begin{corollary}
We have $\dim \wt{V}_{w_\bc}(\Psi(\lambda))=\dim V(\lambda)=\dim V^\bd(\lambda)$.
\end{corollary}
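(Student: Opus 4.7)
The equality $\dim V(\lambda)=\dim V^{\bd}(\lambda)$ is automatic: $V^{\bd}(\lambda)$ is by construction the associated graded vector space of $V(\lambda)$ with respect to the $\mathbb{R}$-filtration defined in Section~\ref{sec:abelian}, so they have the same dimension. The content of the corollary is therefore the equality $\dim \wt{V}_{w_{\bc}}(\Psi(\lambda))=\dim V(\lambda)$, for which the plan is to apply Proposition~\ref{Prop:Triangular} and then compare with the FFLV-type result for $\g$.

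The first step is to regard $\psi_{\bc}:\Phi^+\to \wt{\Phi}_{\bc}^+$ as a bijection of posets, where both sides carry the partial order inherited from $\wt{\Phi}^+$ (equivalently from $\Phi^+$). This should follow directly from the explicit formulae in Remark~\ref{Rem:Epsilon}: differences of image roots correspond to differences of preimage roots, so $\psi_{\bc}(\alpha)\succeq \psi_{\bc}(\beta)$ iff $\alpha\succeq \beta$. Similarly, the join and meet operations used in the definition of triangularity are preserved by $\psi_{\bc}$, which is the substance of the preceding lemma and corollary. Thus the two posets are isomorphic, and so are their marked chain polytopes once the marking is matched.

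The second step is to match the markings. The marking on $\wt{\Phi}_{\bc}^+$ provided by Proposition~\ref{Prop:Triangular} attaches the value $(\Psi(\lambda),\wt{\varpi}_i^\vee)$-type data to each maximal element of the poset (i.e.\ to the extremal positive roots in $\wt{\Phi}_{\bc}^+$). By Proposition~\ref{Prop:Psi}, $(\Psi(\lambda),\psi_{\bc}(\beta)^\vee)=(\lambda,\beta^\vee)$ for every $\beta\in\Phi^+$, and the missing simple coroots in $[n+t]\setminus \Sigma$ are precisely those that do not contribute to $\wt{\Phi}_{\bc}^+$. Hence the marking induced on $\Phi^+$ via $\psi_{\bc}^{-1}$ agrees with the marking by $\lambda$, and so the two marked chain polytopes have the same set of lattice points.

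Finally, in types $\tt A_n$ and $\tt C_n$ the marked chain polytope of $\Phi^+$ with marking $\lambda$ is exactly the FFLV polytope of \cite{FFL11a,FFL11b}, whose number of lattice points equals $\dim V(\lambda)$. Combining with Proposition~\ref{Prop:Triangular} yields
\[
\dim \wt{V}_{w_{\bc}}(\Psi(\lambda)) \;=\; \#\{\text{lattice points of the marked chain polytope}\} \;=\; \dim V(\lambda),
\]
and together with the first paragraph this gives the claimed chain of equalities. The only subtle point that I expect to require care is the marking comparison: one has to check that the ``extremal'' roots of $\wt{\Phi}_{\bc}^+$ whose markings appear in the definition of the marked chain polytope in Proposition~\ref{Prop:Triangular} correspond, via $\psi_{\bc}^{-1}$, to the extremal roots of $\Phi^+$ used in the FFLV polytope of \cite{BFK}, with matching integer values. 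Once this compatibility is in place the corollary follows immediately.
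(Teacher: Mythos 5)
Your proposal is correct and follows essentially the same route as the paper's proof: note that $\psi_\bc$ is an isomorphism of marked posets (poset structure preserved by the formulae in Remark~\ref{Rem:Epsilon}, markings matched by Proposition~\ref{Prop:Psi}), then invoke the FFLV-type lattice-point parametrisation of $\dim V(\lambda)$ from \cite{FFL11a,FFL11b}. The only minor difference is that the paper also cites \cite{ABS} alongside \cite{FFL11a,FFL11b} to justify passing between marked poset polytopes of different flavours; you single out the marked chain polytope as being directly the FFLV polytope, which is acceptable but worth stating with this Ehrhart-equivalence caveat in mind.
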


\begin{proof}
It follows from the definition that the partial order $\succ$ on the positive roots is preserved by $\psi_\bc$. Therefore Proposition \ref{Prop:Psi} implies that the marked poset in Proposition \ref{Prop:Triangular} coincides with the marked poset on $\Phi^+$ with the marking given by $\lambda$. The corollary is then a consequence of the main results of \cite{FFL11a,FFL11b,ABS}, showing that the lattice points of such a marked poset polytope parametrise a basis of $V(\lambda)$.
\end{proof}

\subsection{Type \texorpdfstring{$\tt A_n$}{A}}
We give another proof of \eqref{Eq:Fund} for type $\tt A_n$. The ideas of the proofs of \eqref{Eq:Fund} in the orthogonal (non-spin) cases are similar and will partially rely on the combinatorics in type $\tt A_n$.

We fix the standard basis $\be_1,\ldots,\be_r$ of $\mathbb{C}^r$. The fundamental representation $V(\varpi_k)$ of $\mathfrak{sl}_{n+1}$ can be realised as the wedge product $\Lambda^k\mathbb{C}^{n+1}$ of dimension $\binom{n+1}{k}$ with $\be_1\wedge\ldots\wedge \be_k$ as a highest weight vector. Again we will write $\ell_i:=\sigma(i)$ for short. Note that $\ell_i-i$ is the number of $c_k$ in $\bc$ with $1\leq c_k<i$.

\begin{lemma}\label{Lem:TypeAExt}
For $1\leq i\leq n$, up to a sign, the extremal weight vector $v_{w_\bc(\varpi_{\ell_i})}$ in $\wt{V}_{w_\bc}(\varpi_{\ell_i})$ is given by:
$$v_{w_\bc(\varpi_{\ell_i})}=\be_1\wedge\ldots\wedge \be_{\ell_i-i}\wedge \be_{n+2+\ell_i-2i}\wedge\ldots\wedge \be_{n+1+\ell_i-i}.$$
\end{lemma}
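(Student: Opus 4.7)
The plan is to identify $\wt V(\wt\varpi_{\ell_i})$ with the exterior power $\Lambda^{\ell_i}\C^{n+t+1}$ with standard basis $\be_{j_1}\wedge\cdots\wedge\be_{j_{\ell_i}}$ (for $j_1<\cdots<j_{\ell_i}$) of weight $\wt\ve_{j_1}+\cdots+\wt\ve_{j_{\ell_i}}$. Since $\Psi(\varpi_{\ell_i})=\wt\varpi_{\ell_i}$ and fundamental weight spaces are one-dimensional, the extremal weight vector of weight $w_\bc(\wt\varpi_{\ell_i})$ equals $\be_{w_\bc(1)}\wedge\cdots\wedge\be_{w_\bc(\ell_i)}$ up to a sign from reordering into increasing index. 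Hence the lemma reduces to the set equality
\[
\{w_\bc(1),\ldots,w_\bc(\ell_i)\}
=\{1,\ldots,\ell_i-i\}\sqcup\{n+2+\ell_i-2i,\ldots,n+1+\ell_i-i\}.
\]

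Next I would split $[\ell_i]=\sigma([i])\sqcup M_i$, where $M_i=\{c_k+k:1\le k\le \ell_i-i\}$ collects the indices up to $\ell_i$ that are missed by $\sigma$. A direct calculation from the definition of $\sigma$ shows that $c_k+k=\ell_{c_k+1}-1$, so $M_i=\{\ell_j-1:j\in[i],\ j-1\in\bc\}$. For such $j$, case (2) of Lemma~\ref{Lem:wcTypeA} gives $w_\bc(\ell_j-1)=\ell_j-j$; as $j$ ranges over $\{c_k+1:1\le k\le\ell_i-i\}$, these values run through $\{1,2,\ldots,\ell_i-i\}$, which produces the first block of the claimed set.

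For the second block I would proceed by induction on $i$, proving
\[
\{w_\bc(\ell_j):1\le j\le i\}=\{n+2+\ell_i-2i,\ldots,n+1+\ell_i-i\}.
\]
The base case $i=1$ is immediate from case (1) of Lemma~\ref{Lem:wcTypeA}. For the step from $i$ to $i+1$, set $k_i:=\ell_i-i$. If $i\notin\bc$, then $\ell_{i+1}=\ell_i+1$, $k_{i+1}=k_i$, and case (1) yields $w_\bc(\ell_{i+1})=n+1+\ell_i-2i$, which extends the previous interval on the left to $\{n+1+\ell_{i+1}-2(i+1),\ldots,n+1+\ell_{i+1}-(i+1)\}$. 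If $i\in\bc$, then $\ell_{i+1}=\ell_i+2$, $k_{i+1}=k_i+1$, and case (2) yields $w_\bc(\ell_{i+1})=n+2+\ell_i-i$, which extends the previous interval on the right to the required $\{n+2+\ell_{i+1}-2(i+1),\ldots,n+1+\ell_{i+1}-(i+1)\}$. In both cases the endpoints match exactly.

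The main obstacle is the bookkeeping in the inductive step: one must verify in each case that the newly produced value is the unique missing endpoint of the previous interval, which relies crucially on the precise offsets $n-2j+2$ vs $n-j+1$ appearing in Lemma~\ref{Lem:wcTypeA}. Since $w_\bc$ is a bijection, the two contributions above are automatically disjoint, so together they yield the set equality, and the lemma follows.
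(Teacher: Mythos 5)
Your proof is correct and follows the same strategy as the paper: identify $w_\bc(\{1,\ldots,\ell_i\})$ by applying Lemma~\ref{Lem:wcTypeA} to the missing indices (case~(2), first statement) and to the $\ell_j$'s (cases~(1) and~(2), second statements). You fill in the bookkeeping that the paper leaves implicit — in particular the identification $M_i=\{\ell_j-1:j\le i,\ j-1\in\bc\}$ and the interval-growing induction — which makes the argument more transparent than the paper's terse assertion, but the underlying mechanism is identical.
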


\begin{proof}
We compute $w_\bc(\{1,2,\ldots,\ell_i\})$. First notice that 
$$\#(\{1,2,\ldots,\ell_i\}\cap \bc)=\ell_i-i.$$
Therefore from Lemma \ref{Lem:wcTypeA} (2), the first statement, $1,\ldots,\ell_i-i$ are in the image. The remaining $s$ elements are determined by Lemma \ref{Lem:wcTypeA} (1) and (2), the second statement: they are $n+2,\ldots,n+1+\ell_i-i$ and $n+1,n,\ldots,n+2+\ell_i-2i$.
\end{proof}

To establish \eqref{Eq:Fund}, notice that the Demazure module 
$$\wt{V}_{w_\bd}(\varpi_{\ell_i})=U(\wt{\n}_+)\cdot v_{w_\bc(\varpi_{\ell_i})}.$$
The action of $U(\wt{\n}_+)$ sends some $\be_p$ to some $\be_q$ with $1\leq q<p$, hence sends a pure wedge product to a pure wedge product. Therefore 
$$\wt{V}_{w_\bd}(\varpi_{\ell_i})\subseteq \mathrm{span}_{\mathbb{C}}\{\be_1\wedge\ldots\wedge \be_{\ell_i-i}\wedge \be_{p_1}\wedge\ldots\wedge\be_{p_i}\mid \ell_i-i+1\leq p_1<\ldots<p_i\leq n+1+\ell_i-i\}.$$
The dimension of the vector space on the right hand side is $\binom{n}{k}$ since the freedoms are the choices of a subset $\{p_1,\ldots,p_i\}$ from the set of $n$ elements $\{\ell_i-i+1,\ldots,n+\ell_i-i+1\}$. This is exactly \eqref{Eq:Fund}.

\subsection{Type \texorpdfstring{$\tt B_n$}{B}: non-spin representations}

For the Lie algebra $\mathfrak{so}_{2n+1}$ of type $\tt B_n$, the non-spin fundamental representations are exterior powers of the vector representation: for $1\leq k\leq n-1$, $V(\varpi_k)=\Lambda^k V(\varpi_1)$ and $V(\varpi_1)\cong\mathbb{C}^{2n+1}$ is the vector representation. We fix a basis $\be_1,\ldots,\be_n,\be_0,\be_{-n},\ldots,\be_{-1}$ of $\mathbb{C}^{2n+1}$ where $\be_{-k}:=\be_{2n+1-k}$ and $\be_0:=\be_{2n+1}$. It carries the following action of $e_{i,j}:=e_{\alpha_{i,j}}$ and $e_{i,\overline{j}}:=e_{\alpha_{i,\overline{j}}}$: up to a non-zero scalar, 
\begin{enumerate}
\item for $1\leq i\leq j\leq n-1$, $e_{i,j}$ sends $\be_{j+1}$ to $\be_i$ and sends $\be_{-i}$ to $\be_{-(j+1)}$;
\item for $1\leq i\leq j\leq n-1$, $e_{i,\overline{j}}$ sends $\be_{-j}$ to $\be_i$ and sends $\be_{-i}$ to $\be_{j}$;
\item for $1\leq i\leq n$, $e_{i,n}$ sends $\be_{0}$ to $\be_i$ and $\be_{-i}$ to $\be_0$.
\end{enumerate}
For $1\leq i\leq n$, the weight of $\be_i$ (resp. $\be_{-i}$) is $\ve_i$ (resp. $-\ve_i$), and the weight of $\be_0$ is $0$.

We will use the following total order on the indices: 
$$1<2<\ldots<n<0<-n<-(n-1)<\ldots<-1.$$
It follows from the description above that the action of $e_{i,j}$, $e_{i,\overline{j}}$ and $e_{i,n}$ decrease the index with respect to this total order when acting on some basis element $\be_i$.

Up to a non-zero scalar, the Weyl group acts on the chosen basis of $V(\varpi_1)$ in the same way as it acts on their weights.

\begin{lemma}\label{Lem:TypeBExt}
For $1\leq i\leq n-1$, up to a non-zero scalar, the extremal weight vector $v_{w_\bc(\varpi_{\ell_i})}$ in $\wt{V}_{w_\bc}(\varpi_{\ell_i})$ is given by:
$$v_{w_\bc(\varpi_{\ell_i})}=\be_1\wedge\ldots\wedge \be_{\ell_i-i}\wedge \be_{-(2t+i+1-\ell_i)}\wedge\ldots\wedge \be_{-(2t+2i-\ell_i)}.$$
\end{lemma}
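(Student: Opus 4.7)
The strategy is to compute the signed multiset $w_\bc(\{1,2,\ldots,\ell_i\})$, where a signed index $k$ encodes whether $\ve_k$ maps to $\ve_k$ (positive) or $-\ve_k$ (negative). Since the Weyl group acts on the basis $\{\be_k\}$ exactly as on their weights up to a nonzero scalar and therefore sends pure wedges to pure wedges, the extremal weight vector $v_{w_\bc(\varpi_{\ell_i})}$ is, up to a nonzero scalar, the wedge of the basis vectors indexed by this signed set. To carry out the computation I will use the reduced decomposition $w_\bc = F \cdot u$ from Corollary \ref{Cor:Reduced}, with $F := s_{n+t}(s_{n+t-1}s_{n+t})\cdots(s_{t+1}\cdots s_{n+t})$ and $u := v_t\cdots v_1$. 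Recall from the proof of Proposition \ref{Prop:WeylGroup} that $F$ fixes $\ve_k$ for $k\in[t]$ and sends $\ve_{t+k}\mapsto -\ve_{n+t-k+1}$ for $k\in[n]$; equivalently, $F(m) = -(n+2t+1-m)$ for $m\in\{t+1,\ldots,n+t\}$.

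The first main step is to establish the set-level identity
$$u(\{1,\ldots,\ell_i\}) = \{1,\ldots,\ell_i - i\} \cup \{n+1+\ell_i-2i,\ldots,n+\ell_i-i\}.$$
The proof of Proposition \ref{Prop:WeylGroup} records that $u$ acts on each $\ve_{\ell_j}$ exactly by the formula of Lemma \ref{Lem:wcTypeA} with $n$ replaced by $n-1$. Since the proof of Lemma \ref{Lem:TypeAExt} used only the individual-index formula of Lemma \ref{Lem:wcTypeA}, repeating that argument verbatim with $n$ replaced by $n-1$ produces the displayed identity.

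The second main step is to apply $F$. The first subset $\{1,\ldots,\ell_i-i\}$ lies in $\{1,\ldots,t\}$, since $\ell_i - i = \#\{k : c_k < i\} \leq t$, so $F$ fixes it pointwise. For the second subset, the inequalities $n+1+\ell_i-2i \geq t+1$ (equivalent to $\ell_i - i \geq i+t-n$, which follows from $\#\{k : c_k \geq i\} \leq n-i$ given $\bc \subseteq [n-1]$) and $n+\ell_i - i \leq n+t$ (equivalent to $\ell_i - i \leq t$) guarantee that the second subset lies in $\{t+1,\ldots,n+t\}$, where $F$ acts via $F(m) = -(n+2t+1-m)$. A direct substitution gives
$$F(\{n+1+\ell_i-2i,\ldots,n+\ell_i-i\}) = \{-(2t+i+1-\ell_i),\ldots,-(2t+2i-\ell_i)\}.$$
Combining the two pieces and reordering the resulting pure wedge (at worst introducing a sign) yields the formula in the lemma. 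The main technical point is the first step, namely transferring the set-theoretic statement of Lemma \ref{Lem:TypeAExt} from $w_\bc$ in type $\tt A_n$ to the element $u$ sitting inside the Weyl group of type $\tt B_{n+t}$; the remaining computations are routine substitutions and range checks.
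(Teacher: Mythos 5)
Your proposal is correct and follows essentially the same route as the paper: decompose $w_\bc = F\cdot u$ with $u = v_t\cdots v_1$ and $F = s_{n+t}(s_{n+t-1}s_{n+t})\cdots(s_{t+1}\cdots s_{n+t})$, transport the type $\tt A_{n-1}$ index computation to describe $u(\{1,\ldots,\ell_i\})$, then apply the sign-flipping action of $F$. Your write-up is slightly more careful than the paper's in that you explicitly verify the lower bound $n+1+\ell_i-2i\geq t+1$ placing the second block inside $\{t+1,\ldots,n+t\}$, a range check the paper leaves implicit after citing only $\ell_i-i<t+1$.
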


\begin{proof}
According to \eqref{Eq:BCn}, the Weyl group element $w_\bc$ has the following form:
$$w_\bc=s_{n+t}(s_{n+t-1}s_{n+t})\cdots (s_{t+1}\cdots s_{n+t})v_t\cdots v_1.$$
The same proof as in Lemma \ref{Lem:TypeAExt} shows that
$$v_t\cdots v_1(\be_1\wedge\ldots\wedge \be_{\ell_i})=\be_1\wedge\ldots\wedge\be_{\ell_i-i}\wedge \be_{n+1+\ell_i-2i}\wedge\ldots\wedge \be_{n+\ell_i-i}.$$
Since $s_{n+t}(s_{n+t-1}s_{n+t})\cdots (s_{t+1}\cdots s_{n+t})$ sends $\ve_{t+1},\ldots,\ve_{n+t}$ to $-\ve_{n+t},\ldots,-\ve_{t+1}$, it follows from $\ell_i-i<t+1$ that it sends $n+1+\ell_i-2i$, $\ldots$, $n+\ell_i-i$ to $-(2t+i+1-\ell_i)$, $\ldots$, $-(2t+2i-\ell_i)$. The proof is then complete.
\end{proof}

For the Demazure module $\wt{V}_{w_\bd}(\varpi_{\ell_i})=U(\wt{\n}_+)\cdot v_{w_\bc(\varpi_{\ell_i})}$, it follows from the definition of the action that $\wt{V}_{w_\bd}(\varpi_{\ell_i})$ is contained in 
$$\mathrm{span}_{\mathbb{C}}\{\be_1\wedge\ldots\wedge \be_{\ell_i-i}\wedge \be_{p_1}\wedge\ldots\wedge\be_{p_i}\mid \ell_i-i+1\leq p_1<\ldots<p_i\leq -(2t+2i-\ell_i)\}.$$
There are $2n+1$ elements between $\ell_i-i+1$ and $-(2t+2i-\ell_i)$ in the above total order, and hence 
$$\dim\wt{V}_{w_\bd}(\varpi_{\ell_i})\leq \binom{2n+1}{i}=\dim V(\varpi_i)=\dim V^\bd(\varpi_i).$$

\subsection{Type \texorpdfstring{$\tt D_n$}{D}: non-spin representations}
For the Lie algebra $\mathfrak{so}_{2n}$ of type $\tt D_n$, the non-spin fundamental representations are exterior powers of the vector representation: for $1\leq k\leq n-2$, $V(\varpi_k)=\Lambda^k V(\varpi_1)$ and $V(\varpi_1)\cong\mathbb{C}^{2n}$ is the vector representation. We fix a basis $\be_1,\ldots,\be_n,\be_{-n},\ldots,\be_{-1}$ of $\mathbb{C}^{2n}$ where $\be_{-k}:=\be_{2n-k}$. The total order on the indices of the basis is taken to be
$$1<2<\ldots<n<-n<-(n-1)<\ldots <-1.$$

The proof in type $\tt D_n$ is the same as in type $\tt B_n$. Instead of writing down the full proof, we only point out why it is so. First, for the extremal weight vector, Lemma \ref{Lem:TypeBExt} holds without change. In Section \ref{Sec:Dn}, the action of the Weyl group on $\ve_1,\ldots,\ve_{n+t}$ is almost the same as in the $\tt B_n$ case. The only difference is when $n$ is odd, $\ve_{t+1}$ is sent to $\ve_{n+t}$ but $-\ve_{n+t}$. However in our case, the non-spin fundamental representation $V(\varpi_i)$ satisfies $1\leq i\leq n-2$, hence after having been acted by $v_t\cdots v_1$, $t+1$ does not appear in the set $\{n+1+\ell_i-2i,\ldots,n+\ell_i-i\}$. Indeed, notice that for fixed $i$, the smallest element in the set is $n+1+\ell_i-2i$, and this number decreases with respect to $i$, hence for any $i$, the minimal number which may appear in the above set is $n+1+\ell_{n-2}-2(n-2)$. Since $\bc\subseteq [n-3]$, $\ell_{n-2}=n-2+t$, hence 
$$n+1+\ell_{n-2}-2(n-2)=n+1+n-2+t-2n+4=t+3>t+1.$$

Once this extremal weight vector is described, it suffices to notice that the action of a root vector in $\wt{\n}_+$ will decrease the index of $\be_i$ for $1\leq i\leq -1$ with respect to the above total order. Therefore the same formula on the inclusion of the Demazure module in the vector space generated by pure wedge products still holds without any change. The difference is that to count the dimension of the latter space, since there is no $\be_0$, which sits between $n$ and $-n$ in the $\tt B_n$ case, its dimension is $\binom{2n}{i}$, which coincides with the dimension of $V^\bd(\varpi_i)$.

\subsection{Spin representations}
It remains to deal with the spin representations: for type $\tt B_n$ it is $V(\varpi_n)$ and for type $\tt D_n$, they are $V(\varpi_{n-1})$ and $V(\varpi_n)$. The spin representations are minuscule, and we will adapt the idea in \cite{CLL} to deal with the spin cases. Since in type $\tt D_n$ it is assumed that $\bc\subseteq[n-3]$, it follows that $\sigma(n-1)=n+t-1$ and $\sigma(n)=n+t$.

We will prove that $\dim \wt{V}_{w_\bd}(\varpi_{n+t})=\dim V(\varpi_n)$ for type $\tt B_n$. The proof for type $\tt D_n$ is the same.

We start from the formula \eqref{Eq:BCn} of the reduced decomposition of $w_\bc$ (the reducedness holds by Corollary \ref{Cor:Reduced})
$$w_\bc=s_{n+t}(s_{n+t-1}s_{n+t})\cdots (s_{t+1}\cdots s_{n+t})v_t\cdots v_1.$$
Let $\mathfrak{l}_n$ be the semi-simple part of the Levi subalgebra of $\wt{\g}$ corresponding to the simple roots $\wt{\alpha}_{t+1},\ldots,\wt{\alpha}_{n+t}$. It carries naturally an induced triangular decomposition $\mathfrak{l}_n=\mathfrak{l}_n^+\oplus \mathfrak{l}_n^0\oplus \mathfrak{l}_n^-$ from $\wt{\g}$ where $\mathfrak{l}_n^\pm=\mathfrak{l}_n\cap\wt{\n}_\pm$ and $\mathfrak{l}_n^0=\mathfrak{l}_n\cap\wt{\mathfrak{h}}$. The Lie algebra $\mathfrak{l}_n$ is isomorphic to $\g$: such an isomorphism can be chosen to pair the simple root $\wt{\alpha}_{t+k}$ with $\alpha_k$.

It follows that in the stabiliser $W(\wt{\g})_{\varpi_{n+t}}$ of $\varpi_{n+t}$ in $W(\wt{\g})$, the image of $w_\bc$ is given by
$$\overline{w}_\bc:=s_{n+t}(s_{n+t-1}s_{n+t})\cdots (s_{t+1}\cdots s_{n+t}).$$ 
It is contained in the Weyl group $W(\mathfrak{l}_n)_{\varpi_{n+t}}$ of the Levi subalgebra modulo the stabiliser of $\varpi_{n+t}$, and moreover, it is the longest element therein. Therefore
$$\wt{V}_{w_\bd}(\wt{\varpi}_{n+t})=U(\wt{\n}_+)\cdot v_{w_\bd(\wt{\varpi}_{n+t})}=U(\wt{\n}_+)\cdot v_{\overline{w}_\bd(\wt{\varpi}_{n+t})}=U(\mathfrak{l}_n^+)\cdot v_{\overline{w}_\bd(\wt{\varpi}_{n+t})}.$$
Since $\overline{w}_\bd$ is the longest element, $U(\mathfrak{l}_n^+)\cdot v_{\overline{w}_\bd(\wt{\varpi}_{n+t})}\cong V(\varpi_n)$. The statement on the dimension then follows.

\printbibliography
\end{document}